\documentclass[11pt,reqno]{amsart}
\usepackage[a4paper,margin=27mm,headheight=14pt]{geometry}
\usepackage[T1]{fontenc}
\usepackage{lmodern,microtype}
\usepackage{amsmath,amssymb,amsthm,booktabs}
\usepackage[hidelinks,pdfencoding=auto]{hyperref}
\hypersetup{pdftitle={Boundary Rigidity of Line-Bundle Shilov--Poisson Transforms on Bounded Symmetric Domains}}
\allowdisplaybreaks[2]
\numberwithin{equation}{section}
\newtheorem{theorem}{Theorem}[section]
\newtheorem{proposition}[theorem]{Proposition}
\newtheorem{lemma}[theorem]{Lemma}
\newtheorem{remark}[theorem]{Remark}

\newtheorem{definition}[theorem]{Definition}
\newtheorem{corollary}[theorem]{Corollary}

\title[Boundary Rigidity of Line-Bundle Shilov--Poisson Transforms]{Boundary Rigidity of Line-Bundle Shilov--Poisson Transforms on Bounded Symmetric Domains}

\date{}

\author[Chen]{Ren-Yu Chen}
\address{School of Mathematics, Tianjin University, Tianjin 300350, P.R. China}
\email{chenry@tju.edu.cn}

\author[Liu]{Su Liu}
\address{School of Mathematics, Tianjin University, Tianjin 300350, P.R. China}
\email{liusu1022@tju.edu.cn}

\subjclass[2020]{Primary 32M15; Secondary 32A25, 32A50, 43A85}

\keywords{Shilov--Poisson transforms, bounded symmetric domains,
Graham type theorem, homogeneous line bundles,
covariant differential operators}

\begin{document}
\begin{abstract}
Let $\Omega=G/K$ be an irreducible bounded symmetric domain other
than the unit disk, and let $S$ be its Shilov boundary. We study the
two-parameter family of Shilov--Poisson transforms between homogeneous
line bundles over $S$ and $\Omega$.
For a transform $F$ of $L^2$ boundary data, we determine what finite
Euclidean regularity up to the full boundary $\partial\Omega$ forces
in the interior.  Outside five explicit exceptional parameter families,
sufficiently high finite regularity forces $F=0$.  In each exceptional
family, the conclusion is an interior covariant differential equation.
 In the scalar
Poisson--Szeg\H{o} case, it yields a finite-order Graham type theorem:
the stated boundary regularity forces a Bergman-harmonic function
to be pluriharmonic.
\end{abstract}

\maketitle


\section{Introduction}

On the unit ball $\mathbb B^n$, $n\ge2$, Graham \cite{Graham}
proved that a Bergman-harmonic function
$u\in C^n(\overline{\mathbb B^n})$ is pluriharmonic.
Every Bergman-harmonic function on the ball that is continuous up to the
boundary is the Poisson--Szeg\H{o} transform of its boundary values
\cite[pp.~532--533]{Inoue}. Thus Graham's theorem asks what boundary regularity
of a Poisson transform forces in the interior.  We take up this question
for Shilov--Poisson transforms on irreducible bounded symmetric domains
other than the unit disk, allowing the bundle and transform parameters
to vary. This brings together two lines of work: boundary rigidity
after Graham, and the theory of Poisson transforms and invariant Hua
operators.

The boundary-rigidity literature includes work by Graham--Lee \cite{GrahamLee} on degenerate
Laplacians on strictly pseudoconvex domains.
S.-Y.~Li and collaborators \cite{LiNi,LiSimon,LiWei} studied boundary behavior and rigidity
for harmonic maps and functions in Bergman-type metrics
.  On bounded symmetric domains,
Bonami--Buraczewski--Damek--Hulanicki--Jaming \cite{BBDHJ} studied maximal boundary
regularity for bounded Hua-harmonic functions on tube domains
.  Chen--Li \cite{ChenLi} proved a Graham-type pluriharmonicity theorem
on several types of classical domains, and Xiao \cite{Xiao} studied
boundary pluriharmonicity and related rigidity questions there
.  Related work on the ball includes Ahern--Bruna--Cascante
\cite{ABC} and Q.~Zhang \cite{QZhang17}.

The Poisson-transform side begins with Hua.  His work\cite{HuaBook} on harmonic functions and Poisson
formulas on the classical domains helped establish what are now called
Hua systems ; see also Hua--Lu's monograph \cite{HuaLook}.
Kor\'anyi \cite{Koranyi65} extended the Poisson construction to general bounded symmetric
domains, and Johnson--Kor\'anyi  \cite{JohnsonKoranyi} introduced the invariant
Hua operator on tube domains.
Kor\'anyi--Malliavin \cite{KoranyiMalliavin}, Berline--Vergne
\cite{BerlineVergne}, and Lassalle \cite{Lassalle} developed the equations
and their Poisson formulas further.  Boussejra
\cite{Boussejra03,Boussejra06,Boussejra11} and Boussejra--Koufany
\cite{BK07} studied boundary representations.  The theory for varying
parameters and homogeneous line bundles was developed by Shimeno
\cite{Shimeno96,Shimeno}, Koufany--Zhang \cite{KZ06,KZ}, and Kor\'anyi
\cite{Koranyi}.  Related work on line bundles includes Zhang
\cite{ZhangBerezin00}, Boussejra--Ourchane \cite{BO}, and
El Wassouli--Oukacha \cite{EWO}.

The two lines meet at the Shilov--Poisson transform: it takes data on the
Shilov boundary to sections satisfying invariant differential equations.
We ask what further restrictions follow when such a section has finite
Euclidean regularity up to the full boundary $\partial\Omega$.  The answer
depends on the parameters.  Outside five exceptional families,
sufficiently high but finite regularity forces the transform to vanish.
In each exceptional family, the interior conclusion is a covariant
differential equation.  In two families, the equation holds without any
boundary regularity assumption.  Our classification covers all parameter values on these domains,
including domains of exceptional type.

Throughout the paper, let $\Omega=G/K\subset V$ be an irreducible bounded
symmetric domain other than the unit disk. Write $n$ and $r$ for its
complex dimension and rank, and let $S$ be its Shilov boundary.
If $a,b$ are its characteristic multiplicities, then
$\frac nr=1+b+\frac a2(r-1)$.
Let
$\mathbb N=\{1,2,\ldots\}$ and $\mathbb N_0=\mathbb N\cup\{0\}$.  

Following
Kor\'anyi \cite{Koranyi}, for $\nu,s\in\mathbb C$ let $L^\Omega_\nu$ and
$L^S_{\nu;s}$ be the corresponding homogeneous line bundles over $\Omega$
and $S$, and write $$P_{\nu,s}:L^2(S,L^S_{\nu;s})\longrightarrow C^\infty(\Omega,L^\Omega_\nu)$$
for the Shilov--Poisson transform.  See Section~\ref{5} for details.

Boundary regularity below always means Euclidean regularity of the scalar
coefficient in the canonical trivialization.  For $m\in \mathbb{N}_0$ and $\varepsilon\in (0,1]$,  the notation $C^m(\overline\Omega,L^\Omega_\nu)$ and
$C^{m,\varepsilon}(\overline\Omega,L^\Omega_\nu)$ requires the scalar
coefficient to extend to a Euclidean neighborhood of $\overline\Omega$
with $C^m$ and $C^{m,\varepsilon}$ regularity, respectively. For $q>0$, $\mathcal R^q$ means $C^q$ when
$q\in\mathbb N$, and otherwise the union of
$C^{\lfloor q\rfloor,\varepsilon}$ over
$q-\lfloor q\rfloor<\varepsilon\le1$.  Section~\ref{5} gives the precise
definitions and also defines the order-$(L+1)$ covariant differential
operator $\mathbb D_{L,j}$ used below.
In the third row below, $\bar\partial$ is the Dolbeault operator
applied to $\mathbb D_{-\nu,0}F$. In the Harish--Chandra
trivialization, it acts componentwise.

\begin{theorem}\label{3}
Let $\nu,s\in\mathbb C$, let $f\in L^2(S,L^S_{\nu;s})$, and put
$F=P_{\nu,s}f$.  Then

\begin{center}
\small
\renewcommand{\arraystretch}{1.5}
\begin{tabular}{@{}p{0.34\textwidth}p{0.60\textwidth}@{}}
\hline
\textbf{Parameter} & \textbf{Conclusion}\\
\hline

\(\displaystyle s+\nu=\frac{2n}{r}\)
&
\(F\) is holomorphic.
\\
\hline

\(\displaystyle s=\nu,\quad
\nu\ne\frac nr,\quad \nu\notin-\mathbb N_0\)
&
\(\displaystyle
F\in
C^{\max\{0,\lceil\frac nr-\operatorname{Re}\nu\rceil\}}
(\overline\Omega,L^\Omega_\nu)
\Longrightarrow
F\text{ is holomorphic}.
\)
\\
\hline

\(\displaystyle s=\nu\in-\mathbb N_0\)
&
\(\displaystyle
F\in
\begin{cases}
\mathcal R^{\frac nr-\nu}(\overline\Omega,L^\Omega_\nu),
&\Omega\text{ is of tube type},\\
C^{b+1-\nu}(\overline\Omega,L^\Omega_\nu),
&\Omega\text{ is non-tube}
\end{cases}
\Longrightarrow
\bar\partial\mathbb D_{-\nu,0}F=0.
\)
\\
\hline

\(\displaystyle
\frac nr-\frac{s+\nu}{2}\in\mathbb N,\quad
\frac nr-\frac{s-\nu}{2}\in-\mathbb N_0
\)
&
\(\displaystyle
\mathbb D_{
\frac{s-\nu}{2}-\frac nr,\,
\frac nr-\frac{s+\nu}{2}}F=0.
\)
\\
\hline

\(\displaystyle
-\frac{s+\nu}{2}\in\mathbb N_0,\quad
\frac{s-\nu}{2}\in\mathbb N,\quad
\frac nr-\frac{s-\nu}{2}\notin-\mathbb N_0
\)
&
\(\displaystyle
F\in
\mathcal R^{\frac nr-\frac{s+\nu}{2}}
(\overline\Omega,L^\Omega_\nu)
\Longrightarrow
\mathbb D_{
-\frac{s+\nu}{2},\,
\frac{s-\nu}{2}}F=0.
\)
\\
\hline

All remaining parameters
&
There exists $m\in\mathbb N_0$ such that
\(\displaystyle
F\in C^m(\overline\Omega,L^\Omega_\nu)
\Longrightarrow F=0.
\)
\\
\hline
\end{tabular}
\end{center}
\end{theorem}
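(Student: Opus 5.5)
The plan is to reduce everything to the $K$-type (Schmid) decomposition of the boundary data and analyse each $K$-isotypic component of $F$ separately. First expand $f=\sum_{\mathbf m} f_{\mathbf m}$ along the decomposition of $L^2(S,L^S_{\nu;s})$ into $K$-types. For line bundles over $S$, these are the Schmid types $\mathcal P_{\mathbf m}\otimes\overline{\mathcal P_{\mathbf n}}$ restricted to $S$, twisted by the appropriate character. By $G$-equivariance and Schur's lemma, $P_{\nu,s}$ acts on each type by a multiple of a fixed spherical-type function. In the Harish--Chandra trivialization, this function is a product of $h(z,z)^{\cdot}$ with a hypergeometric function of Faraut--Korányi type in the radial variables $t_1,\dots,t_r$. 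Its leading coefficient is a ratio of generalized Gamma functions $\Gamma_\Omega$ depending on $\nu$, $s$ and $\mathbf m,\mathbf n$.

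The key step is a boundary asymptotic for each component. Along the approach to a generic boundary stratum, for instance letting one radial variable $t_r\to1$, each component should behave like
\[
A_{\mathbf m,\mathbf n}(\nu,s)\,(\text{smooth}) + B_{\mathbf m,\mathbf n}(\nu,s)\,(1-t_r)^{\frac nr-\operatorname{Re}(\ldots)}(\text{smooth}).
\]
This comes from the Gauss/Kummer connection formula for the hypergeometric function in the rank-one slice, and from Harish-Chandra $c$-function type expansions in general. Finite $C^m$ regularity up to $\partial\Omega$, with $m$ above the exponent threshold, forces the singular coefficient to vanish for every $K$-type that actually occurs in $f$. This works because projecting onto a $K$-type is an average over $K$, which preserves $C^m(\overline\Omega)$ and therefore preserves regularity of each component. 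So $f_{\mathbf m,\mathbf n}\neq0$ is only possible when $B_{\mathbf m,\mathbf n}(\nu,s)=0$, or when the exponent is a nonnegative integer that the assumed regularity cannot detect. The thresholds $\lceil n/r-\operatorname{Re}\nu\rceil$, $n/r-\nu$ (tube case) or $b+1-\nu$ (non-tube case), and $n/r-(s+\nu)/2$ in the table should be exactly these exponents. The regularity classes $\mathcal R^q$ are needed at half-integer and noninteger exponents. The non-tube case uses $b$ because the relevant stratum there is the one where the Shilov boundary has codimension governed by $b$.

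Next comes the parameter classification: solve $B_{\mathbf m,\mathbf n}(\nu,s)=0$ explicitly. Because $B$ is a product of reciprocal Gamma factors, it vanishes precisely on arithmetic progressions. I would show the following.
\begin{itemize}
\item If $s+\nu=2n/r$, then only types with $\mathbf n=0$ carry nonzero $P_{\nu,s}$-images. These components are holomorphic, which gives the first row with no regularity needed.
\item In the fourth row, the transform already kills all types outside the kernel of $\mathbb D$, again by Gamma poles. This gives an unconditional equation.
\item In rows two, three and five, the surviving types are exactly those annihilated by $\mathbb D_{L,j}$, by $\bar\partial\mathbb D_{-\nu,0}$, or those with $\mathbf n=0$ (holomorphic).
\end{itemize}
Identifying these surviving sets with kernels relies on the computation of the eigenvalues of $\mathbb D_{L,j}$ on Schmid types, which is assumed from Section~\ref{5}. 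A finite-sum/density argument then passes from componentwise vanishing to $F$: the remaining components sum to a section killed by the stated operator. For all remaining parameters, every type has $B\ne0$ once $m$ exceeds a uniform bound, so $F=0$.

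The main obstacle will be the uniform boundary asymptotics in higher rank. The asymptotic must hold for all $\mathbf m,\mathbf n$ with exponents independent of the type, so that a single $m$ works. The expansion must also be taken at a point of $\partial\Omega$ where the regularity hypothesis actually constrains it. I would handle this by restricting to a rank-one holomorphic disc (a polydisc slice through a maximal tripotent) transversal to a smooth part of $\partial\Omega$, where the component reduces to a classical ${}_2F_1$. The connection coefficients are then explicit, and genericity of the Schmid types ensures the restricted leading coefficient is nonzero.
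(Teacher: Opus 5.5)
You correctly note that $K$-type projections preserve boundary regularity (this is Lemma~\ref{29}). The step your plan rests on, however, is not available. You need an explicit asymptotic $A_{\mathbf m,\mathbf n}+B_{\mathbf m,\mathbf n}(1-t_r)^{(\cdot)}$ with Gamma-factor coefficients for the transform of \emph{every} $K$-type, uniform in the type, and then an identification of the ``surviving'' types with kernels of the operators. The transform of a general $K$-finite vector is a vector-valued $\tau$-spherical function, not a multiple of one hypergeometric function. In the scalar parameters $\sigma,\theta$ of \eqref{8}, on a disc $z=tc$ it is the double series $(1-t^2)^\sigma\sum_{a,b}\frac{(\theta)_a(\sigma)_b}{a!\,b!}\,t^{a+b}\int_S\overline{\ell_c}^{\,a}\ell_c^{\,b}f\,du$. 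That is not a single ${}_2F_1$, and ``genericity'' gives no control over its singular coefficient. For non-tube $\Omega$ of higher rank, $K/K_1$ is not a symmetric pair and $L^2(S)$ need not be multiplicity-free, so Schur's lemma gives no scalar reduction. Section~\ref{5} also contains no eigenvalue computation for $\mathbb D_{L,j}$, which is $\mathcal W_{L+1}^\vee$-valued.

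The missing idea is what makes all of this unnecessary. Each conclusion is a statement at the origin: $\bar\partial F(0)=0$, $\mathcal D_qH(0)=0$, or $\bar\partial\mathcal D_qF(0)=0$. By \eqref{20} and Lemma~\ref{61}, such a statement involves only one low $K$-type: $\overline{\mathcal P_{(1)}}|_S$, $\mathcal P_{(q)}|_S$, or $\mathcal M_q$. One explicit radial computation for one vector of that type excludes it. Lemma~\ref{29}, together with \eqref{27}, \eqref{34} and \eqref{38}, then moves the identity to every point of $\Omega$. The last row likewise needs no uniformity over types. Lemma~\ref{50} moves a point where $F\ne0$ to $0$ and $K$-averages, so regularity of a nonzero $F$ forces regularity of $P_{\sigma,\theta}1$. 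Lemma~\ref{48} applied to \eqref{45} then gives \eqref{52} directly.

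The third row for non-tube $\Omega$ is a second, independent gap. The hypothesis there is $C^{b+1-\nu}$, and $b+1-\nu=\frac nr-\nu-\frac a2(r-1)$ is strictly smaller than $\frac nr-\nu=\sigma$ when $r\ge2$. A radial test on a disc through a minimal tripotent approaches a smooth boundary point and produces the singular exponent $\sigma$ (as in Lemma~\ref{65}). It therefore cannot yield this threshold, and your remark about the codimension of $S$ does not supply a mechanism.

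The paper works instead on the boundary faces $c+\overline{\Omega_0(c)}\subset\partial\Omega$, with $c$ of rank $r-1$. These are $(b+1)$-dimensional balls on which \eqref{74} degenerates to the ball equation \eqref{80}. A Graham-type spherical-harmonic argument there (Lemma~\ref{81}) kills the $H^{q,1}$-component of $f(c+\cdot)$, using the $(1-x)^{b+q}\log(1-x)$ term of ${}_2F_1(1,1;b+q+2;x)$. The integral geometry of Appendix~\ref{90} then turns these slice conditions into $f\perp\mathcal M_q$; it uses the averaging operator $\mathcal A_q$, whose range is $\mathcal M_q$ (Lemma~\ref{109}). Even for tube type you would need Lemma~\ref{64} ($\mathcal M_q$ is a full irreducible isotypic component) and the explicit vector $f_c$ of Lemma~\ref{65}. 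As it stands, your argument covers only the two unconditional rows. Those follow directly from the kernel formula \eqref{8} (Proposition~\ref{54}), not from Gamma poles.
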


At $(\nu,s)=(0,0)$, both line bundles are trivial and $P_{0,0}$ is the
Poisson--Szeg\H{o} transform. This case of Theorem~\ref{3} gives
the following result for Bergman-harmonic functions.

\begin{corollary}[Graham type Theorem]
\label{4} Let $F\in C^{2}(\Omega)$ be Bergman-harmonic. If
\[
F\in%
\begin{cases}
\mathcal{R}^{\frac nr}(\overline\Omega), & \Omega\text{ is of tube
type},\\[1mm]%
C^{b+1}(\overline\Omega), & \Omega\text{ is non-tube},
\end{cases}
\]
then $F$ is pluriharmonic.
\end{corollary}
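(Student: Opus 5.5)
The plan is to specialize Theorem~\ref{3} to $(\nu,s)=(0,0)$, which lies in the third row, and then to interpret the resulting covariant equation.

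\textbf{Step 1: represent $F$ as a Poisson--Szeg\H{o} transform.} The hypothesis gives at least $F\in C^1(\overline\Omega)$; in the tube case $n/r\ge 1$, and in the non-tube case $b+1\ge1$. So $F$ is continuous on $\overline\Omega$. By the Hua--Kor\'anyi characterization, a Bergman-harmonic function continuous up to the boundary is the Poisson--Szeg\H{o} integral of its restriction to $S$. This is the analogue for bounded symmetric domains of the ball result cited from \cite{Inoue}, and I would invoke it in the form used in Section~\ref{5}. Hence $F=P_{0,0}f$ with $f=F|_S\in C(S)\subset L^2(S,L^S_{0;0})$, the bundles being trivial at $\nu=s=0$.

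\textbf{Step 2: apply the third row.} Since $s=\nu=0\in-\mathbb N_0$, the third row of Theorem~\ref{3} applies. Its regularity threshold becomes exactly $\mathcal R^{n/r}(\overline\Omega)$ in the tube case and $C^{b+1}(\overline\Omega)$ in the non-tube case. Therefore $\bar\partial\,\mathbb D_{0,0}F=0$.

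\textbf{Step 3: identify $\mathbb D_{0,0}$ and conclude.} By the definition in Section~\ref{5}, $\mathbb D_{L,j}$ has order $L+1$, so $\mathbb D_{0,0}$ is first order. On the trivial bundle it should be the holomorphic gradient $\partial F$, viewed as a $V^*$-valued function, possibly up to an invertible factor such as $h(z,z)^{-1}$-type Bergman operator multiplication. The conclusion $\bar\partial\,\mathbb D_{0,0}F=0$ then says $\partial F$ has holomorphic components, that is, $\partial\bar\partial F=0$ componentwise after removing the invertible factor. This is precisely pluriharmonicity.

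\textbf{Main obstacle.} The main obstacle is this identification in Step~3. If $\mathbb D_{0,0}$ carries a non-holomorphic invertible matrix factor, then one must check that $\bar\partial$ applied in the Harish--Chandra trivialization still yields $\bar\partial\partial F=0$. I would verify this directly from the explicit formula for $\mathbb D_{L,j}$ at $L=j=0$ given in Section~\ref{5}. Step~1 is the only other external input needed.
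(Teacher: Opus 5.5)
Your proposal is correct and follows the paper's proof. Inoue's theorem, which the paper applies directly on general bounded symmetric domains and not only on the ball, gives $F=P_{n/r,n/r}(F|_S)$, the scalar form of $P_{0,0}$, and the third row of Theorem~\ref{3} then gives $\bar\partial\mathbb D_{0,0}F=0$.

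The obstacle you flag in Step~3 does not arise. By \eqref{36} with $j_0=0$ there is no $h$-factor, so $\mathbb D_{0,0}=\mathcal D_1$. Since $\mathcal P_{(1)}$ consists of linear forms, the components of $\mathcal D_1F$ are exactly $\partial_1F,\ldots,\partial_nF$, which yields $\bar\partial_j\partial_iF=0$.
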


On the unit ball $\mathbb B^n$, $n\ge2$, Corollary~\ref{4}
is Graham's theorem.  On tube-type
domains, its pluriharmonicity conclusion overlaps with the earlier
result for bounded Hua-harmonic functions in \cite{BBDHJ}, although the
boundary regularity assumptions differ.  In higher rank,
Corollary~\ref{4} covers every irreducible bounded symmetric domain,
including the exceptional domains; compare \cite{ChenLi,Xiao} for the
classical domains.  The required regularity has finite order and depends
only on the root multiplicities of $\Omega$.

The proof first uses radial Poisson transforms on minimal disks to
restrict the possible parameters. The case $s=\nu\in-\mathbb N_0$
requires separate arguments for tube and non-tube domains.

Section~\ref{5} recalls the Poisson transform and the basic notation.
Section~\ref{39} determines the possible parameter sets.
Sections~\ref{63} and~\ref{69} treat the tube and non-tube cases.
Section~\ref{87} proves Theorem~\ref{3} and Corollary~\ref{4}.
Appendix~\ref{90} completes the proof of Proposition~\ref{84}.

\section{The Poisson transform and basic properties}

\label{5}

We retain $n,r,a,b$ from the Introduction and let \(p_\Omega=(r-1)a+b+2\) be
the genus of $\Omega$.

\subsection{Line bundles and the scalar kernel}

The Poisson transform in Theorem~\ref{3} acts between line bundles.
We first describe these bundles and their sections, then pass to the
scalar kernel used in the proofs.

Let $G=\operatorname{Aut}(\Omega)^\circ$ be the identity component of the
biholomorphic automorphism group of $\Omega$. Let $K$ be the subgroup
fixing $0$, so that $\Omega=G/K$. Write $\widetilde G$ for the universal
covering group of $G$, let $\pi:\widetilde G\to G$ be the covering map,
and put $\widetilde K=\pi^{-1}(K)$. Fix $e_0\in S$, let $P_1$ be the
subgroup of $G$ fixing $e_0$, and put
$\widetilde P_1=\pi^{-1}(P_1)$. Since $G$ acts transitively on $S$,
we have $S\simeq G/P_1\simeq\widetilde G/\widetilde P_1$.
For $\widetilde g\in\widetilde G$, write $g=\pi(\widetilde g)$ and
\[
J_{\widetilde g}(z)=\det_{\mathbb C}g'(z),\qquad z\in\Omega.
\]
For each $\lambda\in\mathbb C$, the lift $\widetilde g$ specifies
the branch of $J_{\widetilde g}^{\lambda}$ obtained by continuation
from the identity. Lemma~\ref{28}
shows that this branch extends holomorphically across
$\overline\Omega$.

For $\nu,s\in\mathbb C$, define
\[
\begin{aligned}
\tau_\nu(\widetilde k)
&=J_{\widetilde k^{-1}}(0)^{\nu/p_\Omega},
&&\widetilde k\in\widetilde K,\\
\chi_{\nu,s}(\widetilde p)
&=J_{\widetilde p}(e_0)^{-\frac{s+\nu}{2p_\Omega}}
  \overline{J_{\widetilde p}(e_0)}^{-\frac{s-\nu}{2p_\Omega}},
&&\widetilde p\in\widetilde P_1.
\end{aligned}
\]
By the Jacobian cocycle identity \eqref{23}, these are characters, since elements
of $\widetilde K$ fix $0$ and elements of $\widetilde P_1$ fix $e_0$.
They define the line bundles
\[
L^\Omega_\nu
=\widetilde G\times_{\widetilde K}\mathbb C_{\tau_\nu}
\longrightarrow\Omega,
\qquad
L^S_{\nu;s}
=\widetilde G\times_{\widetilde P_1}\mathbb C_{\chi_{\nu,s}}
\longrightarrow S.
\]
The corresponding equivalence relations are
\[
\begin{aligned}
(\widetilde g,\zeta)
&\sim(\widetilde g\widetilde k,\tau_\nu(\widetilde k)^{-1}\zeta),
&&\widetilde k\in\widetilde K,\\
(\widetilde g,\zeta)
&\sim(\widetilde g\widetilde p,\chi_{\nu,s}(\widetilde p)^{-1}\zeta),
&&\widetilde p\in\widetilde P_1.
\end{aligned}
\]

A section $\sigma$ of $L^\Omega_\nu$ is represented by a function
$\sigma^\sharp$ on $\widetilde G$ through
$\sigma(g0)=[\widetilde g,\sigma^\sharp(\widetilde g)]$.
Likewise, a section $\varphi$ of $L^S_{\nu;s}$ is represented by
$\varphi^\sharp$ through
$\varphi(ge_0)=[\widetilde g,\varphi^\sharp(\widetilde g)]$.
They satisfy
\[
\begin{aligned}
\sigma^\sharp(\widetilde g\widetilde k)
&=\tau_\nu(\widetilde k)^{-1}\sigma^\sharp(\widetilde g),
&&\widetilde k\in\widetilde K,\\
\varphi^\sharp(\widetilde g\widetilde p)
&=\chi_{\nu,s}(\widetilde p)^{-1}\varphi^\sharp(\widetilde g),
&&\widetilde p\in\widetilde P_1.
\end{aligned}
\]
Using this equivariance, we obtain scalar functions $F$ on $\Omega$
and $f$ on $S$ by
\[
\begin{aligned}
F(g0)
&=J_{\widetilde g}(0)^{-\nu/p_\Omega}
  \sigma^\sharp(\widetilde g),\\
f(ge_0)
&=J_{\widetilde g}(e_0)^{-\frac{s+\nu}{2p_\Omega}}
  \overline{J_{\widetilde g}(e_0)}^{-\frac{s-\nu}{2p_\Omega}}
  \varphi^\sharp(\widetilde g).
\end{aligned}
\]
By \eqref{23} and the equivariance above, both values are independent
of the chosen lift $\widetilde g$.
Conversely, $F$ and $f$ determine $\sigma$ and $\varphi$. We use these
scalar representatives throughout the paper. In particular, a boundary
section belongs to $L^2(S,L^S_{\nu;s})$ precisely when its scalar
representative $f$ belongs to $L^2(S)$.

Left multiplication on the first factor gives natural
$\widetilde G$-actions on the two bundles. On lifted sections this is
left translation. For the scalar representative of a section of
$L^\Omega_\nu$, the action becomes
\begin{equation}
\label{6}
(T_\nu(\widetilde g)F)(z)
=J_{\widetilde g^{-1}}(z)^{\nu/p_\Omega}F(g^{-1}z).
\end{equation}

The Poisson transform sends a boundary section $\varphi$ to a section
of $L^\Omega_\nu$ by averaging its lift. In the notation above,
\cite[formula~(2.1)]{Koranyi} gives
\begin{equation}
\label{7}
(P_{\nu,s}\varphi)^\sharp(\widetilde g)
=\int_{\widetilde K/Z(\widetilde G)}
 \tau_\nu(\widetilde k)
 \varphi^\sharp(\widetilde g\widetilde k)\,d\widetilde k,
\qquad \widetilde g\in\widetilde G,
\end{equation}
where $Z(\widetilde G)$ is the center of $\widetilde G$, and
$d\widetilde k$ is the normalized Haar measure on
$\widetilde K/Z(\widetilde G)$.

To write the transform in scalar form, let $h(z,w)$ be the generic
norm of $\Omega$, normalized by $h(z,0)=h(0,w)=1$. It is polynomial
in $z$ and $\bar w$, holomorphic in $z$, antiholomorphic in $w$,
and satisfies $h(z,w)=\overline{h(w,z)}$. The Bergman kernel of
$\Omega$ is a positive constant multiple of $h(z,w)^{-p_\Omega}$;
see \cite[Section~2]{EYZ}. Let $du$ be the normalized
$K$-invariant measure on $S$.

The scalar kernel is simplest with parameters $\sigma,\theta$ given by
\[
\nu=\theta-\sigma,\qquad
s=\frac{2n}{r}-\sigma-\theta,
\]
or, equivalently,
$\sigma=\frac nr-\frac{s+\nu}{2}$ and
$\theta=\frac nr-\frac{s-\nu}{2}$.
Let $f$ represent $\varphi$ and $F$ represent $P_{\nu,s}\varphi$.
Then \cite[Theorem~5.2]{Koranyi} gives $F=P_{\sigma,\theta}f$, where
\begin{equation}
\label{8}
P_{\sigma,\theta}f(z)
=h(z,z)^\sigma
 \int_S h(z,\omega)^{-\theta}
 h(\omega,z)^{-\sigma}f(\omega)\,du(\omega),
\qquad z\in\Omega.
\end{equation}
We use this formula below. For $f\in L^2(S)$, it defines a smooth
section on $\Omega$ through the scalar realization above.

Boundary regularity is also measured in this scalar realization.
For a section of $L^\Omega_\nu$, the notation
$C^m(\overline\Omega,L^\Omega_\nu)$ and
$C^{m,\varepsilon}(\overline\Omega,L^\Omega_\nu)$ means that its
scalar representative extends to a Euclidean neighborhood of
$\overline\Omega$ with the indicated regularity. For $q>0$, set
\begin{equation}\label{2}
\mathcal R^q(\overline\Omega,L^\Omega_\nu)
=
\begin{cases}
C^q(\overline\Omega,L^\Omega_\nu),&q\in\mathbb N,\\[1mm]
\displaystyle\bigcup_{q-m<\varepsilon\le1}
C^{m,\varepsilon}(\overline\Omega,L^\Omega_\nu),
&m=\lfloor q\rfloor<q.
\end{cases}
\end{equation}
Lemmas~\ref{28} and~\ref{29} show that these classes are preserved
by the $\widetilde G$-action.

\subsection{Jordan structure and \texorpdfstring{$K$}{K}-types}

The Jordan structure of $V$ governs the generic norm in \eqref{8},
while the $K$-action organizes the polynomial and boundary
functions used to study the transform. In the Harish--Chandra realization
$\Omega\subset V$, let $Q$ be the quadratic map from
\cite[Section~2]{KZ}. Set
\[
D(z,\bar v)w=(Q(z+w)-Q(z)-Q(w))\bar v,
\qquad z,v,w\in V.
\]
The Jordan triple product is $\{z,\bar v,w\}=D(z,\bar v)w$.
A tripotent $e\in V$ satisfies $\{e,\bar e,e\}=2e$.
Tripotents $e_1,e_2$ are orthogonal if
$\{e_1,\bar e_2,e_2\}=0$. A nonzero tripotent is minimal
if it is not a sum of two nonzero orthogonal tripotents.
Every tripotent is a sum of orthogonal minimal tripotents;
the number of summands is its rank. Thus minimal and maximal
tripotents have ranks $1$ and $r$, respectively.
For a tripotent $c$, let
\[
V_j(c)=\{v\in V:D(c,\bar c)v=jv\},\qquad j=0,1,2
\]
be its Peirce spaces. Then $V=V_2(c)\oplus V_1(c)\oplus V_0(c)$ is its Peirce
decomposition.

A Jordan frame is a family $e_1,\ldots,e_r$ of mutually
orthogonal minimal tripotents. Fix such a frame. Its joint
Peirce spaces are, for $0\le i\le j\le r$,
\begin{equation}\label{502}
V_{ij}=\bigl\{v\in V:
D(e_\ell,\bar e_\ell)v=(\delta_{i\ell}+\delta_{j\ell})v
\text{ for every }1\le\ell\le r\bigr\},
\end{equation}
where $\delta_{0\ell}=0$. We have
$V=\bigoplus_{0\le i\le j\le r}V_{ij}$, with $V_{00}=0$
and $V_{ii}=\mathbb Ce_i$; see
\cite[formulas~(20)--(21)]{EYZ}. If
$c=e_1+\cdots+e_m$ for $0\le m\le r$, its Peirce spaces satisfy
\begin{equation}\label{503}
\begin{aligned}
V_2(c)=\bigoplus_{1\le i\le j\le m}V_{ij},\ 
V_1(c)=\bigoplus_{i=1}^{m}V_{0i}
        \oplus\bigoplus_{1\le i\le m<j\le r}V_{ij},\ 
V_0(c)=\bigoplus_{j=m+1}^{r}V_{0j}
        \oplus\bigoplus_{m<i\le j\le r}V_{ij}.
\end{aligned}
\end{equation}
These identities follow from \cite[formulas~(19)--(21)]{EYZ}.

The generic norm has a simple formula in spectral coordinates.
If $z=\sum_{j=1}^r t_je_j$ is a spectral decomposition,
where $e_1,\ldots,e_r$ is a Jordan frame and zero coefficients
are allowed, then \cite[Section~2, after formula~(23)]{EYZ} gives
\begin{equation}
\label{9}h(z,z)=\prod_{j=1}^{r}(1-|t_j|^2).
\end{equation}

The group $K$ fixes the origin, acts linearly on $V$, and preserves
the Jordan structure. Write $(k\cdot F)(z)=F(k^{-1}z)$ for functions
on $\Omega$. Under this action, the holomorphic polynomial algebra has the
multiplicity-free decomposition
\cite[Section~2]{EYZ}:
\begin{equation}
\label{10}
\operatorname{Pol}(V)=\bigoplus_{\alpha}\mathcal P_{\alpha},
\end{equation}
where $\alpha=(\alpha_1,\ldots,\alpha_r)$ ranges over the
partitions $\alpha_1\ge\cdots\ge\alpha_r\ge0$.

To use these types for boundary data, fix $e_0\in S$, let
$K_1=\{k\in K:ke_0=e_0\}$. Then $S=K/K_1$. On scalar
functions on $S$, we use $(k\cdot f)(\omega)=f(k^{-1}\omega)$.
All $K$-types below refer to these actions. Let $\tau$ be an
irreducible unitary representation of $K$ on the
finite-dimensional Hilbert space $\mathcal H_\tau$, and let
$L^2(S)_\tau$ be the sum of its copies in $L^2(S)$.
With normalized Haar measure $dk$ on $K$, Schur orthogonality
gives the orthogonal projection from $L^2(S)$ to $L^2(S)_\tau$:
\[
\Pi_\tau^S f(\omega)
=\dim\mathcal H_\tau\int_K
\overline{\operatorname{tr}\tau(k)}f(k^{-1}\omega)\,dk,
\qquad \omega\in S,
\]
where $\operatorname{tr}\tau(k)$ is the trace of $\tau(k)$.
For functions on $\Omega$, the corresponding projection is
\[
\Pi_\tau^\Omega F(z)
=\dim\mathcal H_\tau\int_K
\overline{\operatorname{tr}\tau(k)}F(k^{-1}z)\,dk,
\qquad z\in\Omega.
\]
The generic norm is $K$-invariant
\cite[Section~2]{KZ}. Thus the integral in \eqref{8}
satisfies
\[
P_{\sigma,\theta}(k\cdot f)=k\cdot P_{\sigma,\theta}f,
\qquad k\in K.
\]
Averaging this identity with the character of $\tau$ yields
\begin{equation}
\label{11}
\Pi_\tau^\Omega(P_{\sigma,\theta}f)
=P_{\sigma,\theta}(\Pi_\tau^S f),
\qquad f\in L^2(S).
\end{equation}

For the polynomial types in \eqref{10}, the boundary projection
has a concrete range. Let $\mathcal P_\alpha^{K_1}$ be the
$K_1$-fixed subspace of $\mathcal P_\alpha$. By
\cite[Section~2]{EYZ},
\begin{equation}
\label{12}
\dim\mathcal P_\alpha^{K_1}=1.
\end{equation}
Restriction to $S$ is a nonzero $K$-equivariant map from
$\mathcal P_\alpha$ to $L^2(S)$. Frobenius reciprocity and
\eqref{12} show that $\mathcal P_\alpha|_S$ is the unique
copy of its $K$-type in $L^2(S)$. Thus, when $\tau$ is the
type of $\mathcal P_\alpha$, the projection $\Pi_\tau^S$
in \eqref{11} is the orthogonal projection onto
$\mathcal P_\alpha|_S$.

\subsection{Fischer inner product and expansion at the origin}

We relate the polynomial types in \eqref{10} to their restrictions
to $S$ through the Fischer inner product. First, normalize the
$K$-invariant Hermitian inner product on $V$ by
$\langle z,w\rangle=p_\Omega^{-1}\operatorname{tr}D(z,\bar w)$,
as in \cite[formula~(2)]{KZ}. For every minimal tripotent $c$,
$\operatorname{tr}D(c,\bar c)=p_\Omega$, so $\|c\|=1$.

Choose orthonormal coordinates $z=(z_1,\ldots,z_n)$ on $V$.
For $p(z)=\sum_\gamma p_\gamma z^\gamma$, set
\[
p^*(z)=\sum_\gamma\overline{p_\gamma}z^\gamma,
\qquad
p^*(\partial)=\sum_\gamma\overline{p_\gamma}\,\partial^\gamma.
\]
Here $\partial^\gamma=\partial_1^{\gamma_1}\cdots
\partial_n^{\gamma_n}$.
We use the Fischer inner product
\cite[formula~(30)]{EYZ}
\[
\langle p,q\rangle_F=(q^*(\partial)p)(0),
\qquad p,q\in\operatorname{Pol}(V).
\]
The monomials are orthogonal, with $\|z^\gamma\|_F^2=\gamma!$,
and multiplication by $p$ has Fischer adjoint $p^*(\partial)$.
For $\xi\in V$, write $\ell_\xi(z)=\langle z,\xi\rangle$ and
$\partial_\xi=\sum_{j=1}^n\xi_j\partial_j$; then
$\ell_\xi^*(\partial)=\partial_\xi$.

To state the kernel expansion, for $u\in\mathbb C$ and
$\alpha=(\alpha_1,\ldots,\alpha_r)\in\mathbb N_0^r$, put
\[
(u)_\alpha
=\prod_{\ell=1}^r
\left(u-\frac a2(\ell-1)\right)_{\alpha_\ell},
\]
where $(u)_0=1$ and $(u)_j=u(u+1)\cdots(u+j-1)$ for $j\ge1$.
For each $\alpha$, choose a Fischer-orthonormal basis
$(p_{\alpha,i})_i$ of $\mathcal P_\alpha$ and write
$K_\alpha^F(z,w)=\sum_i p_{\alpha,i}(z)
\overline{p_{\alpha,i}(w)}$ for its Fischer reproducing kernel.
The Faraut--Kor\'anyi expansion is
\begin{equation}
\label{13}
h(z,w)^{-u}=\sum_\alpha(u)_\alpha K_\alpha^F(z,w),
\qquad z,w\in\Omega,
\end{equation}
for every $u\in\mathbb C$; see \cite[formula~(33)]{EYZ}.

We also need the Fischer kernel of all homogeneous polynomials
of degree $q$. By \eqref{10}, their space is
$\bigoplus_{|\alpha|=q}\mathcal P_\alpha$; computing its kernel
from the monomial basis gives
\begin{equation}
\label{14}
\sum_{|\alpha|=q}K_\alpha^F(z,w)
=\sum_{|\gamma|=q}\frac{z^\gamma\overline{w^\gamma}}{\gamma!}
=\frac{\langle z,w\rangle^q}{q!}.
\end{equation}

Now pass from Fischer polynomials to their restrictions on $S$.
Let $H^2(S)=\overline{\operatorname{Pol}(V)|_S}^{\,L^2(S)}$
be the Hardy space. The spaces $\mathcal P_\alpha|_S$ are
mutually orthogonal, and their $L^2(S)$ inner product is
related to the Fischer inner product by
\cite[Section~2, formula~(2.8)]{UpmeierToeplitz21}:
\begin{equation}
\label{15}
\langle p_1,p_2\rangle_{S}
=\frac{1}{(n/r)_\alpha}\langle p_1,p_2\rangle_F,
\qquad p_1,p_2\in\mathcal P_\alpha.
\end{equation}
At $u=n/r$, the coefficients in \eqref{13} are therefore
exactly those needed to turn the Fischer kernels into Hardy
kernels. Thus the Cauchy--Szeg\H{o} kernel and its reproducing
formula are
\begin{equation}\label{151}
C(z,w)=h(z,w)^{-n/r},
\qquad
f(z)=\int_S C(z,\omega)f(\omega)\,du(\omega),
\quad f\in H^2(S).
\end{equation}
Here $f(z)$ denotes the holomorphic extension of $f$ to $\Omega$.

Write $(q)=(q,0,\ldots,0)$ for $q\in\mathbb N_0$.
At a rank-one element of $V$, that is, a nonzero multiple of a
minimal tripotent, the Fischer kernel of $\mathcal P_{(q)}$
has a simple form.

\begin{lemma}
\label{16}
Let $q\in\mathbb N$ and let $\xi\in V$ have rank one. Then
\begin{equation}
\label{17}
K_{(q)}^F(z,\xi)=\frac{\langle z,\xi\rangle^q}{q!}.
\end{equation}
As $\xi$ ranges over the rank-one elements of $V$, the
polynomials $\ell_\xi^q$ span $\mathcal P_{(q)}$, and the
operators $\partial_\xi^q$ span
$\{p^*(\partial):p\in\mathcal P_{(q)}\}$.
\end{lemma}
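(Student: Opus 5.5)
Since $\xi$ has rank one, write $\xi=t c$ with $c$ a minimal tripotent and $t\in\mathbb C$; because both sides of \eqref{17} are homogeneous of bidegree $(q,q)$ in $(z,\bar\xi)$, it suffices to treat $\xi=c$. The plan is to compare \eqref{14} with the decomposition $\sum_{|\alpha|=q}K^F_\alpha(z,c)$ and show that every term with $\alpha\ne(q)$ vanishes. For this, I use that $K^F_\alpha(\cdot,c)$ is the reproducing element at $c$ in $\mathcal P_\alpha$, so $K_\alpha^F(z,c)=\overline{K^F_\alpha(c,z)}$ and $K_\alpha^F(c,c)=\|K^F_\alpha(\cdot,c)\|_F^2$. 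Hence it is enough to prove that $K^F_\alpha(c,c)=0$ for $\alpha\ne(q)$, i.e.\ that every $p\in\mathcal P_\alpha$ vanishes at $c$ when $\alpha_2\ge1$.

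To show this vanishing, I would use the structure of $\mathcal P_\alpha$: it is generated under $K$ by the conical polynomial $\Delta_\alpha=\Delta_1^{\alpha_1-\alpha_2}\Delta_2^{\alpha_2-\alpha_3}\cdots\Delta_r^{\alpha_r}$, where $\Delta_j$ are the principal minors attached to a Jordan frame (\cite[Section~2]{EYZ}). Thus $\mathcal P_\alpha$ is spanned by $z\mapsto \Delta_\alpha(kz)$, $k\in K$. Since $K$ preserves the Jordan structure, $kc$ is again a minimal tripotent, hence a rank-one element. The key point is that $\Delta_j$ with $j\ge2$ vanishes on all elements of rank at most one: $\Delta_j(z)$ is the Jordan determinant of the Peirce-$2$ component of $z$ relative to $e_1+\cdots+e_j$, and the projection of a rank-one element has rank at most one in $V_2(e_1+\cdots+e_j)$, which has rank $j\ge2$, so its determinant is zero. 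When $\alpha_2\ge1$, $\Delta_\alpha$ contains a factor $\Delta_j$ with $j\ge2$, so $\Delta_\alpha(kc)=0$, and therefore all of $\mathcal P_\alpha$ vanishes at $c$. Combined with \eqref{14}, this gives \eqref{17}. I expect the main obstacle to be justifying this rank statement cleanly, in particular that Peirce projections do not increase rank. If that becomes awkward, an alternative route is to use $\mathcal P_\alpha$'s characterisation through lowest-weight vectors, or to note directly that rank-$\le1$ elements form the $K\mathbb C^\times$-orbit closure of $e_1$ and that $\Delta_2(te_1)=0$.

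For the spanning statements, I would use the reproducing property: the $\ell_\xi^q=q!\,K^F_{(q)}(\cdot,\xi)$ lie in $\mathcal P_{(q)}$. If $p\in\mathcal P_{(q)}$ is Fischer-orthogonal to all of them, then $p(\xi)=0$ for every rank-one $\xi$. The rank-one elements contain the $K$-orbit of $\mathbb C e_1$, and $p(ke_1\cdot t)$ recovers $p$ because $\mathcal P_{(q)}$ is irreducible and $(\mathcal P_{(q)})$ restricted to this orbit is injective. More simply, the set $\{\xi:\text{rank}\le1\}$ spans $V$, and the $K$-invariant subspace spanned by $\ell_\xi^q$ is nonzero inside the irreducible $\mathcal P_{(q)}$, so it is all of $\mathcal P_{(q)}$. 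Finally, applying $p\mapsto p^*(\partial)$, which is conjugate-linear and sends $\ell_\xi^q$ to $\partial_\xi^q$, transfers the spanning statement to the operators.
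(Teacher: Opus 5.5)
Your proof is correct, but it runs in the opposite direction from the paper's.

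\textbf{The paper's route.} It takes the spanning statement as input, citing Upmeier's description of conical polynomials to get that $\mathcal P_{(q)}$ is spanned by the $\ell_c^q$ with $c$ minimal. By \eqref{14}, $\ell_\xi^q/q!$ represents evaluation at $\xi$ on all homogeneous polynomials of degree $q$. Since it lies in $\mathcal P_{(q)}$, it is the reproducing kernel of that subspace at $\xi$, which gives \eqref{17}.

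\textbf{Your route.} You prove \eqref{17} first, by showing that $K^F_\alpha(\cdot,\xi)=0$ for every $\alpha\ne(q)$ with $|\alpha|=q$. Such a $\mathcal P_\alpha$ is the $K$-span of $\Delta_\alpha$, and $\Delta_\alpha$ contains a factor $\Delta_j$ with $j\ge2$, which vanishes on elements of rank at most one. Spanning then follows from irreducibility of $\mathcal P_{(q)}$, since the span of the $\ell_\xi^q$ is nonzero and $K$-invariant ($k\cdot\ell_\xi^q=\ell_{k\xi}^q$). Keep this irreducibility argument and drop the sentence claiming that restriction to the orbit is injective, which assumes what is to be shown. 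The fact that rank-one elements span $V$ plays no role.

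\textbf{What each buys.} Your route avoids the external citation, and it yields directly the vanishing $K^F_\alpha(z,c)=0$ for $\alpha\ne(m)$, which the paper only extracts later, in the proof of \eqref{42}. Its cost is the highest-weight description of $\mathcal P_\alpha$ together with the rank fact you flagged. That rank fact is standard:
\begin{itemize}
\item an element $y$ has rank at most one if and only if $Q(y)\bar V\subset\mathbb C y$;
\item the fundamental formula $Q(Q(x)\bar z)\bar w=Q(x)\overline{Q(z)\overline{Q(x)\bar w}}$ then shows that $Q(x)\bar z$ has rank at most one whenever $z$ does;
\item the Peirce projection onto $V_2(c)$ is $w\mapsto Q(c)\overline{Q(c)\bar w}$, so it does not raise rank one;
\item a rank-one $y\in V_2(e_1+\cdots+e_j)$ with $j\ge2$ cannot be invertible in that Jordan algebra. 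Otherwise $w\mapsto Q(y)\bar w$ would map $V_2(e_1+\cdots+e_j)$, of dimension at least two, onto itself, although its image lies in $\mathbb C y$. Hence its determinant vanishes.
\end{itemize}
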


\begin{proof}
Upmeier's description of conical polynomials
\cite[Section~4, formula~(4.2) and the following paragraph]{UpmeierKinv}
implies that $\mathcal P_{(q)}$ is spanned by
$\ell_c^q$, where $c$ ranges over minimal tripotents.
Every rank-one $\xi$ is a nonzero scalar multiple of such a $c$,
so the polynomials $\ell_\xi^q$ also span $\mathcal P_{(q)}$.

By \eqref{14}, $\ell_\xi^q/q!$ represents evaluation at
$\xi$ on the full space of homogeneous polynomials of degree
$q$. It also lies in $\mathcal P_{(q)}$, so it represents
evaluation on that subspace. This gives \eqref{17}.
Taking Fischer adjoints gives the statement about
$\partial_\xi^q$.
\end{proof}

We next apply the Fischer expansion \eqref{13} to the Poisson
kernel in \eqref{8}. Since $h(0,\omega)=h(\omega,0)=1$ and
$S$ is compact, the complex powers of these factors are
defined and analytic for $z$ in a common neighborhood of $0$,
uniformly for $\omega\in S$.

For $0<t<1$, we have $t\omega\in\Omega$, so \eqref{13} gives
\[
h(z,t\omega)^{-\theta}
=\sum_\alpha(\theta)_\alpha t^{|\alpha|}
K_\alpha^F(z,\omega).
\]
Cauchy estimates on a smaller neighborhood of $0$ allow
$t\uparrow1$ and give the normally convergent expansions
\[
\begin{aligned}
h(z,\omega)^{-\theta}
&=\sum_\alpha(\theta)_\alpha K_\alpha^F(z,\omega),\\
h(\omega,z)^{-\sigma}
&=\sum_\beta(\sigma)_\beta K_\beta^F(\omega,z),
\end{aligned}
\qquad \omega\in S.
\]
The convergence is uniform in $\omega$ and persists after any
fixed number of derivatives in $z$ and $\bar z$ on a smaller
neighborhood.
Thus, for $f\in L^2(S)\subset L^1(S)$, we can multiply the
series and integrate term by term in \eqref{8}. Near $z=0$ this
gives
\begin{equation}
\label{20}
\begin{split}
h(z,z)^{-\sigma}P_{\sigma,\theta}f(z)
={}&\sum_{\alpha,\beta}(\theta)_\alpha(\sigma)_\beta
\sum_{i,i'}p_{\alpha,i}(z)\overline{p_{\beta,i'}(z)}\times
\int_S f(\omega)\overline{p_{\alpha,i}(\omega)}
p_{\beta,i'}(\omega)\,du(\omega).
\end{split}
\end{equation}
The series and its derivatives of any fixed order converge
locally uniformly near $0$.

We will later need the Hardy projection of $p\bar z_j$ for
$p\in\mathcal P_{(q)}$. In the normalization of \eqref{15},
it has the following form; see also
\cite[Lemma~15]{UpmeierToeplitz21}.

\begin{lemma}\label{21}
Let $\Pi_H:L^2(S)\to H^2(S)$ be the orthogonal
projection. For $q\in\mathbb N$ and $p\in\mathcal P_{(q)}$,
\begin{equation}\label{22}
\Pi_H(p\bar z_j|_S)
=
\frac{\partial_jp}{n/r+q-1}\Big|_S,
\qquad
1\le j\le n.
\end{equation}
\end{lemma}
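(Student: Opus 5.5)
The plan is to compute the projection by pairing against an arbitrary polynomial and then using the two tools already set up: the Fischer adjoint relation and the norm comparison \eqref{15}. Since $H^2(S)$ is the closure of $\operatorname{Pol}(V)|_S$ and the spaces $\mathcal P_\beta|_S$ are mutually orthogonal, $\Pi_H(p\bar z_j|_S)$ is determined by its inner products with $g|_S$ for $g\in\mathcal P_\beta$, over all $\beta$.

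First, note that $\bar z_j=\overline{\ell_{e_j}}$ in orthonormal coordinates. Then
\[
\langle p\bar z_j,g\rangle_S=\langle p,z_jg\rangle_S .
\]
Because the $K$-types $\mathcal P_\alpha|_S$ are orthogonal in $L^2(S)$, only the component of $z_jg$ in $\mathcal P_{(q)}$ contributes. Degree forces $g$ to be homogeneous of degree $q-1$. Multiplication by a linear form sends $\mathcal P_\beta$ into the sum of the $\mathcal P_{\beta+\epsilon_i}$. Hence only $\beta=(q-1)$ can produce a $\mathcal P_{(q)}$ component, and the projection lies in $\mathcal P_{(q-1)}|_S$.

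For $g\in\mathcal P_{(q-1)}$, apply \eqref{15} with $\alpha=(q)$ to the $\mathcal P_{(q)}$-component of $z_jg$. The component in the other type $\mathcal P_{(q-1,1)}$ is orthogonal to $p$ in both inner products. This gives
\[
\langle p,z_jg\rangle_S=\frac{1}{(n/r)_{(q)}}\langle p,z_jg\rangle_F=\frac{1}{(n/r)_{(q)}}\langle\partial_jp,g\rangle_F,
\]
using the fact that the Fischer adjoint of multiplication by $z_j$ is $\partial_j$.

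Next, observe that $\partial_jp\in\mathcal P_{(q-1)}$. The cleanest way to see this is to take $p=\ell_\xi^q$ with $\xi$ of rank one, as allowed by Lemma~\ref{16}. Then $\partial_j\ell_\xi^q=q\bar\xi_j\ell_\xi^{q-1}$, which lies in $\mathcal P_{(q-1)}$ by the same lemma, and linearity extends the claim to all $p\in\mathcal P_{(q)}$. Applying \eqref{15} again, now with $\alpha=(q-1)$,
\[
\langle\partial_jp,g\rangle_F=(n/r)_{(q-1)}\langle\partial_jp,g\rangle_S .
\]
The ratio of the two Pochhammer symbols is $(n/r)_{(q-1)}/(n/r)_{(q)}=1/(n/r+q-1)$. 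So $\langle p\bar z_j,g\rangle_S=\langle \partial_jp/(n/r+q-1),g\rangle_S$ for every $g$ in the only relevant type, and this is \eqref{22}.

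The main point needing care is the type-selection step: justifying that $z_j\mathcal P_{(q-1)}$ meets only $\mathcal P_{(q)}$ and $\mathcal P_{(q-1,1)}$, and that nothing else pairs nontrivially with $p$. This is handled by the Pieri-type fact $V\otimes\mathcal P_\beta\to\bigoplus_i\mathcal P_{\beta+\epsilon_i}$. Alternatively, one can avoid Pieri altogether: pair $p$ with $z_jg$ for arbitrary homogeneous $g$, and use that the $L^2(S)$ and Fischer pairings agree up to the constant $1/(n/r)_{(q)}$ on the $\mathcal P_{(q)}$-components. Since $\partial_jp\in\mathcal P_{(q-1)}$, only the $\mathcal P_{(q-1)}$-component of $g$ survives in $\langle\partial_jp,g\rangle_F$, and then \eqref{15} converts that pairing back to $L^2(S)$. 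This second route needs only Fischer orthogonality of the types and Lemma~\ref{16}.
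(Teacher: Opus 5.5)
Your proposal is correct and is essentially the paper's proof. The paper takes exactly your second, Pieri-free route: it pairs with an arbitrary homogeneous $u$ of degree $q-1$, applies \eqref{15} at $(q)$, uses Fischer adjointness of $z_j$ and $\partial_j$, gets $\partial_jp\in\mathcal P_{(q-1)}$ from Lemma~\ref{16}, applies \eqref{15} at $(q-1)$, and concludes by density of polynomials in $H^2(S)$, so the Pieri-type type-selection in your first route is true but unnecessary (and is neither proved nor cited in the paper).
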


\begin{proof}
By Lemma~\ref{16}, $p$ is a linear combination of $\ell_\xi^q$
with $\xi$ of rank one. Since
$\partial_j\ell_\xi^q=q\overline{\xi_j}\ell_\xi^{q-1}$,
the same spanning statement gives
$\partial_jp\in\mathcal P_{(q-1)}$.
For any homogeneous holomorphic polynomial $u$ of degree $q-1$,
Fischer adjointness, orthogonality of the polynomial $K$-types,
and \eqref{15} give
\[
\langle z_ju,p\rangle_S
=\frac{\langle z_ju,p\rangle_F}{(n/r)_q}
=\frac{\langle u,\partial_jp\rangle_F}{(n/r)_q}
=\frac{\langle u,\partial_jp\rangle_S}{n/r+q-1}.
\]
For $u$ of any other homogeneous degree, both sides vanish.
Since polynomials are dense in $H^2(S)$, this identifies the
adjoint of multiplication by $z_j$ at $p$ with
$\partial_jp/(n/r+q-1)$. The adjoint is also
$\Pi_H(p\bar z_j|_S)$, proving \eqref{22}.
\end{proof}

\subsection{Covariance and boundary regularity}

Formula~\eqref{20} gives the expansion of a Poisson transform
near the origin. To carry conclusions obtained there to other
points of $\Omega$, we use the action of $\widetilde G$ on
boundary data and the covariance of the Poisson transform.
The formulas for moving boundary data by $\widetilde G$ use
Jacobians at points of $S$. We first record why these values, and
their complex powers, are defined.

\begin{lemma}
\label{28}
Let $\widetilde g\in\widetilde G$ and put $g=\pi(\widetilde g)$.
The map $g$ and its complex Jacobian extend holomorphically to
a neighborhood of $\overline\Omega$, where the Jacobian does
not vanish. Every complex power specified by $\widetilde g$
extends to a possibly smaller neighborhood. Near the identity
in $\widetilde G$, the extensions can be chosen on a common
neighborhood and depend smoothly on $\widetilde g$.
\end{lemma}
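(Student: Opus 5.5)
We will use the explicit form of automorphisms of $\Omega$ in the Harish--Chandra realization. Every $g\in G$ factors as $g=k\circ\phi_a$, where $k\in K$ acts linearly and $\phi_a$ is the transvection, or Möbius map, sending $a\in\Omega$ to $0$. The linear part $k$ is harmless: it extends to all of $V$, and its Jacobian is a constant of modulus one. So the work is concentrated in $\phi_a$. We will use the standard Jordan-theoretic formula
\[
\phi_a(z)=B(a,a)^{-1/2}\bigl(z-a\bigr)^{\,a}\quad\text{(quasi-inverse form)},\qquad
\det\phi_a'(z)=\frac{h(a,a)^{p_\Omega/2}}{h(z,a)^{p_\Omega}},
\]
up to a constant of modulus one, where $B$ is the Bergman operator. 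Equivalently, the identity $J_g(z)\overline{J_g(w)}\,h(gz,gw)^{-p_\Omega}=h(z,w)^{-p_\Omega}$ (transformation of the Bergman kernel) with $w=g^{-1}0$ gives $|J_g(z)|$ in terms of $h(z,g^{-1}0)^{-p_\Omega}$.

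\textbf{Holomorphic extension and nonvanishing.} The map $\phi_a$ is a rational map of $z$ whose denominator is a power of $h(z,a)$. For $a\in\Omega$ and $z\in\overline\Omega$ we have $h(z,a)\neq0$. To see this, note that the spectral norm of $a$ is $<1$ and that of $z$ is $\le1$, so $h(z,ta)\ne0$ for $|t|\le1$; this follows, for example, from \eqref{13} or from $h(z,w)=\det B(z,w)^{1/p_\Omega}$ with $B(z,w)$ invertible when $\|z\|\,\|w\|<1$. By compactness of $\overline\Omega$, $h(\cdot,a)$ is nonvanishing on a neighborhood $U$ of $\overline\Omega$. There $\phi_a$ and $J_g=c\,h(a,a)^{p_\Omega/2}h(z,a)^{-p_\Omega}$ are holomorphic, and $J_g$ does not vanish.

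\textbf{Complex powers.} Take $U$ to be a connected, simply connected neighborhood of $\overline\Omega$, for instance a slightly enlarged spectral ball $\{\|z\|<1+\delta\}$, which is convex. On such a $U$ the nonvanishing holomorphic function $J_g$ has a holomorphic logarithm. This logarithm is fixed on $\Omega$ by the branch that $\widetilde g$ specifies, which is defined by continuation along a path from the identity in $\widetilde G$. Since $\Omega\subset U$ is connected, that branch extends uniquely to $U$, and $J_{\widetilde g}^\lambda=\exp(\lambda\log J_{\widetilde g})$ extends as well. The extension is consistent: along a path $\widetilde g_t$ the logarithms vary continuously on $U$, and since they agree on $\Omega$ they agree with the continuation.

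\textbf{Uniformity near the identity (the main point).} For $\widetilde g$ in a small neighborhood of $e$ in $\widetilde G$, both $a=g^{-1}0$ and $k$ stay close to $0$ and to the identity, and both depend smoothly on $\widetilde g$. Then $\|a\|\le\eta$ small, so $h(z,a)\neq0$ on the fixed set $\{\|z\|<\eta^{-1}\}$, a common neighborhood of $\overline\Omega$. On this set $h(z,a)$ is close to $1$ uniformly, so $\log h(z,a)$ can be taken as the principal branch, and it is jointly smooth in $(z,a)$. The branch selected by $\widetilde g$ then differs from this principal choice by a constant, which is $0$ at the identity and hence $0$ throughout the connected neighborhood. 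This gives extensions of $J_{\widetilde g}^\lambda$ on a common neighborhood, depending smoothly on $\widetilde g$.

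The step requiring most care is keeping the branch bookkeeping consistent between the continuation in $\widetilde G$ and the continuation in $z$. Working with $\log J$ on a simply connected $U$ handles this.
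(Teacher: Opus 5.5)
Your argument is correct in substance, but it takes a different route from the paper. The paper uses no explicit formulas. It views $V$ as an affine chart of the compact dual and uses that each $g\in G$ extends to a holomorphic automorphism of the compact dual. From $g(\overline\Omega)=\overline\Omega$ it deduces that $g$ and $g^{-1}$ remain in the chart near $\overline\Omega$. Nonvanishing of the Jacobian, the common neighborhood near the identity, and smooth dependence on $\widetilde g$ then all come from the smooth holomorphic action on a compact manifold. The logarithm is chosen on a convex neighborhood exactly as you do.

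You instead stay inside the Jordan-theoretic toolkit. You use the factorization $g=k\circ\phi_a$ and the Bergman kernel transformation law. With $a=g^{-1}0$, that law gives $J_g(z)\overline{J_g(a)}=h(z,a)^{-p_\Omega}$, which determines $J_g$ itself and not only $|J_g|$, and it does so for every $g$ without any factorization. You also use the nonvanishing of $h(z,w)$ when $\|z\|\,\|w\|<1$ in the spectral norm. Your route gives an explicit convex neighborhood $\{\|z\|<\|a\|^{-1}\}$ on which the map, its Jacobian and all powers $J_{\widetilde g}^\lambda$ are defined by an explicit formula. Uniformity near the identity then reduces to $\|a\|$ being small. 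The price is heavier structure theory: Möbius transformations, $\Omega$ as the spectral unit ball, and the quasi-invertibility criterion. The paper's proof is shorter and formula-free.

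Two slips should be repaired, though neither affects the argument.

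First, your displayed formula for $\phi_a$ is wrong. On the minimal disk through $a=tc$ it gives $\zeta c\mapsto\frac{\zeta-t}{(1-t^2)(1-t\zeta+t^2)}\,c$, which sends the boundary point $c$ to $(1+t^3)^{-1}c\in\Omega$. The transvection taking $a$ to $0$ is $\phi_a(z)=-a+B(a,\bar a)^{1/2}z^{\bar a}$. Its denominator is indeed a power of $h(z,a)$, and you only use that fact and the Jacobian formula.

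Second, for $\|a\|\le\eta$ the function $h(z,a)$ is nonvanishing on $\{\|z\|<\eta^{-1}\}$ but not uniformly close to $1$ there. So the principal branch is not justified on that whole set. Either work on a fixed set such as $\{\|z\|<2\}$ after shrinking the neighborhood of the identity, or use the continuous logarithm of $h$ on the star-shaped set $\{(z,a):\|z\|\,\|a\|<1\}$, normalized by $\log h(z,0)=0$.
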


\begin{proof}
In the Harish--Chandra realization, $\Omega$ lies in an affine
chart of its compact dual. The action of $g$ extends to an
automorphism of the compact dual
\cite[Chapter~VIII, Corollary~7.15]{Helgason}.
Because $g(\overline\Omega)=\overline\Omega$, this action and
its inverse are holomorphic near $\overline\Omega$ in the same
affine chart. Thus the Jacobian is nonzero there after the
neighborhood is made smaller.

The closed domain $\overline\Omega$ is convex, so we can choose
a convex neighborhood within the region where the Jacobian is
nonzero. Any two holomorphic logarithms of the Jacobian on $\Omega$
differ by a constant in $2\pi i\mathbb Z$. We choose the
constant in the logarithm on this neighborhood so that its
restriction to $\Omega$ is the branch specified by
$\widetilde g$. Exponentiation then extends the chosen
complex powers.

The compact-dual action depends smoothly on the group element.
Compactness of $\overline\Omega$ gives a common neighborhood for
$\widetilde g$ near the identity. After shrinking it, the
Jacobians are uniformly close to $1$, so their logarithms and
complex powers also depend smoothly on $\widetilde g$.
\end{proof}

For $\widetilde g_1,\widetilde g_2\in\widetilde G$, with
projections $g_1,g_2$, the chain rule and the chosen branches give
\begin{equation}\label{23}
J_{\widetilde g_1\widetilde g_2}(z)^\kappa
=J_{\widetilde g_1}(g_2z)^\kappa
 J_{\widetilde g_2}(z)^\kappa,
\qquad z\in\overline\Omega,\quad \kappa\in\mathbb C.
\end{equation}
In particular,
$J_{\widetilde g^{-1}}(gz)^\kappa
=J_{\widetilde g}(z)^{-\kappa}$.
The generic norm transforms by
\begin{equation}\label{24}
h(gz,gw)
=J_{\widetilde g}(z)^{1/p_\Omega}h(z,w)
 \overline{J_{\widetilde g}(w)}^{\,1/p_\Omega},
\qquad z\in\Omega,\quad w\in\overline\Omega,
\end{equation}
for $g=\pi(\widetilde g)$; see
\cite[formulas~(3.21) and~(3.23)]{Koranyi}.
The normalized $K$-invariant measure on $S$ satisfies
\begin{equation}\label{25}
du(g\omega)
=|J_{\widetilde g}(\omega)|^{2n/(rp_\Omega)}\,du(\omega),
\qquad \omega\in S;
\end{equation}
see \cite[Section~1]{MisraUpmeierBoundary}.

The $\widetilde G$-action on $L^S_{\nu;s}$ induces an action
on its scalar representatives. With
$\nu=\theta-\sigma$ and $s=2n/r-\sigma-\theta$, it is
\begin{equation}\label{26}
(\pi_{\sigma,\theta}(\widetilde g)f)(\omega)
=J_{\widetilde g^{-1}}(\omega)^{(n/r-\sigma)/p_\Omega}
 \overline{J_{\widetilde g^{-1}}(\omega)}^{(n/r-\theta)/p_\Omega}
 f(g^{-1}\omega).
\end{equation}
The cocycle identity \eqref{23} shows that
$\pi_{\sigma,\theta}(\widetilde g^{-1})$ is its inverse.
Both operators are bounded on $L^2(S)$ by Lemma~\ref{28}
and the change-of-measure formula \eqref{25}.

Changing variables in \eqref{8} and using \eqref{24}--\eqref{26},
we obtain
\begin{equation}\label{27}
\bigl(P_{\sigma,\theta}(\pi_{\sigma,\theta}(\widetilde g)f)\bigr)(z)
=J_{\widetilde g^{-1}}(z)^{(\theta-\sigma)/p_\Omega}
 (P_{\sigma,\theta}f)(g^{-1}z).
\end{equation}
The right-hand side is the action $T_{\theta-\sigma}(\widetilde g)$
from \eqref{6} on $P_{\sigma,\theta}f$. We will use \eqref{27}
to apply calculations at the origin to transforms at other
points of $\Omega$. The next lemma shows that this action and
the $K$-type projections in \eqref{11} preserve the required
boundary regularity.

\begin{lemma}
\label{29}
Let $\sigma,\theta\in\mathbb C$, $f\in L^2(S)$, and
$F=P_{\sigma,\theta}f$. If $F\in C^m(\overline\Omega)$,
where $m\in\mathbb N_0$, then
$P_{\sigma,\theta}\pi_{\sigma,\theta}(\widetilde g)f$
and $P_{\sigma,\theta}\Pi_\tau^S f$ also belong to
$C^m(\overline\Omega)$ for every $\widetilde g\in\widetilde G$
and every $K$-type $\tau$. The same holds for
$C^{m,\varepsilon}$, $0<\varepsilon\le1$, and for
$\mathcal R^q$, $q>0$.
\end{lemma}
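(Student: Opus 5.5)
The plan is to treat the two operations separately and in each case write the new function explicitly in terms of $F$.

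\emph{The group action.} By \eqref{27},
\[
P_{\sigma,\theta}\pi_{\sigma,\theta}(\widetilde g)f(z)=J_{\widetilde g^{-1}}(z)^{(\theta-\sigma)/p_\Omega}F(g^{-1}z),
\qquad z\in\Omega .
\]
Lemma~\ref{28} applied to $\widetilde g^{-1}$ shows that $g^{-1}$ and the chosen power of its Jacobian extend holomorphically to an open neighborhood $U$ of $\overline\Omega$. Since $g^{-1}$ is a biholomorphism near $\overline\Omega$ and maps $\overline\Omega$ onto itself, I will shrink $U$ so that $g^{-1}(U)$ is contained in the neighborhood $W$ of $\overline\Omega$ on which the $C^m$ extension $\widetilde F$ of $F$ is defined. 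Then $J_{\widetilde g^{-1}}^{(\theta-\sigma)/p_\Omega}\cdot(\widetilde F\circ g^{-1})$ is defined on $U$ and agrees with the transform on $\Omega$.

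This extension is $C^m$ by the chain rule, because it is a smooth function times the composition of $\widetilde F$ with a smooth map. For $C^{m,\varepsilon}$ I would shrink $U$ to a relatively compact neighborhood, so that all derivatives of $g^{-1}$ and of the Jacobian factor are bounded on it. The $m$-th derivatives of the product are then sums of terms, each a smooth factor times $(D^k\widetilde F)\circ g^{-1}$ with $k\le m$. Hölder continuity of order $\varepsilon$ is preserved under composition with a Lipschitz map and under multiplication by $C^1$ functions. The only point that needs care is that Hölder norms are local, so one must work on a compact neighborhood. Since $\mathcal R^q$ is defined through \eqref{2} as a union of these classes, it is preserved as well.

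\emph{The $K$-type projection.} By \eqref{11},
\[
P_{\sigma,\theta}\Pi_\tau^Sf=\Pi_\tau^\Omega F=\dim\mathcal H_\tau\int_K\overline{\operatorname{tr}\tau(k)}\,F(k^{-1}z)\,dk .
\]
Here $K$ acts linearly and unitarily on $V$, so it preserves both $\overline\Omega$ and every ball around the origin. Choose $R>1$ with $\Omega\subset B_1$, the unit ball, and take the extension $\widetilde F$ to be defined on a $K$-invariant neighborhood. To get one, restrict to $\{z:\operatorname{dist}(z,\overline\Omega)<\delta\}$, which is $K$-invariant because $K$ acts isometrically. Then $\dim\mathcal H_\tau\int_K\overline{\operatorname{tr}\tau(k)}\,\widetilde F(k^{-1}z)\,dk$ extends $\Pi_\tau^\Omega F$.

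Differentiation under the integral sign is justified because $K$ is compact, $k$ acts linearly, and the derivatives of $\widetilde F$ are continuous on a compact $K$-invariant neighborhood. This gives $C^m$. The Hölder seminorm of each derivative is bounded by the $K$-uniform seminorm, since $|k^{-1}z-k^{-1}w|=|z-w|$, so $C^{m,\varepsilon}$ and hence $\mathcal R^q$ are preserved.

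\emph{Main obstacle.} I expect the only delicate step to be making the neighborhoods uniform, namely ensuring that $g^{-1}$ maps a neighborhood of $\overline\Omega$ into the domain $W$ of the given extension. This follows from continuity of the extended $g^{-1}$ and compactness of $\overline\Omega$: the set $\{z\in U: g^{-1}(z)\in W\}$ is an open set containing $\overline\Omega$.
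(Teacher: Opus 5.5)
Your proposal is correct and follows the same route as the paper: \eqref{27} together with Lemma~\ref{28} handles the $\widetilde G$-action, and \eqref{11} together with averaging over a $K$-invariant neighborhood handles the $K$-type projection; you additionally supply the neighborhood-shrinking and H\"older bookkeeping that the paper leaves implicit. The stray sentence choosing $R>1$ with $\Omega\subset B_1$ is never used, and $\Omega\subset B_1$ is in fact false for rank $r>1$ in the paper's normalization (a maximal tripotent lies in $\overline\Omega$ and has norm $\sqrt r$), so delete it.
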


\begin{proof}
Equation~\eqref{27} expresses the first transform as a
holomorphic, nonvanishing Jacobian factor times $F\circ g^{-1}$.
By Lemma~\ref{28}, the map $g^{-1}$ and the Jacobian
factor extend smoothly to a neighborhood of
$\overline\Omega$. Composition and multiplication
therefore preserve the stated $C^m$ and H\"older regularity.

For the projection, \eqref{11} gives
$P_{\sigma,\theta}\Pi_\tau^S f=\Pi_\tau^\Omega F$.
Choose a $K$-invariant neighborhood of $\overline\Omega$ on
which $F$ has an extension. Since $K$ is compact and acts
linearly, the functions $F\circ k^{-1}$ have uniform $C^m$
and, when applicable, $C^{m,\varepsilon}$ bounds on a smaller
such neighborhood. Integrating them against the character in
the definition of $\Pi_\tau^\Omega$ preserves these bounds.
The claim for $\mathcal R^q$ follows from its definition.
\end{proof}

\subsection{Covariant differential operators}

We use the spaces $\mathcal P_{(q)}$ to define the
differential operators in Theorem~\ref{3}. For
$q\in\mathbb N$, set
\[
\mathcal W_q=\{p^*:p\in\mathcal P_{(q)}\},
\qquad
\mathcal W_q^\vee=\operatorname{Hom}_{\mathbb C}
(\mathcal W_q,\mathbb C).
\]
The elements of $\mathcal W_q$ are polynomial symbols for
constant-coefficient differential operators.

\begin{definition} For
$F\in C^q(\Omega)$, define the $\mathcal W_q^\vee$-valued
function $\mathcal D_qF$ by
\[
(\mathcal D_qF(z))(p^*)=p^*(\partial)F(z),
\qquad p\in\mathcal P_{(q)}.
\]
\end{definition}

For
$L\in\mathbb N_0$, set
$\mathcal F_L=\bigoplus_{\alpha_1\le L}\mathcal P_\alpha$.
By \cite[Theorem~3.5]{UpmeierKinv}, the sum of the
remaining types in $ \operatorname{Pol}(V)$ is the polynomial ideal
\begin{equation}
\label{30}
\mathcal P_{(L+1)}\operatorname{Pol}(V)
:=\operatorname{span}\{pR:p\in\mathcal P_{(L+1)},
R\in\operatorname{Pol}(V)\}
=\bigoplus_{\alpha_1\ge L+1}\mathcal P_\alpha.
\end{equation}

\begin{lemma}\label{32}
For $L\in\mathbb N_0$ and $F\in\operatorname{Pol}(V)$,
$\mathcal D_{L+1}F$ vanishes on $\Omega$ if and only if
$F\in\mathcal F_L$.
\end{lemma}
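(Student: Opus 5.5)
Since $F$ is a polynomial, $\mathcal D_{L+1}F\equiv0$ on the open set $\Omega$ holds exactly when $p^*(\partial)F=0$ as a polynomial for every $p\in\mathcal P_{(L+1)}$. The plan is to recast this condition through Fischer duality and then read it off from the ideal decomposition \eqref{30}.

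First, for a fixed $p\in\mathcal P_{(L+1)}$ and any $R\in\operatorname{Pol}(V)$, Fischer adjointness gives
\[
\langle p^*(\partial)F,R\rangle_F=\langle F,pR\rangle_F .
\]
Because $\langle\cdot,\cdot\rangle_F$ is positive definite on $\operatorname{Pol}(V)$, we have $p^*(\partial)F=0$ if and only if $F$ is Fischer-orthogonal to $p\operatorname{Pol}(V)$. Letting $p$ range over $\mathcal P_{(L+1)}$, the condition $\mathcal D_{L+1}F\equiv0$ is therefore equivalent to $F\perp_F\mathcal P_{(L+1)}\operatorname{Pol}(V)$.

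Second, by \eqref{30} this ideal equals $\bigoplus_{\alpha_1\ge L+1}\mathcal P_\alpha$. The decomposition \eqref{10} is orthogonal for the Fischer product: the product is $K$-invariant, the types $\mathcal P_\alpha$ are pairwise inequivalent by multiplicity-freeness, and Schur's lemma then forces distinct types to be orthogonal. Hence the orthogonal complement of the ideal in $\operatorname{Pol}(V)$ is exactly $\bigoplus_{\alpha_1\le L}\mathcal P_\alpha=\mathcal F_L$. This complement statement takes place in $\operatorname{Pol}(V)$, which is an algebraic direct sum of finite-dimensional pieces, so no closure issues arise: $F$ has only finitely many nonzero components $F_\alpha$, and orthogonality to every $\mathcal P_\alpha$ with $\alpha_1\ge L+1$ kills exactly those components.

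Combining the two steps gives the equivalence. The only point requiring care is the Fischer-orthogonality of the $K$-types. It also follows directly from the fact that the Fischer kernel $e^{\langle z,w\rangle}$ is $K$-invariant and decomposes as $\sum_\alpha K^F_\alpha$ by \eqref{13}–\eqref{14}, so I do not expect it to be a real obstacle.
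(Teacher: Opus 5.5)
Your proposal is correct and follows the paper's proof: Fischer adjointness turns $\mathcal D_{L+1}F\equiv0$ into Fischer-orthogonality of $F$ to the ideal $\mathcal P_{(L+1)}\operatorname{Pol}(V)$, and \eqref{30} identifies the orthogonal complement as $\mathcal F_L$. Appealing to positive definiteness amounts to the paper's choice $R=p^*(\partial)F$. Your Schur-lemma argument for the Fischer-orthogonality of the $K$-types makes explicit a point the paper uses tacitly. The kernel-based alternative you mention is somewhat circular, since \eqref{14} already presupposes that orthogonality, so rely on the Schur-lemma argument.
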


\begin{proof}
By definition, $\mathcal D_{L+1}F\equiv0$ if and only if
$p^*(\partial)F=0$ for every $p\in\mathcal P_{(L+1)}$.
For such $p$ and any $R\in\operatorname{Pol}(V)$, Fischer
adjointness gives
\[
\langle pR,F\rangle_F
=\langle R,p^*(\partial)F\rangle_F.
\]
Consequently, $\mathcal D_{L+1}F\equiv0$ implies
$F\perp\mathcal P_{(L+1)}\operatorname{Pol}(V)$.
Conversely, suppose that
$F\perp\mathcal P_{(L+1)}\operatorname{Pol}(V)$.
For each $p\in\mathcal P_{(L+1)}$, the function
$p^*(\partial)F$ is a polynomial. Taking
$R=p^*(\partial)F$ in the displayed identity gives
\[
\|p^*(\partial)F\|_F^2
=\langle p\,p^*(\partial)F,F\rangle_F=0.
\]
Thus $p^*(\partial)F=0$ for every such $p$, which means
$\mathcal D_{L+1}F\equiv0$.
By \eqref{30}, the orthogonal complement of
$\mathcal P_{(L+1)}\operatorname{Pol}(V)$ is
$\bigoplus_{\alpha_1\le L}\mathcal P_\alpha=\mathcal F_L$. The proof is complete.
\end{proof}

For the transformation law below, we also need to express
$\mathcal F_L$ in terms of the generic norm.

\begin{lemma}
For $L\in\mathbb N_0$, the space $\mathcal F_L$ also has
the description
\begin{equation}
\label{31}
\mathcal F_L
=\operatorname{span}\{h(\,\cdot\,,w)^L:w\in\Omega\}.
\end{equation}
\end{lemma}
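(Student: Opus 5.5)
Both spaces are $K$-invariant, and I will prove the two inclusions separately.

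The first step is to expand the generator with the Faraut--Kor\'anyi formula \eqref{13} at $u=-L$:
\[
h(z,w)^L=\sum_\alpha(-L)_\alpha K_\alpha^F(z,w).
\]
Here $(-L)_\alpha=\prod_{\ell}\bigl(-L-\tfrac a2(\ell-1)\bigr)_{\alpha_\ell}$. The factor with $\ell=1$ is $(-L)_{\alpha_1}$, which vanishes exactly when $\alpha_1\ge L+1$. Since $h(z,w)$ is polynomial in $z$, this series is a finite sum. So $h(\cdot,w)^L=\sum_{\alpha_1\le L}(-L)_\alpha K_\alpha^F(\cdot,w)$ lies in $\mathcal F_L$, which gives the inclusion $\supseteq$.

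For $\subseteq$, I use the Fischer inner product and argue by orthogonality. Take $F\in\mathcal F_L$ that is Fischer-orthogonal to every $h(\cdot,w)^L$ with $w\in\Omega$. Write $F=\sum_{\alpha_1\le L}F_\alpha$ with $F_\alpha\in\mathcal P_\alpha$. Because $K_\alpha^F(\cdot,w)$ reproduces on $\mathcal P_\alpha$ and the types are mutually orthogonal, we get
\[
0=\langle F,h(\cdot,w)^L\rangle_F=\sum_{\alpha_1\le L}\overline{(-L)_\alpha}\,F_\alpha(w),\qquad w\in\Omega .
\]
The terms are homogeneous of different degrees $|\alpha|$, and for a fixed degree they lie in distinct $K$-types. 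The right side is a polynomial in $w$ vanishing on the open set $\Omega$, so it vanishes identically. Projecting onto each $K$-type via $\Pi_\tau^\Omega$ then gives $(-L)_\alpha F_\alpha=0$ for every $\alpha$.

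The main point is therefore to show $(-L)_\alpha\neq0$ whenever $\alpha_1\le L$. For $\ell=1$ the factor is $(-L)(-L+1)\cdots(-L+\alpha_1-1)$, which is nonzero because $\alpha_1\le L$. For $\ell\ge2$ the factor is $(-L-\tfrac a2(\ell-1))_{\alpha_\ell}$, and its entries are $-L-\tfrac a2(\ell-1)+i$ for $0\le i<\alpha_\ell\le\alpha_1\le L$. Each such entry is at most $-\tfrac a2(\ell-1)-1<0$, so none vanishes (here $a\ge0$). Hence all $F_\alpha=0$, so $F=0$, and the span of the generators is all of the finite-dimensional space $\mathcal F_L$.

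One detail still needs checking: that \eqref{13} holds for $u=-L$ as an identity of polynomials, not only as a convergent series. It is stated for every $u\in\mathbb C$. The series converges on $\Omega\times\Omega$, and only finitely many coefficients are nonzero, so this is immediate.
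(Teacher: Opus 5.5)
Your proposal is correct and follows essentially the same route as the paper. The paper also uses the expansion \eqref{13} at $u=-L$ to get $\operatorname{span}\{h(\cdot,w)^L\}\subset\mathcal F_L$, and then obtains equality from Fischer orthogonality, the reproducing property, and the nonvanishing of $(-L)_\alpha$ for $\alpha_1\le L$. Your explicit check that the factors with $\ell\ge2$ have only negative entries spells out a step the paper merely asserts.
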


\begin{proof}
Denote the span on the right of \eqref{31} by $W_L$.
Because $\alpha$ is a partition,
$(-L)_\alpha\ne0$ exactly when $\alpha_1\le L$.
Formula~\eqref{13} gives the finite expansion
\[
h(\,\cdot\,,w)^L
=\sum_{\alpha_1\le L}(-L)_\alpha
K_\alpha^F(\,\cdot\,,w),
\qquad w\in\Omega.
\]
Thus $W_L\subset\mathcal F_L$. Suppose that
$p=\sum_{\alpha_1\le L}p_\alpha\in\mathcal F_L$
is Fischer-orthogonal to $W_L$. By the reproducing
property of $K_\alpha^F$, for every $w\in\Omega$,
\[
0=\langle p,h(\,\cdot\,,w)^L\rangle_F
 =\sum_{\alpha_1\le L}(-L)_\alpha p_\alpha(w).
\]
The polynomial on the right vanishes on $\Omega$, hence
identically equals to $0$ on $V$. Its terms belong to distinct spaces
$\mathcal P_\alpha$, so each $(-L)_\alpha p_\alpha=0$.
All these coefficients are nonzero; thus $p=0$.
Since $\mathcal F_L$ is finite-dimensional,
$W_L=\mathcal F_L$, proving \eqref{31}.
\end{proof}

To describe how $\mathcal D_q$ changes under automorphisms,
we first check that their derivatives preserve its symbol
space $\mathcal W_q$. By Lemma~\ref{16}, this space is
spanned by $(\ell_\xi^q)^*$ with $\xi$ of rank one. For a
linear map $A:V\to V$,
\[
(\ell_\xi^q)^*(A^t\eta)=(\ell_{A\xi}^q)^*(\eta).
\]Hence, if both $A$ and $A^{-1}$ preserve rank-one elements,
substitution by $A^t$ is an automorphism of $\mathcal W_q$.
For $g\in G$, we will apply this with $A=(g^{-1})'(z)$, so we need to
verify this rank-preserving property for derivatives of
automorphisms. Kaup shows that $g'(z)$ belongs
to the structure group of $V$ for $g\in G$ and $z\in\Omega$
\cite[p.~249]{Kaup02}, and that the orbits of this group are
the sets of elements of each Jordan rank
\cite[p.~253]{Kaup02}. Applying this to $g^{-1}$ shows that
$(g^{-1})'(z)$ and its inverse preserve rank one. We can
therefore define an automorphism of $\mathcal W_q$ by
\[
(C_g(z)\varphi)(\eta)
=
\varphi\!\left([(g^{-1})'(z)]^t\eta\right),
\qquad \varphi\in\mathcal W_q.
\]
Its dual map $B_g(z)$ acts on $\mathcal W_q^\vee$ by
\begin{equation}\label{33}
(B_g(z)\lambda)(\varphi)=\lambda(C_g(z)\varphi),
\qquad \lambda\in\mathcal W_q^\vee,
\quad \varphi\in\mathcal W_q.
\end{equation}
Both maps depend holomorphically on $z$, since
$(g^{-1})'(z)$ is holomorphic and $\mathcal W_q$ is
finite-dimensional. The chain rule gives
\[
B_{g_1g_2}(z)=B_{g_1}(z)B_{g_2}(g_1^{-1}z),
\qquad g_1,g_2\in G.
\]
Consequently, the product holomorphic bundle
$\mathcal V_q=\Omega\times\mathcal W_q^\vee$ has a $G$-action
\[
g\cdot(z,\lambda)=(gz,B_g(gz)\lambda).
\]
We use this bundle as the target of the differential operators
below.

The spanning identity \eqref{31} makes the following
transformation law for $\mathcal D_{L+1}$ possible.

\begin{proposition}
Let $L\in\mathbb N_0$. For $\widetilde g\in\widetilde G$, put
$g=\pi(\widetilde g)$.  Then, for every $F\in C^{L+1}(\Omega)$,
the following identity holds in $\mathcal W_{L+1}^\vee$ at each $z\in\Omega$:
\begin{equation}\label{34}
\mathcal D_{L+1}\!\left[
J_{\widetilde g^{-1}}^{-L/p_\Omega}
(F\circ g^{-1})
\right](z)
=
J_{\widetilde g^{-1}}(z)^{-L/p_\Omega}
B_g(z)
(\mathcal D_{L+1}F)(g^{-1}z).
\end{equation}
\end{proposition}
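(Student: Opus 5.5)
The plan is to split the left side of \eqref{34} into a top-order part, which gives the right side, and a lower-order part, which is shown to vanish using the description \eqref{31} of $\mathcal F_L$. Write $w=g^{-1}z$, $A=(g^{-1})'(z)$ and $\Phi_g F=J_{\widetilde g^{-1}}^{-L/p_\Omega}(F\circ g^{-1})$.

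First, $\mathcal D_{L+1}$ involves only holomorphic derivatives $\partial^\gamma$ with $|\gamma|=L+1$. Also, $g^{-1}$ and the Jacobian factor are holomorphic, with the branch fixed by Lemma~\ref{28}. So by the chain rule and the Leibniz rule, for each $p\in\mathcal P_{(L+1)}$ the quantity $p^*(\partial)[\Phi_gF](z)$ is a linear combination of the holomorphic derivatives $\partial^\beta F(w)$ with $|\beta|\le L+1$. The coefficients depend only on $g$ and $z$, not on $F$. Hence both sides of \eqref{34} are linear functionals of the holomorphic $(L+1)$-jet of $F$ at $w$. I will split the left side as $\mathrm{Top}(F)+E(F)$:
\begin{itemize}
\item $\mathrm{Top}(F)$ collects the terms in which all $L+1$ derivatives fall on $F$;
\item $E(F)$ depends only on $\partial^\beta F(w)$ with $|\beta|\le L$.
\end{itemize}

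Next I identify $\mathrm{Top}$. For $\xi$ of rank one, $\partial_\xi$ applied to $F\circ g^{-1}$ has leading part $(\partial_{A\xi}F)\circ g^{-1}$. Iterating gives that the top part of $\partial_\xi^{L+1}\Phi_gF(z)$ is $J_{\widetilde g^{-1}}(z)^{-L/p_\Omega}\,\partial_{A\xi}^{L+1}F(w)$. By Lemma~\ref{16}, the operators $\partial_\xi^{L+1}$ span $\{p^*(\partial):p\in\mathcal P_{(L+1)}\}$. Since $A$ preserves rank one (Kaup), $A\xi$ is again rank one, and in symbol language this reads $(\ell_\xi^{L+1})^*(A^t\eta)=(\ell_{A\xi}^{L+1})^*(\eta)$. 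So $\mathrm{Top}(F)$ is exactly $J_{\widetilde g^{-1}}(z)^{-L/p_\Omega}B_g(z)(\mathcal D_{L+1}F)(w)$, by the definition \eqref{33}. The step I expect to need the most care is this bookkeeping: matching the transpose convention in $C_g$ with the chain rule for $\partial_\xi$, and checking that the top-order identity holds on all of $\mathcal W_{L+1}$ by linearity from the rank-one spanning set.

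It remains to show $E\equiv0$. Given $F$, let $P$ be a holomorphic polynomial of degree $\le L$ with the same holomorphic $L$-jet at $w$ as $F$. Every $\alpha$ with $|\alpha|\le L$ has $\alpha_1\le L$, so $P\in\mathcal F_L$, and therefore $E(F)=E(P)$.

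For $P$, the right side of \eqref{34} vanishes by Lemma~\ref{32}, and $\mathrm{Top}(P)=0$ because $P$ has degree $\le L$. So $E(P)$ equals the whole left side $\mathcal D_{L+1}[\Phi_gP](z)$. To see that this vanishes, write $P$, by \eqref{31}, as a combination of functions $h(\cdot,v)^L$. By \eqref{24},
\[
J_{\widetilde g^{-1}}(z)^{-L/p_\Omega}\,h(g^{-1}z,g^{-1}v)^L=\overline{J_{\widetilde g^{-1}}(v)}^{\,L/p_\Omega}\,h(z,v)^L .
\]
As $v$ runs over $\Omega$, so does $g^{-1}v$. Hence $\Phi_g$ maps $\mathcal F_L$ (restricted to $\Omega$) into $\mathcal F_L$. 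Lemma~\ref{32} then gives $\mathcal D_{L+1}[\Phi_gP]\equiv0$, so $E(F)=E(P)=0$.

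Combining the two parts gives \eqref{34} at every $z\in\Omega$.
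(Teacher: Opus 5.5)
Your proposal is correct and follows the paper's proof essentially step by step. You match the top-order part with $J_{\widetilde g^{-1}}(z)^{-L/p_\Omega}B_g(z)(\mathcal D_{L+1}F)(g^{-1}z)$ on the rank-one generators $(\ell_\xi^{L+1})^*$, and you kill the lower-order remainder by passing to the degree-$\le L$ Taylor polynomial at $g^{-1}z$, which lies in $\mathcal F_L$, using the invariance of $\mathcal F_L$ under the weighted pullback (from \eqref{24} and \eqref{31}) together with Lemma~\ref{32}.
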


\begin{proof}
Fix $\widetilde g\in\widetilde G$ and $z\in\Omega$, and put
\(y=g^{-1}z\) and \(A=(g^{-1})'(z)\).
By Lemma~\ref{16}, $\mathcal W_{L+1}$ is spanned by
$\varphi_\xi=(\ell_\xi^{L+1})^*$ with $\xi$ of rank one.
The definition of $\mathcal D_{L+1}$ gives
\[
\bigl(\mathcal D_{L+1}F(y)\bigr)(\varphi_\xi)
=\partial_\xi^{L+1}F(y).
\]
The product and chain rules give
\[
\partial_\xi^{L+1}\!\left[
J_{\widetilde g^{-1}}^{-L/p_\Omega}(F\circ g^{-1})
\right](z)
=J_{\widetilde g^{-1}}(z)^{-L/p_\Omega}
\partial_{A\xi}^{L+1}F(y)+R_{\xi,z}(F),
\]
where $R_{\xi,z}(F)$ depends only on derivatives of $F$ at $y$
of order at most $L$. Since
$C_g(z)\varphi_\xi=\varphi_{A\xi}$, the first term on the right is
\[
J_{\widetilde g^{-1}}(z)^{-L/p_\Omega}
\bigl(B_g(z)\mathcal D_{L+1}F(y)\bigr)(\varphi_\xi).
\]
The $\varphi_\xi$ span $\mathcal W_{L+1}$, so the difference
in \eqref{34} involves only $\partial^\gamma F(y)$,
$|\gamma|\le L$.

The space $\mathcal F_L$ is invariant under \(F\longmapsto J_{\widetilde g^{-1}}^{-L/p_\Omega} (F\circ g^{-1})\).
Indeed, by \eqref{24},
\[
J_{\widetilde g^{-1}}(z)^{-L/p_\Omega}
h(g^{-1}z,w)^L
=
\overline{
J_{\widetilde g^{-1}}(gw)
}^{\,L/p_\Omega}
h(z,gw)^L,
\]
and the assertion follows from \eqref{31}.

For $P\in\mathcal F_L$, the invariance just proved shows
that its weighted pullback also belongs to $\mathcal F_L$.
Lemma~\ref{32} therefore makes both sides of \eqref{34}
zero. Hence $R_{\xi,z}(P)=0$ for every $P\in\mathcal F_L$.

For a general $F$, define
\[
P_y(w)=\sum_{|\gamma|\le L}
\frac{\partial^\gamma F(y)}{\gamma!}(w-y)^\gamma.
\]
This polynomial has total degree at most $L$, so
$P_y\in\mathcal F_L$. By construction,
\[
\partial^\gamma P_y(y)=\partial^\gamma F(y),
\qquad |\gamma|\le L.
\]
Since $R_{\xi,z}$ depends only on these derivatives,
\[
R_{\xi,z}(F)=R_{\xi,z}(P_y)=0.
\]
Thus \eqref{34} holds after evaluation on every
$\varphi_\xi$. They span $\mathcal W_{L+1}$,
so \eqref{34} holds in $\mathcal W_{L+1}^\vee$.
\end{proof}

The covariance in \eqref{34} lets us check a differential
equation at the origin after moving the transform by
$\widetilde G$.
Since $\mathcal W_{L+1}^\vee$ is a fixed vector space,
we apply $\bar\partial$ componentwise to
$\mathcal D_{L+1}F$. The result is a
$\mathcal W_{L+1}^\vee$-valued $(0,1)$-form.

\begin{lemma}
\label{35}
Let $L=\sigma-\frac nr\in\mathbb N_0$ and
$F=P_{\sigma,\frac nr}f$ for $f\in L^2(S)$.
For $\widetilde g\in\widetilde G$, put
$F_{\widetilde g}=P_{\sigma,\frac nr}
(\pi_{\sigma,\frac nr}(\widetilde g)f)$.
If $\bigl(\bar\partial(\mathcal D_{L+1}F_{\widetilde g})\bigr)_0=0$
for every $\widetilde g$, then
$\bar\partial(\mathcal D_{L+1}F)=0$ on $\Omega$.
\end{lemma}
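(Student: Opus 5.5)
Fix an arbitrary point $z_0\in\Omega$. The plan is to move the assumed vanishing at the origin to $z_0$ by means of the covariance formulas \eqref{27} and \eqref{34}.

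Since $G$ acts transitively on $\Omega$, choose $\widetilde g\in\widetilde G$ with $g0=z_0$, where $g=\pi(\widetilde g)$. We use $\widetilde g^{-1}$ as the group element. Here $\theta=\frac nr$ and $\theta-\sigma=-L$, so \eqref{27} gives
\[
F_{\widetilde g^{-1}}(z)
=J_{\widetilde g}(z)^{-L/p_\Omega}F(gz).
\]
This is exactly the weighted pullback appearing in \eqref{34}, with $g$ there replaced by $g^{-1}$.

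Next apply \eqref{34} with $g^{-1}$ in place of $g$. It gives
\[
\mathcal D_{L+1}F_{\widetilde g^{-1}}(z)
=J_{\widetilde g}(z)^{-L/p_\Omega}\,
B_{g^{-1}}(z)\,(\mathcal D_{L+1}F)(gz).
\]
The scalar factor $J_{\widetilde g}(z)^{-L/p_\Omega}$ is holomorphic and nonvanishing on $\Omega$. The endomorphism $B_{g^{-1}}(z)$ of the fixed finite-dimensional space $\mathcal W_{L+1}^\vee$ depends holomorphically on $z$ and is invertible, because $C_{g^{-1}}(z)$ is an automorphism of $\mathcal W_{L+1}$.

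Apply $\bar\partial$ componentwise, which the hypothesis $F\in C^\infty(\Omega)$ allows. Holomorphic factors pass through $\bar\partial$ by the Leibniz rule, so
\[
\bar\partial\bigl(\mathcal D_{L+1}F_{\widetilde g^{-1}}\bigr)(z)
=J_{\widetilde g}(z)^{-L/p_\Omega}\,B_{g^{-1}}(z)\,
\bar\partial\bigl((\mathcal D_{L+1}F)\circ g\bigr)(z).
\]
Because $g$ is holomorphic, $\bar\partial(H\circ g)(z)=(\bar\partial H)(gz)\circ\overline{g'(z)}$, where $\overline{g'(z)}$ is invertible. The hypothesis says the left side vanishes at $z=0$. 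Since the prefactors are invertible, this gives $(\bar\partial\mathcal D_{L+1}F)(z_0)\circ\overline{g'(0)}=0$. Invertibility of $g'(0)$ then yields $(\bar\partial\mathcal D_{L+1}F)(z_0)=0$.

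As $z_0$ was arbitrary, $\bar\partial(\mathcal D_{L+1}F)=0$ on $\Omega$. The main point to check carefully is the bookkeeping of $g$ versus $g^{-1}$ between \eqref{27} and \eqref{34}, including the matching of the weights $-L/p_\Omega$ and $(\theta-\sigma)/p_\Omega$. Everything else is the Leibniz and chain rules for $\bar\partial$ with holomorphic factors.
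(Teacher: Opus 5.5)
Your proof is correct and is the paper's argument, merely parametrized by $\widetilde g^{-1}$ with $g0=z_0$ instead of by $\widetilde g$ with $g^{-1}0=z_0$: you combine \eqref{27} with \eqref{34}, pass $\bar\partial$ through the holomorphic invertible factors and $\overline{g'(0)}$, and conclude by transitivity. One small slip: smoothness of $F$ is not a hypothesis of the lemma; it follows from differentiating under the integral in \eqref{8}, as the paper notes.
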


\begin{proof}
Write $g=\pi(\widetilde g)$. By \eqref{27} and
$L=\sigma-\frac nr$,
\[
F_{\widetilde g}(z)
=
J_{\widetilde g^{-1}}(z)^{-L/p_\Omega}
F(g^{-1}z).
\]
Equation~\eqref{34} therefore gives
\[
\mathcal D_{L+1}F_{\widetilde g}(z)
=
J_{\widetilde g^{-1}}(z)^{-L/p_\Omega}
B_g(z)
\bigl(\mathcal D_{L+1}F\bigr)(g^{-1}z).
\]
Differentiating under the integral in \eqref{8} shows
that the transforms are smooth on $\Omega$. The Jacobian
factor and $B_g(z)$ are holomorphic in $z$.
Taking an antiholomorphic derivative at $0$ and using the
hypothesis gives
\[
 0
=
J_{\widetilde g^{-1}}(0)^{-L/p_\Omega}
B_g(0)
\circ
\bigl(\bar\partial(\mathcal D_{L+1}F)\bigr)_{g^{-1}0}
\circ
\overline{(g^{-1})'(0)}.
\]
The Jacobian factor is nonzero, and both
$B_g(0)$ and $(g^{-1})'(0)$ are invertible. Hence
$\bigl(\bar\partial(\mathcal D_{L+1}F)\bigr)_{g^{-1}0}=0$.
Since $G$ acts transitively on $\Omega$, the result follows.
\end{proof}

Equation~\eqref{34} uses the Jacobian exponent $-L/p_\Omega$.
To obtain the exponents $-(L+j_0)/p_\Omega$ needed in
Theorem~\ref{3}, we insert a power of the generic norm.

\begin{definition}If $L,j_0\in\mathbb N_0$ and $F$ is the scalar coefficient
of a $C^{L+1}$ section of $L^\Omega_{-(L+j_0)}$, define
\begin{equation}\label{36}
\mathbb D_{L,j_0}F
=
h(z,z)^{j_0}
\mathcal D_{L+1}\bigl(h(z,z)^{-j_0}F\bigr).
\end{equation}
\end{definition}

\begin{proposition}\label{37}
Let $L,j_0\in\mathbb N_0$, and let $F$ be the scalar representative
of a $C^{L+1}$ section of
$L^\Omega_{-(L+j_0)}$. For $\widetilde g\in\widetilde G$,
put $g=\pi(\widetilde g)$ and
$F_{\widetilde g}=T_{-(L+j_0)}(\widetilde g)F$, as in
\eqref{6}. Then, as an identity in $\mathcal W_{L+1}^\vee$,
\begin{equation}\label{38}
\mathbb D_{L,j_0}F_{\widetilde g}(z)
=
J_{\widetilde g^{-1}}(z)^{-(L+j_0)/p_\Omega}
B_g(z)
(\mathbb D_{L,j_0}F)(g^{-1}z).
\end{equation}
\end{proposition}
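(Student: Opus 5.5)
The plan is to reduce \eqref{38} to the already proved transformation law \eqref{34} by conjugating with the power $h(z,z)^{-j_0}$. The key input is the transformation rule \eqref{24} with $w=z$:
\[
h(g^{-1}z,g^{-1}z)=|J_{\widetilde g^{-1}}(z)|^{2/p_\Omega}h(z,z),
\]
which holds for $z\in\Omega$. Since $j_0\in\mathbb N_0$, the integer power gives
\[
h(g^{-1}z,g^{-1}z)^{-j_0}
=J_{\widetilde g^{-1}}(z)^{-j_0/p_\Omega}\,
\overline{J_{\widetilde g^{-1}}(z)}^{\,-j_0/p_\Omega}\,
h(z,z)^{-j_0}.
\]
There is no branch issue here: the product of a power and its conjugate is $|J|^{-2j_0/p_\Omega}$, and the branches are specified by $\widetilde g$ anyway.

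Set $G:=h^{-j_0}F$, meaning $G(z)=h(z,z)^{-j_0}F(z)$. By \eqref{6} with $\nu=-(L+j_0)$,
\[
F_{\widetilde g}(z)=J_{\widetilde g^{-1}}(z)^{-(L+j_0)/p_\Omega}F(g^{-1}z).
\]
Multiplying by $h(z,z)^{-j_0}$ and using the identity above in the form $h(z,z)^{-j_0}=J^{j_0/p_\Omega}\overline J^{\,j_0/p_\Omega}h(g^{-1}z,g^{-1}z)^{-j_0}$, with $J=J_{\widetilde g^{-1}}(z)$, gives
\[
h(z,z)^{-j_0}F_{\widetilde g}(z)
=\overline{J_{\widetilde g^{-1}}(z)}^{\,j_0/p_\Omega}\;
J_{\widetilde g^{-1}}(z)^{-L/p_\Omega}\,G(g^{-1}z).
\]
The holomorphic Jacobian exponents combine by the cocycle/branch conventions, since $-(L+j_0)+j_0=-L$.

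Next apply $\mathcal D_{L+1}$. This operator involves only holomorphic derivatives $\partial^\gamma$, and the factor $\overline{J_{\widetilde g^{-1}}(z)}^{\,j_0/p_\Omega}$ is antiholomorphic. It is therefore annihilated by every $\partial_j$ and can be pulled out of $\mathcal D_{L+1}$ as a scalar. The remaining factor is exactly the weighted pullback in \eqref{34}, applied to $G\in C^{L+1}(\Omega)$; this regularity holds because $h(z,z)^{-j_0}$ is smooth and positive on $\Omega$. Hence
\[
\mathcal D_{L+1}\bigl(h^{-j_0}F_{\widetilde g}\bigr)(z)
=\overline{J}^{\,j_0/p_\Omega}J^{-L/p_\Omega}B_g(z)\,(\mathcal D_{L+1}G)(g^{-1}z).
\]

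Finally multiply by $h(z,z)^{j_0}$ and rewrite it as $J^{-j_0/p_\Omega}\overline J^{\,-j_0/p_\Omega}h(g^{-1}z,g^{-1}z)^{j_0}$. The antiholomorphic factors cancel, and $h(g^{-1}z,g^{-1}z)^{j_0}$ is a scalar that commutes with the linear map $B_g(z)$. This leaves
\[
J_{\widetilde g^{-1}}(z)^{-(L+j_0)/p_\Omega}B_g(z)\bigl(h^{j_0}\mathcal D_{L+1}G\bigr)(g^{-1}z),
\]
which is \eqref{38} by definition \eqref{36}.

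The only delicate step is bookkeeping of the complex powers. One must check that $J^{-(L+j_0)/p_\Omega}J^{j_0/p_\Omega}=J^{-L/p_\Omega}$ for the branches specified by $\widetilde g$. This holds because each branch is $\exp(\kappa\log J)$ for a single fixed logarithm determined by $\widetilde g$, so the exponents add. The antiholomorphic factor likewise uses the conjugate of that same logarithm, so that $J^{a}\overline J^{a}=|J|^{2a}$ is consistent with \eqref{24}.
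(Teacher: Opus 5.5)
Your proposal is correct and follows the paper's proof essentially step for step: you conjugate by $h(z,z)^{-j_0}$ using the diagonal case of \eqref{24}, pull the antiholomorphic factor $\overline{J_{\widetilde g^{-1}}(z)}^{\,j_0/p_\Omega}$ through $\mathcal D_{L+1}$, apply \eqref{34} to $h(z,z)^{-j_0}F$, and then undo the conjugation with \eqref{24} again. The only blemish is cosmetic: calling $h(z,z)^{-j_0}F$ by the letter $G$ clashes with the group $G$, and the paper writes $H$ for it.
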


\begin{proof}
Put \(H(z)=h(z,z)^{-j_0}F(z)\) and
\(y=g^{-1}z\).
The diagonal case of \eqref{24} gives
\[
h(z,z)^{-j_0}F_{\widetilde g}(z)
=
\overline{J_{\widetilde g^{-1}}(z)}^{\,j_0/p_\Omega}
J_{\widetilde g^{-1}}(z)^{-L/p_\Omega}
H(g^{-1}z).
\]
The conjugate Jacobian factor is antiholomorphic. Thus, for every
$\varphi\in\mathcal W_{L+1}$, it passes through the constant-coefficient
operator $\varphi(\partial)$ without being differentiated. Applying
\eqref{34} to $H$ therefore gives the following identity in
$\mathcal W_{L+1}^\vee$:
\begin{equation}\label{380}
\begin{aligned}
\mathcal D_{L+1}
\bigl(h(z,z)^{-j_0}F_{\widetilde g}\bigr)(z)
={}&
\overline{J_{\widetilde g^{-1}}(z)}^{\,j_0/p_\Omega}
J_{\widetilde g^{-1}}(z)^{-L/p_\Omega}
B_g(z)(\mathcal D_{L+1}H)(y).
\end{aligned}
\end{equation}
Again by the diagonal case of \eqref{24},
\begin{equation}\label{381}
h(z,z)^{j_0}
\overline{J_{\widetilde g^{-1}}(z)}^{\,j_0/p_\Omega}
=
J_{\widetilde g^{-1}}(z)^{-j_0/p_\Omega}
h(g^{-1}z,g^{-1}z)^{j_0}.
\end{equation}
Multiplying \eqref{380} by $h(z,z)^{j_0}$, then using
\eqref{381} and \eqref{36}
gives \eqref{38}.
\end{proof}

Formula~\eqref{36} uses derivatives of order at most $L+1$.
Its transformation law \eqref{38}, together with the action
on $\mathcal V_{L+1}$ defined above, shows that it is the
scalar expression of a differential operator between bundles:
\[
\mathbb D_{L,j_0}:
C^{L+1}(\Omega,L^\Omega_{-(L+j_0)})
\longrightarrow
C^0(\Omega,L^\Omega_{-(L+j_0)}\otimes\mathcal V_{L+1}).
\]

The target bundle is holomorphic. For smooth $F$, its
Dolbeault operator $\bar\partial$ can therefore act on
$\mathbb D_{L,j_0}F$; in the Harish--Chandra trivialization,
it differentiates each component as above. In particular,
$\mathbb D_{L,0}=\mathcal D_{L+1}$ in this trivialization,
so the equation in Lemma~\ref{35} is the $j_0=0$ case.

\section{Differential equations and parameter analysis}\label{39}

We first record the differential equations satisfied by the Poisson
transform. The rank-one formulas below will be used to determine
the possible parameters.

\subsection{Differential identities and rank-one formulas}
Let \begin{equation}\label{385}B(z,\bar w)=I-D(z,\bar w)+Q(z)Q(\bar w)\end{equation}
be the Bergman operator.  Formula~(22) of \cite{KZ} gives 
\begin{equation}\label{386}-\partial_i\bar\partial_j\log h(z,z) = \bigl(B(z,\bar z)^{-1}\bigr)_{ji}.\end{equation}
Write
\begin{equation}\label{387}
b^{i\bar j}(z)=\bigl(B(z,\bar z)\bigr)_{ij},
\qquad
\Delta_\Omega F
=
\sum_{i,j}
b^{i\bar j}(z)\partial_i\bar\partial_jF.
\end{equation}

\begin{lemma}\label{40}
Let $\sigma,\theta\in\mathbb C$, let $f\in L^2(S)$, and put \(F=P_{\sigma,\theta}f\).
Then
\begin{equation}\label{41}
\Delta_\Omega F
+(\sigma-\theta)\sum_j
\overline{(z-Q(z)\bar z)_j}\,\bar\partial_jF
+r\sigma\left(\frac nr-\theta\right)F
=0.
\end{equation}
\end{lemma}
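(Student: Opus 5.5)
Since $F=P_{\sigma,\theta}f$ is an integral of the kernel against $f(\omega)\,du(\omega)$, and differentiation under the integral sign is justified on $\Omega$, the plan is to prove \eqref{41} for the kernel itself. For each fixed $\omega\in S$ we show that
\[
\Phi_\omega(z)=h(z,z)^\sigma h(z,\omega)^{-\theta}h(\omega,z)^{-\sigma}
\]
satisfies the equation. Equivariance reduces this to a computation at one point. Write $\mathcal L$ for the operator in \eqref{41}. By \eqref{27} the family $P_{\sigma,\theta}$ intertwines $\pi_{\sigma,\theta}$ with $T_{\theta-\sigma}$. We first check that $\mathcal L$ commutes with $T_{\theta-\sigma}(\widetilde g)$. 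Then, for fixed $\omega$, the function $\mathcal L\Phi_\omega$ satisfies the same transformation law as $\Phi_\omega$, and it suffices to evaluate $\mathcal L\Phi_\omega$ at $z=0$ for all $\omega\in S$.

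For the covariance of $\mathcal L$, use the conjugation $F=h(z,z)^{\sigma}H$. The Bergman Laplacian $\Delta_\Omega$ is $G$-invariant, because \eqref{386} identifies $b^{i\bar j}$ with the inverse of the Bergman metric. Conjugating $\Delta_\Omega$ by powers of $h(z,z)$ and of holomorphic and antiholomorphic Jacobian factors then produces first-order terms. These terms come from \eqref{24}, and from $\partial_j\log h(z,z)=-(B(z,\bar z)^{-1}(\bar z-Q(\bar z)z))_j$ type identities, that is, the Jordan-theoretic formula for the gradient of $\log h$. They combine into the drift $\sum_j\overline{(z-Q(z)\bar z)_j}\bar\partial_j$. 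This is the standard form of the weighted Laplacian on $L_\nu^\Omega$, and I would cite the corresponding invariance statement in \cite{KZ,Shimeno} rather than redo it. This is the main obstacle: verifying that the drift coefficient is exactly $\sigma-\theta$ and that no holomorphic first-order term survives. I would first try to check this directly on the function $h(z,w)^{-\theta}h(w,z)^{-\sigma}h(z,z)^{\sigma}$.

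The direct check goes as follows. Take logarithmic derivatives. Let $\mathcal L$ act on $e^{\psi}$ with
\[
\psi=\sigma\log h(z,z)-\theta\log h(z,\omega)-\sigma\log h(\omega,z).
\]
Since $\log h(z,\omega)$ is holomorphic in $z$, we have $\partial_i\bar\partial_j\psi=\sigma\,\partial_i\bar\partial_j\log h(z,z)$. By \eqref{386}, the contraction with $b^{i\bar j}$ gives $-\sigma n$. The remaining terms are $b^{i\bar j}\partial_i\psi\,\bar\partial_j\psi$ and the drift term. Evaluated at $z=0$, where $B=I$, $\partial_i\log h(z,w)|_{0}=-\bar w_i$ and $\bar\partial_j\log h(z,z)|_0=0$, they give
\[
\sum_j \theta\bar\omega_j\cdot\sigma\omega_j=\sigma\theta\|\omega\|^2 .
\]
The drift vanishes at $0$. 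For $\omega\in S$ we have $\|\omega\|^2=r$ in the normalization $\|c\|=1$ for minimal tripotents, because $\omega$ is a sum of $r$ orthogonal minimal tripotents. Hence
\[
\mathcal L\Phi_\omega(0)=-\sigma n+r\sigma\theta+r\sigma\Bigl(\frac nr-\theta\Bigr)=0 .
\]

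Finally, combine the two steps. Covariance transports the vanishing at $0$ to every point. Indeed, for $z=g0$ we have $\mathcal L\Phi_\omega(z)$ equal to a nonzero Jacobian factor times $\mathcal L\Phi_{g^{-1}\omega}(0)$, by \eqref{24} applied to the kernel together with \eqref{25}. Integrating against $f$ then yields \eqref{41}.
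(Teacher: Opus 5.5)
Your proof is correct, but it takes a different route from the paper. The paper imports the equation in divergence form, $h^{\sigma-\theta}\sum_{i,j}b^{i\bar j}\partial_i\bigl(h^{\theta-\sigma}\bar\partial_jF\bigr)+r\sigma(\frac nr-\theta)F=0$, from Kor\'anyi's Theorems~6.2--6.3. It then only expands this into \eqref{41} via $\sum_i b^{i\bar j}\partial_i\log h(z,z)=-\overline{(z-Q(z)\bar z)_j}$. Its constant term therefore rests on Kor\'anyi's formulas and on translating his parameters into $(\sigma,\theta)$.

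You compute that constant yourself. Your check at $z=0$, giving $-\sigma n+\sigma\theta\|\omega\|^2+r\sigma(\frac nr-\theta)=0$ with $\|\omega\|^2=r$, is right. The transport also works, because \eqref{24} alone yields $\Phi_{g\omega}=c\,T_{\theta-\sigma}(\widetilde g)\Phi_\omega$ with a nonzero constant $c=c_{\widetilde g}(\omega)$; \eqref{25} is not needed.

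You do misplace the difficulty, though. Since $z-Q(z)\bar z$ vanishes at $0$, the origin check cannot see the drift coefficient; that coefficient is fixed entirely by the covariance step. That step needs no citation if you run the paper's expansion backwards:
\begin{itemize}
\item The same gradient identity writes $\mathcal L-r\sigma(\frac nr-\theta)$ as $h^{\sigma-\theta}\sum_{i,j}b^{i\bar j}\partial_i\bigl(h^{\theta-\sigma}\bar\partial_j\,\cdot\,\bigr)$.
\item By \eqref{24}, the holomorphic Jacobian factor of $T_{\theta-\sigma}(\widetilde g)$ passes through $\bar\partial_j$ and cancels against the holomorphic factor of $h^{\theta-\sigma}$. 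The leftover antiholomorphic factor passes through $\partial_i$. The factors of $h^{\sigma-\theta}$ then restore exactly the multiplier of $T_{\theta-\sigma}(\widetilde g)$.
\item Finally, $\sum_{i,j}b^{i\bar j}\partial_i(\Psi\,\bar\partial_ju)=\Psi\,\Delta_\Omega u+\sum_{i,j}b^{i\bar j}\partial_i\Psi\,\bar\partial_ju$. This is built from $\Delta_\Omega$ and from the pairing of $\partial\Psi$ with $\bar\partial u$ through the inverse Bergman metric, and both are $G$-invariant by \eqref{386}.
\end{itemize}
This shows directly that the coefficient must be $\sigma-\theta$ and that no holomorphic first-order term appears.
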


\begin{proof}
Combining \cite[Theorems~6.2 and~6.3, formulas~(6.20) and~(6.22)]{Koranyi}
and using $\ell=\theta-\sigma$ and
$s=2n/r-\sigma-\theta$, we obtain, in the scalar trivialization,
\begin{equation}\label{388}
h(z,z)^{\sigma-\theta}
\sum_{i,j}b^{i\bar j}(z)
\partial_i\left(
h(z,z)^{-(\sigma-\theta)}
\bar\partial_jF
\right)
+
r\sigma\left(\frac nr-\theta\right)F
=0.
\end{equation}
Taking the complex conjugate of \cite[formula~(46)]{EYZ}
and using our notation for the Bergman operator gives
\[
\bar\partial(-\log h(z,z))
=
B(z,\bar z)^{-1}(z-Q(z)\bar z).
\]
Since $B(z,\bar z)$ is Hermitian,
\[
\sum_i
b^{i\bar j}(z)\partial_i\log h(z,z)
=
-\overline{(z-Q(z)\bar z)_j}.
\]
Expanding \eqref{388} and using the preceding identity gives
\eqref{41}.
\end{proof}

\begin{lemma}
Let $c$ be a minimal tripotent and put
$\ell_c(z)=\langle z,c\rangle$. For $0\le t<1$ and $\omega\in S$,
\begin{equation}\label{42}
h(\omega,tc)=1-t\ell_c(\omega),\qquad
h(tc,\omega)=1-t\overline{\ell_c(\omega)},\qquad
h(tc,tc)=1-t^2.
\end{equation}
\end{lemma}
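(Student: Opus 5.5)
The plan is to evaluate the generic norm on pairs of the form $(\omega,tc)$ with $\omega\in S$, and to do this through the defining Jordan-theoretic formula
\[
h(z,w)=\det\bigl(\text{restriction to a suitable subspace}\bigr)
\]
rather than through spectral coordinates of $\omega$. We use the characterization of $h(z,w)$ as the unique polynomial, holomorphic in $z$ and antiholomorphic in $w$, with $h(z,z)$ given by the spectral formula \eqref{9}. The third identity is immediate: $tc$ has spectral decomposition $t\cdot c$, with $c$ completed to a Jordan frame $c=e_1,e_2,\dots,e_r$ and zero coefficients on $e_2,\dots,e_r$. Formula \eqref{9} then gives $h(tc,tc)=1-t^2$.

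For the first identity, we fix $\omega$ and regard $\zeta\mapsto h(\omega,\zeta c)$ for $\zeta\in\mathbb C$ as an antiholomorphic polynomial in $\zeta$. We expand it by the Faraut--Kor\'anyi formula \eqref{13} at $u=-1$. Since $(-1)_\alpha\neq0$ only for $\alpha_1\le1$, this gives
\[
h(z,w)=\sum_{k=0}^{r}(-1)_{(1^k)}K^F_{(1^k)}(z,w),
\]
valid for $z,w\in\Omega$ and extended to all $z,w\in V$ by polynomiality. The key observation is that $K^F_\alpha(z,\zeta c)=\overline{\zeta}^{|\alpha|}K^F_\alpha(z,c)$. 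Moreover $K^F_\alpha(\cdot,c)$ vanishes unless $\alpha$ has length at most the rank of $c$. This holds because polynomials in $\mathcal P_\alpha$ with $\alpha_k>0$ vanish at elements of rank $<k$: they are built from the $k$-th principal minors, which vanish on rank $<k$ elements. Then, by conjugate symmetry $K^F_\alpha(z,w)=\overline{K^F_\alpha(w,z)}$, the function $\overline{K^F_\alpha(c,z)}$ is a polynomial in $\bar z$ of type $\alpha$. Evaluated at the rank-one point $c$ in the second slot, it is zero for length $\ge2$.

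So only $\alpha=0$ and $\alpha=(1)$ survive. The $\alpha=(1)$ term is $(-1)_{(1)}K^F_{(1)}(z,tc)=-t\langle z,c\rangle$ by Lemma~\ref{16} with $q=1$, with $\xi=tc$ of rank one and $t$ real. This yields $h(z,tc)=1-t\ell_c(z)$ for all $z\in V$, in particular for $z=\omega\in S$. The second identity then follows from $h(z,w)=\overline{h(w,z)}$.

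The main obstacle is justifying the vanishing $K^F_\alpha(\cdot,c)=0$ for $\alpha$ of length $\ge2$. I would argue this via the vanishing of $\mathcal P_\alpha$ on the set of elements of rank less than the length of $\alpha$, using $K$-invariance to reduce to the case $c=e_1$. A cleaner alternative avoids this entirely. Since $h(z,tc)$ is holomorphic in $z$, it suffices to check the identity for $z$ in $\Omega$. By \eqref{24} with a suitable $g$, or directly from $h(z,w)=\det B(z,\bar w)^{1/p_\Omega}$ with $B$ from \eqref{385}, we have $Q(tc)=0$ on $V_0\oplus V_1$ components. This gives $B(z,t\bar c)=I-tD(z,\bar c)+t^2Q(z)Q(\bar c)$, and its determinant can be computed on the Peirce decomposition of $c$, where the resulting matrix is triangular. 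If the determinant route becomes messy, I would fall back on the expansion argument above.
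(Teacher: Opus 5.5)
Your main argument is correct and has the same skeleton as the paper's proof. You expand $h(z,tc)$ by \eqref{13} at $u=-1$, discard all terms except $\alpha=0$ and $\alpha=(1)$, and identify the $(1)$-term as $-t\ell_c(z)$ by Lemma~\ref{16}.

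The difference is in the step you flag as the main obstacle, the vanishing of $K^F_\alpha(\cdot,c)$ for $\alpha=(1^k)$, $k\ge2$. You import the fact that $\mathcal P_\alpha$ vanishes on elements of rank less than the length of $\alpha$. This is true and standard. Your principal-minor sketch, however, still has to show that a minor defined from one fixed frame vanishes at every rank-one element, and this is not automatic in the abstract Jordan setting.

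The paper avoids this with tools already on the page. By \eqref{14} and \eqref{17}, $\sum_{|\alpha|=m}K^F_\alpha(z,c)=\langle z,c\rangle^m/m!=K^F_{(m)}(z,c)$. Hence $\sum_{|\alpha|=m,\,\alpha\ne(m)}K^F_\alpha(\cdot,c)=0$, and since the summands lie in distinct components of \eqref{10}, each one vanishes. Because $K^F_\alpha(\cdot,c)=\sum_i p_{\alpha,i}\overline{p_{\alpha,i}(c)}$ with linearly independent $p_{\alpha,i}$, this even proves your vanishing statement at rank-one points. Your detour through conjugate symmetry is unnecessary: all you need is $p_{\alpha,i}(c)=0$ for every $i$.

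Deriving $h(tc,tc)=1-t^2$ directly from \eqref{9} is fine. The paper instead puts $z=tc$ in the first identity and uses $\langle c,c\rangle=1$.

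Your determinant alternative also works, and it is not messy. By the Peirce rules, $D(z,\bar c)$ and $Q(z)Q(\bar c)$ never raise Peirce degree relative to $c$, and $Q(\bar c)$ kills $V_1(c)\oplus V_0(c)$. With $z_2=\ell_c(z)c$, the operator $B(z,t\bar c)$ from \eqref{385} is therefore block lower-triangular on $V_2(c)\oplus V_1(c)\oplus V_0(c)$. Its diagonal blocks are $(1-t\ell_c(z))^2$, $(1-t\ell_c(z))I$ and $I$. Since $\dim V_1(c)=(r-1)a+b$, you get $\det B(z,t\bar c)=(1-t\ell_c(z))^{p_\Omega}$. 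The $p_\Omega$-th root of unity is then fixed by the value $1$ at $z=0$. This route needs $\det B(z,\bar w)=h(z,w)^{p_\Omega}$, which the paper does not state. The paper's route uses only the Fischer-kernel identities it has already set up.
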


\begin{proof}
For each $m\ge0$, \eqref{14} and \eqref{17}, together with
the direct-sum decomposition \eqref{10}, give
$K_\alpha^F(z,c)=0$ when $|\alpha|=m$ and $\alpha\ne(m)$.
Thus \eqref{13} with $u=-1$ gives, for $z\in\Omega$,
\[
h(z,tc)
=\sum_{m\ge0}(-1)_m t^mK_{(m)}^F(z,c)
=1-t\ell_c(z).
\]
Both sides are polynomials in $z$, so the identity holds
on $V$. Taking $z=\omega$ gives the first identity in
\eqref{42}. The second follows from
$h(tc,\omega)=\overline{h(\omega,tc)}$.
Taking $z=tc$ and using $\langle c,c\rangle=1$
gives the third.
\end{proof}

Since $\ell_c^m\in\mathcal P_{(m)}$ and
$\|\ell_c^m\|_F^2=m!$, the orthogonality of distinct
$\mathcal P_\alpha|_S$ and \eqref{15} give, for
$m,m'\in\mathbb N_0$,
\begin{equation}\label{389}
\int_S \ell_c(\omega)^m
\overline{\ell_c(\omega)}^{\,m'}\,du(\omega)
=
\delta_{m,m'}\frac{m!}{(\frac nr)_m}.
\end{equation}

These identities will be used in the following rank-one calculation.

\begin{lemma}\label{43}
Let $c$ be a minimal tripotent and let $0\le t<1$.
For $\sigma,\theta\in\mathbb C$ and $q\in\mathbb N_0$,
\begin{equation}\label{44}
P_{\sigma,\theta}(\ell_c^q)(tc)
=
\frac{(\theta)_q}{(\frac nr)_q}
t^q(1-t^2)^\sigma
{}_2F_1\left(
\sigma,\theta+q;\frac nr+q;t^2
\right).
\end{equation}
\end{lemma}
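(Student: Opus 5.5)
Substitute $z=tc$ into \eqref{8} and use \eqref{42}: the prefactor becomes $h(tc,tc)^\sigma=(1-t^2)^\sigma$, and the transform of $\ell_c^q$ becomes
\[
P_{\sigma,\theta}(\ell_c^q)(tc)=(1-t^2)^\sigma\int_S\bigl(1-t\overline{\ell_c(\omega)}\bigr)^{-\theta}\bigl(1-t\ell_c(\omega)\bigr)^{-\sigma}\ell_c(\omega)^q\,du(\omega).
\]
I must be careful about which factor carries which exponent. In \eqref{8} the factor $h(z,\omega)^{-\theta}$ is antiholomorphic in $\omega$; with $z=tc$ it equals $(1-t\overline{\ell_c(\omega)})^{-\theta}$. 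The factor $h(\omega,z)^{-\sigma}$ equals $(1-t\ell_c(\omega))^{-\sigma}$.

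For $0\le t<1$ we have $|\ell_c(\omega)|\le\|\omega\|\|c\|$. Since $\omega\in S$ has all spectral values of modulus one, $|\ell_c(\omega)|\le1$; more directly, $1-t\ell_c(\omega)=h(\omega,tc)\ne0$ because $tc\in\Omega$. In either case both factors expand in binomial series that converge uniformly on $S$:
\[
(1-t\bar\ell)^{-\theta}=\sum_{m'}\frac{(\theta)_{m'}}{m'!}t^{m'}\bar\ell^{\,m'},\qquad(1-t\ell)^{-\sigma}=\sum_{m}\frac{(\sigma)_m}{m!}t^m\ell^m.
\]
These are exactly the rank-one specializations of \eqref{13}, so the branches agree with the ones used in \eqref{8}. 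Multiply the series and integrate term by term. The orthogonality relation \eqref{389} keeps only the terms with $m+q=m'$, each contributing $\frac{(m+q)!}{(n/r)_{m+q}}$. The integral therefore equals
\[
\sum_{m\ge0}\frac{(\sigma)_m(\theta)_{m+q}}{m!\,(m+q)!}\,t^{2m+q}\,\frac{(m+q)!}{(n/r)_{m+q}}
=t^q\sum_{m}\frac{(\sigma)_m(\theta)_{m+q}}{m!\,(n/r)_{m+q}}t^{2m}.
\]

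Next apply the Pochhammer identities $(\theta)_{m+q}=(\theta)_q(\theta+q)_m$ and $(n/r)_{m+q}=(n/r)_q(n/r+q)_m$. These turn the sum into
\[
\frac{(\theta)_q}{(n/r)_q}\,{}_2F_1\!\left(\sigma,\theta+q;\tfrac nr+q;t^2\right),
\]
which gives \eqref{44}.

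The main point needing care is justifying the term-by-term integration. At $t<1$ the double series converges absolutely and uniformly on $S$, since $|t\ell_c|\le t<1$, so this step is routine. Once \eqref{389} is accepted, everything else is bookkeeping of exponents and Pochhammer shifts.
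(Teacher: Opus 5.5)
Your proposal is the paper's argument: substitute $z=tc$ via \eqref{42}, expand both binomial factors, keep only the diagonal terms using \eqref{389}, and factor the Pochhammer symbols into ${}_2F_1(\sigma,\theta+q;\frac nr+q;t^2)$. One justification needs fixing, namely the bound $|\ell_c(\omega)|\le 1$ on which your uniform convergence rests. In the paper's normalization $\|\omega\|=\sqrt r$ for $\omega\in S$, so Cauchy--Schwarz only gives $|\ell_c(\omega)|\le\sqrt r$. Also, $h(\omega,tc)\ne0$ for real $t\in[0,1)$ only excludes $\ell_c(\omega)\in[1,\infty)$. Use instead $h(\omega,\zeta c)=1-\bar\zeta\,\ell_c(\omega)\ne0$ for all $|\zeta|<1$, by circularity, which yields $|\ell_c(\omega)|\le1$ as needed.
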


\begin{proof}
By \eqref{42},
\[
P_{\sigma,\theta}(\ell_c^q)(tc)
=
(1-t^2)^\sigma
\int_S
(1-t\overline{\ell_c})^{-\theta}
(1-t\ell_c)^{-\sigma}\ell_c^q\,du.
\]
Expanding the two factors and using \eqref{389} gives
\[
P_{\sigma,\theta}(\ell_c^q)(tc)
=
(1-t^2)^\sigma t^q
\sum_{m\ge0}
\frac{(\theta)_{m+q}(\sigma)_m}
     {(\frac nr)_{m+q}m!}\,t^{2m}.
\]
Factoring out $(\theta)_q/(\frac nr)_q$ and using the
definition of ${}_2F_1$ proves \eqref{44}.
\end{proof}

\begin{remark}
Setting $q=0$ in \eqref{44} gives
\begin{equation}\label{45}
P_{\sigma,\theta}1(tc)
=
(1-t^2)^\sigma
{}_2F_1\left(
\sigma,\theta;\frac nr;t^2
\right).
\end{equation}

The kernel in \eqref{8} also gives
\[
P_{\sigma,\theta}\overline f
=
h(z,z)^{\sigma-\theta}
\overline{P_{\bar\theta,\bar\sigma}f}.
\]
Applying \eqref{44} with $q=1$ and parameters
$(\bar\theta,\bar\sigma)$ therefore yields
\begin{equation}\label{46}
P_{\sigma,\theta}(\overline{\ell_c})(tc)
=
\frac{r\sigma}{n}\,
t(1-t^2)^\sigma
{}_2F_1\left(
\theta,\sigma+1;\frac nr+1;t^2
\right).
\end{equation}

Finally, Euler's transformation rewrites \eqref{44} as
\begin{equation}\label{47}
P_{\sigma,\theta}(\ell_c^q)(tc)
=
\frac{(\theta)_q}{(\frac nr)_q}
t^q(1-t^2)^{\frac nr-\theta}
{}_2F_1\left(
\frac nr+q-\sigma,\frac nr-\theta;
\frac nr+q;t^2
\right).
\end{equation}
\end{remark}

\subsection{Boundary regularity and parameter reduction}

\begin{lemma}\label{48}
Let $C>0$ and $A,B\in\mathbb C$.  The function \[\psi(\rho) = \rho^A{}_2F_1(A,B;C;1-\rho), \quad 0<\rho\le1,\]
extends smoothly across $\rho=0$ if and only if at least one of
\begin{equation}\label{49}
A=0;\qquad
B=C;\qquad
A\in\mathbb N_0,\ B\in-\mathbb N_0;\qquad
A-C\in\mathbb N_0,\ C-B\in\mathbb N_0
\end{equation}
holds.  In each of these cases $\psi$ is a polynomial in $\rho$.  Otherwise
$\psi$ fails to have a $C^m$ extension at $0$ for some
$m\in\mathbb N_0$.
\end{lemma}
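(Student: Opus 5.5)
We will use the connection formulas of ${}_2F_1$ at $x=1$, writing $x=1-\rho$.

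\emph{Sufficiency.} In each case of \eqref{49}, $\psi$ is an explicit polynomial in $\rho$:
\begin{itemize}
\item If $A=0$, then $\psi\equiv1$.
\item If $B=C$, then ${}_2F_1(A,C;C;x)=(1-x)^{-A}=\rho^{-A}$, so $\psi\equiv1$.
\item If $A\in\mathbb N_0$ and $B\in-\mathbb N_0$, the series terminates. Thus $\psi=\rho^A\cdot(\text{polynomial in }1-\rho)$ is a polynomial.
\item If $A-C\in\mathbb N_0$ and $C-B\in\mathbb N_0$, Euler's transformation gives
\[
\psi(\rho)=\rho^{C-B}\,{}_2F_1(C-A,C-B;C;1-\rho).
\]
Here $C-A\in-\mathbb N_0$, so the series terminates and $\psi$ is again a polynomial.
\end{itemize}

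\emph{Necessity.} Assume no condition in \eqref{49} holds; we must find a finite $m$ at which $C^m$ fails. The plan is to expand ${}_2F_1(A,B;C;1-\rho)$ near $\rho=0$ using the standard connection formula (DLMF 15.8.4 and its logarithmic versions for $C-A-B\in\mathbb Z$).

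In the generic case $C-A-B\notin\mathbb Z$,
\[
{}_2F_1(A,B;C;1-\rho)=\frac{\Gamma(C)\Gamma(C-A-B)}{\Gamma(C-A)\Gamma(C-B)}\,G_1(\rho)
+\rho^{C-A-B}\,\frac{\Gamma(C)\Gamma(A+B-C)}{\Gamma(A)\Gamma(B)}\,G_2(\rho),
\]
where $G_1,G_2$ are analytic at $0$ with $G_1(0)=G_2(0)=1$. Hence
\[
\psi(\rho)=\alpha\,\rho^{A}G_1(\rho)+\beta\,\rho^{C-B}G_2(\rho),
\]
with $\alpha=\dfrac{\Gamma(C)\Gamma(C-A-B)}{\Gamma(C-A)\Gamma(C-B)}$ and $\beta=\dfrac{\Gamma(C)\Gamma(A+B-C)}{\Gamma(A)\Gamma(B)}$.

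The exponents $A$ and $C-B$ differ by a non-integer, so the two terms cannot cancel. Therefore $\psi$ extends smoothly if and only if each nonzero term is smooth. A term $\rho^{E}G(\rho)$ with $G(0)\neq0$ is smooth if and only if $E\in\mathbb N_0$; otherwise its $\lceil\operatorname{Re}E\rceil$-th derivative (or order $0$ if $\operatorname{Re}E\le 0$, $E\neq0$) blows up or oscillates. This gives an explicit $m$. It remains to check that each case where some exponent is integral and its companion coefficient vanishes is one of \eqref{49}:
\begin{itemize}
\item The term $\alpha\rho^A G_1$ is harmless iff $A\in\mathbb N_0$ or $\alpha=0$, i.e.\ $C-A\in-\mathbb N_0$ or $C-B\in-\mathbb N_0$.
\item The term $\beta\rho^{C-B}G_2$ is harmless iff $C-B\in\mathbb N_0$ or $\beta=0$, i.e.\ $A\in-\mathbb N_0$ or $B\in-\mathbb N_0$.
\end{itemize}
Combining the alternatives, noting that $C>0$ excludes some combinations and that the sum is then a single polynomial as in the sufficiency part, leaves exactly \eqref{49}.

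\emph{Main obstacle.} The degenerate case $C-A-B\in\mathbb Z$ is the hardest step, because the expansion then carries a term $\rho^{\min}\log\rho$ times an analytic series. Here we use the logarithmic connection formula. The smooth part is of the form $\rho^A\cdot(\text{analytic})$ or a finite sum. The log term has coefficient proportional to $1/(\Gamma(A)\Gamma(B))$ or $1/(\Gamma(C-A)\Gamma(C-B))$, depending on the sign of $C-A-B$. We must show:
\begin{itemize}
\item any surviving $\rho^{k}\log\rho$ with $k\in\mathbb N_0$ destroys $C^{k+1}$;
\item if the log term vanishes, the remaining finite polar or power terms force one of \eqref{49}.
\end{itemize}
This is a case check on which Gamma factors have poles. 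We will also use the reflection ${}_2F_1(A,B;C;x)={}_2F_1(B,A;C;x)$ to reduce the number of cases.
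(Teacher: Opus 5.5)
Your sufficiency argument and your generic case $C-A-B\notin\mathbb Z$ are correct. With your $\alpha,\beta$, smoothness is equivalent to both terms being harmless, and your enumeration does reduce to \eqref{49}: two integral exponents contradict genericity, and $\alpha=\beta=0$ forces $C\in-\mathbb N_0$ or $C-A-B\in\mathbb Z$.

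The gap is the degenerate case $s:=C-A-B\in\mathbb Z$, which you only announce. Your sentence ``if the log term vanishes, the remaining finite polar or power terms force one of \eqref{49}'' is exactly the claim that must be proved there, and it is not a mere inspection. When $1/(\Gamma(A)\Gamma(B))=0$, the logarithmic connection formula (DLMF 15.8.10) has to be evaluated as a limit against poles of the digamma terms. This case is not marginal; it contains, e.g., all parameters with $C\in\mathbb Z$ and $A+B\in\mathbb Z$. Two smaller points also need fixing. The reflection $A\leftrightarrow B$ cannot reduce cases, because $\psi$ carries the factor $\rho^A$. The useful symmetry is Euler's $(A,B)\mapsto(C-B,C-A)$, which fixes $\psi$, sends $s\mapsto -s$, and permutes the alternatives in \eqref{49} (first with second, third with fourth). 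And the logarithmic term in $\psi$ is $\rho^{A+s}\log\rho$ with $A+s$ complex in general, not $\rho^k\log\rho$ with $k\in\mathbb N_0$.

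With Euler's symmetry the case closes quickly; take $s\in\mathbb N_0$.
\begin{itemize}
\item If $A,B\notin-\mathbb N_0$, DLMF 15.8.10 writes $\psi$ as $\rho^A$ times a function analytic at $0$, plus a nonzero multiple of $\rho^{A+s}\log\rho\,(1+O(\rho))$. A logarithmic term cannot be cancelled by log-free terms and is never $C^\infty$, so $\psi$ is not smooth.
\item If $A$ or $B$ lies in $-\mathbb N_0$, skip the log formula. The series terminates, so $\psi=\rho^AQ(\rho)$ with $Q$ a polynomial and $Q(1)=1$.
\item For $A=-N\in-\mathbb N_0$ one has $\deg Q\le N$, so smoothness forces $Q=\rho^N$, i.e.\ ${}_2F_1(-N,B;C;x)=(1-x)^N$. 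Comparing coefficients of $x$ gives $N=0$ or $B=C$.
\item For $B\in-\mathbb N_0$ and $A\notin-\mathbb N_0$, smoothness forces $A\in\mathbb Z$, hence $A\in\mathbb N$, which is the third alternative.
\end{itemize}

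For comparison, the paper avoids connection formulas altogether. From the second-order equation satisfied by $\psi$, it shows that a smooth $\psi$ is analytic at $0$, hence entire, hence (regular singularity at $\infty$) a polynomial. It then reads off \eqref{49} from the degree $N=A-B$ and the vanishing order $m$ at $0$, which satisfies $(m-A)(m-C+B)=0$. Your local route, once completed, also yields the explicit singular term, but the paper's global argument needs no case analysis.
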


\begin{proof}
The four alternatives in \eqref{49} are sufficient. If $A=0$ or
$B=C$, then $\psi=1$. In the third case the Gauss series
terminates. In the fourth case Euler's transformation gives
\[
\psi(\rho)=
\rho^{C-B}{}_2F_1(C-A,C-B;C;1-\rho),
\]
which is again a polynomial.

Conversely, suppose that $\psi$ is smooth at $\rho=0$.
The Gauss hypergeometric equation gives
\[
\rho(1-\rho)\psi''
+\bigl(B+1-A-C+(A-B-1)\rho\bigr)\psi'
+\frac{A(C-B)}{\rho}\psi=0.
\]
Its only possible singularities are $0$, $1$, and $\infty$,
all regular. The Frobenius expansions show that smoothness
at $0$ forces $\psi$ to be analytic there; see
\cite[Section~2.7(i)]{DLMF}.
The defining series shows that $\psi$ is also analytic at $1$.
Continuation around both finite singularities is therefore
trivial, so $\psi$ extends to an entire function.

The point at infinity is a regular singularity. Its Frobenius
expansions therefore give constants $C_0,M>0$ such that
$|\psi(\rho)|\leq C_0(1+|\rho|)^M$ for large $|\rho|$.
For any integer $n>M$, Cauchy's estimate on $|\rho|=R$ gives
\[
|\psi^{(n)}(0)|
\leq \frac{n!C_0(1+R)^M}{R^n}.
\]
Letting $R\to\infty$ shows that $\psi^{(n)}(0)=0$.
Hence $\psi$ is a polynomial.

If $\psi$ is constant, then $\psi(1)=1$ and the equation gives
$A=0$ or $B=C$. Otherwise, let $N\geq1$ be its degree and
$m\geq0$ its order of vanishing at $0$. Comparing the
highest and lowest powers of $\rho$ gives
\[
N=A-B,\qquad (m-A)(m-C+B)=0.
\]
Since $0\leq m\leq N$, either
$A=m\in\mathbb N_0$ and $B=m-N\in-\mathbb N_0$, or
$C-B=m\in\mathbb N_0$ and $A-C=N-m\in\mathbb N_0$.
These are the remaining two conditions in \eqref{49}.
\end{proof}

\begin{lemma}\label{50}
Let $\sigma,\theta\in\mathbb C$, let
$m\in\mathbb N_0\cup\{\infty\}$, and let $f\in L^2(S)$.  If \[0\ne P_{\sigma,\theta}f\in C^m(\overline\Omega),\]
then $P_{\sigma,\theta}1\in C^m(\overline\Omega)$.  If
$m\in\mathbb N_0$ and $0<\varepsilon\le1$, the same statement holds with
$C^m$ replaced by $C^{m,\varepsilon}$.  For every $s>0$, it also holds with
$C^m$ replaced by $\mathcal R^s$.
\end{lemma}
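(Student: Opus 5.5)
The plan is to pass from a nonzero $F=P_{\sigma,\theta}f$ to the transform of the constant function $1$, using the two operations that Lemma~\ref{29} shows preserve regularity: moving the data by $\pi_{\sigma,\theta}(\widetilde g)$, and projecting onto the trivial $K$-type. By \eqref{11}, projecting onto the trivial $K$-type $\tau_0$ gives
\[
P_{\sigma,\theta}\Pi_{\tau_0}^S f=\Pi_{\tau_0}^\Omega F .
\]
Here $\Pi_{\tau_0}^S f$ is the constant $\int_S f\,du$, so the right-hand side equals $\bigl(\int_S f\,du\bigr)P_{\sigma,\theta}1$. If this constant is nonzero, Lemma~\ref{29} gives $P_{\sigma,\theta}1\in C^m(\overline\Omega)$ (respectively $C^{m,\varepsilon}$ or $\mathcal R^s$), and we are done. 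For $m=\infty$ we apply the finite case for every $m$.

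It therefore remains to find $\widetilde g\in\widetilde G$ with $\int_S\pi_{\sigma,\theta}(\widetilde g)f\,du\neq0$, after which we replace $f$ by $\pi_{\sigma,\theta}(\widetilde g)f$. That transform is still regular by Lemma~\ref{29}. Put $F_{\widetilde g}=P_{\sigma,\theta}\pi_{\sigma,\theta}(\widetilde g)f$. By \eqref{27},
\[
F_{\widetilde g}(0)=J_{\widetilde g^{-1}}(0)^{(\theta-\sigma)/p_\Omega}F(g^{-1}0).
\]
Setting $z=0$ in \eqref{8}, and using $h(0,\cdot)=h(\cdot,0)=1$, gives $P_{\sigma,\theta}\varphi(0)=\int_S\varphi\,du$ for every $\varphi\in L^2(S)$. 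Hence
\[
\int_S\pi_{\sigma,\theta}(\widetilde g)f\,du=F_{\widetilde g}(0)=J_{\widetilde g^{-1}}(0)^{(\theta-\sigma)/p_\Omega}F(g^{-1}0).
\]
Since $F\ne0$ on $\Omega$, there is a point $z_0\in\Omega$ with $F(z_0)\neq0$. Because $G$ acts transitively, we can choose $\widetilde g$ with $g^{-1}0=z_0$. The Jacobian factor is nonzero, so the integral is nonzero.

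The only point needing care is that $F\neq0$ as a function on $\Omega$. This is exactly the hypothesis, read in the scalar realization, so no injectivity of $P_{\sigma,\theta}$ is needed. I expect no real obstacle. The only subtle step is checking that the regularity classes, including the union defining $\mathcal R^s$, survive both operations, and Lemma~\ref{29} supplies this verbatim.
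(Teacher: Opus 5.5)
Your proposal is correct and follows the paper's proof essentially step for step. You translate by $\pi_{\sigma,\theta}(\widetilde g)$ with $g^{-1}0=z_0$, so that \eqref{27} gives $F_{\widetilde g}(0)\neq0$, and then $K$-average (your trivial-type projection $\Pi^S_{\tau_0}$ is exactly the paper's $f_0$), which expresses $F_{\widetilde g}(0)\,P_{\sigma,\theta}1$ as the $K$-average of $F_{\widetilde g}$, with Lemma~\ref{29} preserving the regularity at both steps.
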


\begin{proof}
Let $F=P_{\sigma,\theta}f$ and choose $z_0\in\Omega$ with $F(z_0)\ne0$.
Choose $g\in G$ with $g^{-1}0=z_0$ and choose a lift
$\widetilde g\in\widetilde G$.  Put
\[
 f_{\widetilde g}=\pi_{\sigma,\theta}(\widetilde g)f,
 \qquad
 F_{\widetilde g}=P_{\sigma,\theta}f_{\widetilde g}.
\]
By \eqref{27}, \(F_{\widetilde g}(0) =J_{\widetilde g^{-1}}(0)^{(\theta-\sigma)/p_\Omega}F(z_0)\),
so $F_{\widetilde g}(0)\ne0$.  Lemma~\ref{29} gives the same boundary
regularity for $F_{\widetilde g}$ as for $F$. When $m=\infty$, both
uses of Lemma~\ref{29} in this proof are made at every finite order.

Let \(f_0=\int_K f_{\widetilde g}\circ k^{-1}\,dk\).
Since $K$ acts transitively on $S$, and $h(0,\omega)=h(\omega,0)=1$,
\[
f_0(w)=\int_S f_{\widetilde g}(\omega)\,du(\omega)
=F_{\widetilde g}(0),\qquad w\in S.
\]

Since $P_{\sigma,\theta}$ intertwines the $K$-actions,
\[
P_{\sigma,\theta}f_0
=\int_K F_{\widetilde g}(k^{-1}z)\,dk.
\]
As $f_0\equiv F_{\widetilde g}(0)$, we obtain
\[
F_{\widetilde g}(0)P_{\sigma,\theta}1
=\int_K F_{\widetilde g}(k^{-1}z)\,dk.
\]
The right-hand side is the $K$-average of $F_{\widetilde g}$ and therefore
has the same $C^m$, $C^{m,\varepsilon}$, or $\mathcal R^s$ boundary
regularity; see Lemma~\ref{29}. Since $F_{\widetilde g}(0)\ne0$,
$P_{\sigma,\theta}1$ has the same regularity.
\end{proof}

\begin{proposition}\label{51}
Let $\sigma,\theta\in\mathbb C$.  If \(P_{\sigma,\theta}(L^2(S)) \cap C^\infty(\overline\Omega)\ne\{0\}\),
then at least one of the following holds:
\begin{equation}\label{52}
\sigma=0;\qquad
\theta=\frac nr;\qquad
\sigma\in\mathbb N_0,\ \theta\in-\mathbb N_0;\qquad
\sigma-\frac nr\in\mathbb N_0,\ 
\frac nr-\theta\in\mathbb N_0.
\end{equation}

At every parameter outside \eqref{52}, there exists
$m\in\mathbb N_0$ such that, for every $f\in L^2(S)$,
\begin{equation}\label{53}
P_{\sigma,\theta}f\in C^m(\overline\Omega)
\quad\Longrightarrow\quad
P_{\sigma,\theta}f=0.
\end{equation}
\end{proposition}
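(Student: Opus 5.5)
The plan is to reduce everything to the radial function $P_{\sigma,\theta}1$ on a minimal disk and then apply Lemma~\ref{48}. Suppose first that $0\ne F=P_{\sigma,\theta}f\in C^\infty(\overline\Omega)$. By Lemma~\ref{50} with $m=\infty$, the function $P_{\sigma,\theta}1$ also belongs to $C^\infty(\overline\Omega)$. Fix a minimal tripotent $c$. Then $tc\in\overline\Omega$ for $|t|\le1$, and the segment $t\mapsto tc$ approaches the boundary point $c$. So $t\mapsto P_{\sigma,\theta}1(tc)$ is smooth on $[0,1]$, and indeed on a neighborhood of $t=1$, since the extension is defined on a Euclidean neighborhood of $\overline\Omega$. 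By \eqref{45},
\[
P_{\sigma,\theta}1(tc)=(1-t^2)^\sigma\,{}_2F_1\!\left(\sigma,\theta;\tfrac nr;t^2\right).
\]
Put $\rho=1-t^2$. The map $t\mapsto\rho$ is a diffeomorphism near $t=1$, so the function $\psi(\rho)=\rho^{\sigma}{}_2F_1(\sigma,\theta;\tfrac nr;1-\rho)$ extends smoothly across $\rho=0$. Lemma~\ref{48} applies with $A=\sigma$, $B=\theta$, $C=n/r>0$, and its four alternatives \eqref{49} are exactly those of \eqref{52}. This proves the first assertion.

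For the second assertion, fix $(\sigma,\theta)$ outside \eqref{52}. Lemma~\ref{48} then supplies $m_0\in\mathbb N_0$ such that $\psi$ has no $C^{m_0}$ extension at $\rho=0$. Take $m=m_0$. Suppose $P_{\sigma,\theta}f\in C^m(\overline\Omega)$ but $P_{\sigma,\theta}f\ne0$. Lemma~\ref{50}, now with finite $m$, gives $P_{\sigma,\theta}1\in C^m(\overline\Omega)$. Restricting to the segment through $c$ and changing variables to $\rho$ gives a $C^m$ extension of $\psi$ at $0$. This contradicts the choice of $m_0$, so $P_{\sigma,\theta}f=0$. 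The integer $m$ depends only on the parameters, not on $f$, as required.

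The main point to check is the passage from Euclidean regularity on $\overline\Omega$ to one-variable regularity in $\rho$ at $\rho=0$. Since the extension lives on a full neighborhood of $\overline\Omega$, its restriction to the complex line $\mathbb Cc$ is $C^m$ near $t=1$. Composing with the inverse of $t=\sqrt{1-\rho}$, which is smooth near $\rho=0$, gives a $C^m$ function of $\rho$ on a two-sided neighborhood of $0$. On $(0,1]$ this function agrees with $\psi$, because the extension agrees with $P_{\sigma,\theta}1$ on $\Omega$. This is exactly the $C^m$ extension that Lemma~\ref{48} excludes. The remaining work, matching \eqref{49} with \eqref{52}, is a direct substitution.
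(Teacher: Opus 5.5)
Your proposal is correct and follows the paper's proof exactly: you use Lemma~\ref{50} to pass to $P_{\sigma,\theta}1$, restrict to the minimal disk via \eqref{45}, substitute $\rho=1-t^2$, and apply Lemma~\ref{48} with $A=\sigma$, $B=\theta$, $C=\frac nr$. Your explicit check that the Euclidean $C^m$ extension restricts to a two-sided $C^m$ extension of $\psi$ at $\rho=0$ is a detail the paper leaves implicit.
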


\begin{proof}
Suppose first that $P_{\sigma,\theta}f$ is nonzero and smooth on
$\overline\Omega$.  By Lemma~\ref{50},
$P_{\sigma,\theta}1$ has the same regularity.  For a minimal tripotent $c$, \eqref{45} gives
\[
P_{\sigma,\theta}1(tc)
=
(1-t^2)^\sigma
{}_2F_1\left(
\sigma,\theta;\frac nr;t^2
\right).
\]
Putting $\rho=1-t^2$ and applying Lemma~\ref{48} with \(A=\sigma,\quad B=\theta,\quad C=\frac nr\)
gives exactly the four alternatives in \eqref{52}.

Now suppose that $(\sigma,\theta)$ lies outside
\eqref{52}.  Lemma~\ref{48} shows that the
preceding radial function fails to have a $C^m$ extension at $t=1$ for
some finite $m\in\mathbb N_0$.  If a nonzero
$P_{\sigma,\theta}f$ belonged to $C^m(\overline\Omega)$, then
Lemma~\ref{50} would give the same regularity for
$P_{\sigma,\theta}1$, a contradiction.  Hence
\eqref{53} holds.
\end{proof}

\begin{proposition}\label{54}
Let $f\in L^2(S)$. Then $P_{0,\theta}f$ is holomorphic
for every $\theta\in\mathbb C$.
For $m\in\mathbb N$ and $q\in\mathbb N_0$,
\begin{equation}\label{55}
\mathcal D_{q+1}\bigl(h(z,z)^{-m}P_{m,-q}f(z)\bigr)=0.
\end{equation}
\end{proposition}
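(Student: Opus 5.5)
The plan is to read both statements directly off the scalar kernel \eqref{8}; no differential equation is needed.

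For the first claim, put $\sigma=0$ in \eqref{8}. This gives
\[
P_{0,\theta}f(z)=\int_S h(z,\omega)^{-\theta}f(\omega)\,du(\omega).
\]
The map $z\mapsto h(z,\omega)$ is holomorphic and nonvanishing on $\Omega$ for $\omega\in S$. Lemma~\ref{28}, or the local expansion preceding \eqref{20} moved around by \eqref{27}, supplies the holomorphic branch of the power. Differentiating under the integral sign, with $\bar\partial_j$ killing the integrand, gives holomorphy. An alternative is to say that the expansion \eqref{20} with $(\sigma)_\beta=\delta_{\beta,0}$ shows holomorphy near $0$. One then transports this to every point by \eqref{27}, using that the Jacobian factor is holomorphic and that $\pi_{0,\theta}(\widetilde g)f$ is again $L^2$. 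The direct argument is simpler, and I would use it.

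For \eqref{55}, put $\sigma=m$ and $\theta=-q$. Then
\[
h(z,z)^{-m}P_{m,-q}f(z)=\int_S h(z,\omega)^{q}\,h(\omega,z)^{-m}f(\omega)\,du(\omega).
\]
The operator $\mathcal D_{q+1}$ has constant holomorphic coefficients, so it passes under the integral. The factor $h(\omega,z)^{-m}$ is antiholomorphic in $z$ and is unaffected by holomorphic derivatives $p^*(\partial)$. It therefore remains to see that $p^*(\partial)\,h(\cdot,\omega)^q=0$ for $p\in\mathcal P_{(q+1)}$ and $\omega\in S$. By \eqref{13} with $u=-q$, $h(z,\omega)^q=\sum_{\alpha_1\le q}(-q)_\alpha K_\alpha^F(z,\omega)$ is a polynomial in $z$ lying in $\mathcal F_q$. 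Lemma~\ref{32} with $L=q$ then gives $\mathcal D_{q+1}h(\cdot,\omega)^q\equiv0$. Formula \eqref{13} is stated for $z,w\in\Omega$. Both sides are polynomial in $(z,\bar w)$, so the identity extends to $\omega\in S$, or one may simply use \eqref{31} together with continuity of polynomials.

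The main (mild) obstacle is justifying differentiation under the integral with $f$ only in $L^2(S)$, together with the choice of branches of $h(z,\omega)^{-\theta}$ and $h(\omega,z)^{-m}$ for $z$ in all of $\Omega$. Here $m$ is an integer, so $h(\omega,z)^{-m}$ needs no branch. For fixed $z$ in a compact subset of $\Omega$, $h(z,\omega)$ stays bounded away from $0$ uniformly in $\omega\in S$; this follows by $G$-transport via \eqref{24} from the nonvanishing at $z=0$. The integrands and all their $z$-derivatives are therefore uniformly bounded, and dominated convergence with $f\in L^1(S)$ applies. For general complex $\theta$, I would define the branch as in \eqref{8}: continue from $h(0,\omega)=1$ along the segment $tz$, which stays inside the convex set $\Omega$ where $h$ does not vanish. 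This is the same convention under which \eqref{8} defines the transform.
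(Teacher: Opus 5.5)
Your proposal is correct and follows the paper's proof essentially verbatim. You set $\sigma=0$ in \eqref{8} and differentiate under the integral for holomorphy; for \eqref{55} you use that $h(\cdot,\omega)^q\in\mathcal F_q$ (extended from $w\in\Omega$ to $\omega\in S$ by polynomiality or finite-dimensionality), Lemma~\ref{32}, antiholomorphy of $h(\omega,z)^{-m}$ in $z$, and differentiation under the integral with $f\in L^1(S)$. Your remarks on the branch of $h(z,\omega)^{-\theta}$ and the uniform lower bound for $|h(z,\omega)|$ on compacta spell out what the paper compresses into one line.
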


\begin{proof}
If $\sigma=0$, then \(P_{0,\theta}f(z) = \int_S h(z,\omega)^{-\theta}f(\omega)\,du(\omega)\),
which is holomorphic in $z$.

For the second assertion, we recall that
\[
h(z,z)^{-m}P_{m,-q}f(z)
=
\int_S h(z,\omega)^q h(\omega,z)^{-m}f(\omega)\,du(\omega).
\]
By \eqref{31}, $h(\,\cdot\,,w)^q\in\mathcal F_q$
for $w\in\Omega$. Since $\mathcal F_q$ is finite-dimensional,
letting $w\to\omega\in S$ gives
$h(\,\cdot\,,\omega)^q\in\mathcal F_q$.
Hence Lemma~\ref{32} gives \(\mathcal D_{q+1,z}h(z,\omega)^q=0\).
The operator $\mathcal D_{q+1}$ contains only holomorphic derivatives, while
$h(\omega,z)^{-m}$ is antiholomorphic in $z$. On every compact subset of
$\Omega$, the kernel and all derivatives occurring here are bounded
uniformly in $\omega\in S$. Since $f\in L^2(S)\subset L^1(S)$, we may
differentiate under the integral, obtaining \eqref{55}.
\end{proof}

\subsection{Radial tests for boundary regularity}

Proposition~\ref{51} reduces the possible parameters, and
Proposition~\ref{54} settles the cases $\sigma=0$ and
$\sigma\in\mathbb N$, $\theta\in-\mathbb N_0$.
We now prove two further cases of Theorem~\ref{3}:
$\theta=\frac nr$ with $\sigma\ne0$ and
$\sigma-\frac nr\notin\mathbb N_0$; and
$\sigma-\frac nr\in\mathbb N_0$ with
$\frac nr-\theta\in\mathbb N$ and $\theta\notin-\mathbb N_0$.
Both proofs use $K$-type projections and the radial
formulas of Lemma~\ref{43}. The case $\theta=\frac nr$ with
$\sigma-\frac nr\in\mathbb N_0$ is treated separately in the
tube-type and non-tube-type sections.

\begin{lemma}\label{551}
Let $\lambda>0$ and $\mu\in\mathbb C\setminus\{0\}$, with
$\lambda-\mu\notin-\mathbb N_0$. For $0<\rho<1$ near $0$,
\[
{}_2F_1(\lambda-\mu,1;\lambda+1;1-\rho)
=H(\rho)+
\begin{cases}
\displaystyle
\frac{\Gamma(\lambda+1)\Gamma(-\mu)}
     {\Gamma(\lambda-\mu)}
\rho^\mu(1-\rho)^{-\lambda},
&\mu\notin\mathbb N_0,\\[3mm]
\displaystyle
\frac{(-1)^{\mu+1}\Gamma(\lambda+1)}
     {\mu!\,\Gamma(\lambda-\mu)}
\rho^\mu\log\rho\,(1-\rho)^{-\lambda},
&\mu\in\mathbb N,
\end{cases}
\]
where $H$ is analytic near $\rho=0$. In both cases the
displayed coefficient is nonzero.
\end{lemma}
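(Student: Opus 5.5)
The plan is to apply the standard connection formulas for ${}_2F_1$ at $z=1$ to the function ${}_2F_1(a,b;c;z)$ with $a=\lambda-\mu$, $b=1$, $c=\lambda+1$, $z=1-\rho$. Here $c-a-b=\mu$. First we use Euler's transformation to write
\[
{}_2F_1(\lambda-\mu,1;\lambda+1;1-\rho)=\rho^{\mu}\,{}_2F_1(1+\mu,\lambda;\lambda+1;1-\rho),
\]
which explains the factor $\rho^\mu$. It is cleaner, however, to invoke directly the connection formula (DLMF 15.8.4/15.10.21).

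\textbf{Case $\mu\notin\mathbb Z$.} Since $c-a-b=\mu$ is not an integer,
\[
{}_2F_1(a,b;c;1-\rho)=\frac{\Gamma(c)\Gamma(\mu)}{\Gamma(c-a)\Gamma(c-b)}{}_2F_1(a,b;1-\mu;\rho)
+\frac{\Gamma(c)\Gamma(-\mu)}{\Gamma(a)\Gamma(b)}\rho^{\mu}{}_2F_1(c-a,c-b;\mu+1;\rho).
\]
The first term is analytic near $0$; call it part of $H$. In the second term, $c-a=1+\mu$ and $c-b=\lambda$, so ${}_2F_1(1+\mu,\lambda;1+\mu;\rho)=(1-\rho)^{-\lambda}$ by the binomial theorem. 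The coefficient is $\Gamma(\lambda+1)\Gamma(-\mu)/\Gamma(\lambda-\mu)$, which matches the statement. If $\lambda-\mu\in-\mathbb N_0$ the first coefficient could be problematic, but that case is excluded by hypothesis; $1/\Gamma(\lambda-\mu)\neq0$ holds for the same reason.

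\textbf{Case $\mu$ a negative integer.} Here $\rho^\mu(1-\rho)^{-\lambda}$ is meromorphic, so we need a separate argument. The cleanest route is to treat the function of $\mu$ analytically and take limits, or to use the integer case of the connection formula with roles swapped (DLMF 15.8.10). In that formula, the logarithmic part is multiplied by $\rho^{0}$ times a series that starts at $\rho^{-\mu}$, so the singular part is the finite Laurent piece $\rho^\mu(\ldots)$. Since the statement lists only $\mu\notin\mathbb N_0$ as one case, negative integers must also be covered. The most robust plan is a continuity argument: both sides are analytic in $\mu$ on a punctured neighborhood. We note that
\[
\rho^\mu(1-\rho)^{-\lambda}-\text{(finite Laurent polynomial)}
\]
is analytic, and that the finite principal part extracted from the first term cancels, leaving exactly the stated formula.

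\textbf{Case $\mu\in\mathbb N$.} We use the logarithmic connection formula for $c-a-b=m\in\mathbb N$ (DLMF 15.8.10 type). The result is ${}_2F_1(a,b;c;1-\rho)$ equal to a finite sum $\sum_{k<m}$ (a polynomial, hence analytic), plus
\[
\frac{(-1)^{m+1}\Gamma(c)}{m!\,\Gamma(a)\Gamma(b)}\,\rho^{m}\sum_k \frac{(a+m)_k(b+m)_k}{k!(k+m)!}\,m!\,\rho^k\bigl(\log\rho-\psi\text{-terms}\bigr).
\]
The $\psi$-terms are analytic and go into $H$. The log coefficient series is again ${}_2F_1(c-a,c-b;m+1;\rho)=(1-\rho)^{-\lambda}$ because $c-a=m+1$. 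With $\Gamma(b)=1$, this gives the coefficient $(-1)^{\mu+1}\Gamma(\lambda+1)/(\mu!\,\Gamma(\lambda-\mu))$.

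Nonvanishing of the coefficients is immediate. $\Gamma(\lambda+1)$ is finite and nonzero since $\lambda>0$. $\Gamma(-\mu)$ is finite and nonzero since $\mu\notin\mathbb N_0$. $1/\Gamma(\lambda-\mu)\ne0$ since $\lambda-\mu\notin-\mathbb N_0$.

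The main obstacle is bookkeeping the connection formulas precisely, especially verifying that in the logarithmic case the coefficient series collapses to $(1-\rho)^{-\lambda}$, and handling negative integer $\mu$ if needed. I would cite DLMF 15.8.4 and 15.8.10 directly rather than rederiving them.
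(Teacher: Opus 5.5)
Your route is the paper's: DLMF 15.8.4 for $\mu\notin\mathbb Z$, where ${}_2F_1(1+\mu,\lambda;1+\mu;\rho)=(1-\rho)^{-\lambda}$ produces the singular term; a limit in the parameter for $\mu\in-\mathbb N$; and DLMF 15.8.10 for $\mu\in\mathbb N$. Your non-integer computation and your logarithmic computation are correct. The latter includes the collapse of the log-coefficient series to $(1-\rho)^{-\lambda}$ and the constant $(-1)^{\mu+1}\Gamma(\lambda+1)/(\mu!\,\Gamma(\lambda-\mu))$.

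The negative-integer step, however, is not justified as you wrote it. Analyticity of both sides in $\mu$ on a \emph{punctured} neighborhood of $\mu=-k$ tells you nothing at $\mu=-k$. What you need is that every term of the identity has a finite limit there. Also, there is no principal part of the first term to cancel.

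The missing observation is the following.
\begin{itemize}
\item The first coefficient simplifies to $\Gamma(\lambda+1)\Gamma(\mu)/(\Gamma(1+\mu)\Gamma(\lambda))=\lambda/\mu$, because the poles of $\Gamma(\mu)$ and $\Gamma(1+\mu)$ cancel.
\item The function ${}_2F_1(\lambda-\mu,1;1-\mu;\rho)$ has lower parameter $1-\mu$ near $1+k>0$. It is therefore jointly analytic in $(\mu,\rho)$ near $(-k,0)$.
\item The factor $\Gamma(-\mu)/\Gamma(\lambda-\mu)$ is finite at $\mu=-k$.
\item For fixed $\rho\in(0,1)$, the left side and $\rho^\mu(1-\rho)^{-\lambda}$ are entire in $\mu$.
\end{itemize}
You must use the simplified form $(1-\rho)^{-\lambda}$ for the second hypergeometric function before taking the limit. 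In the generic connection formula its lower parameter $1+\mu$ becomes a nonpositive integer at $\mu=-k$.

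With these facts the identity passes to $\mu=-k$ with $H(\rho)=-\frac{\lambda}{k}\,{}_2F_1(\lambda+k,1;1+k;\rho)$. The whole pole of order $k$ is carried by $\rho^{-k}(1-\rho)^{-\lambda}$. This is the content of the paper's one-line remark that $1-\mu,\lambda-\mu>0$.

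One small correction: the hypothesis $\lambda-\mu\notin-\mathbb N_0$ never affects the first coefficient, which is just $\lambda/\mu$. It is needed only to guarantee $1/\Gamma(\lambda-\mu)\neq0$, that is, nonvanishing of the singular coefficient.
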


\begin{proof}
For $\mu\notin\mathbb Z$, the connection formula
\cite[formula~(15.8.4)]{DLMF}, with the normalization
\cite[formula~(15.1.2)]{DLMF}, gives
\[
{}_2F_1(\lambda-\mu,1;\lambda+1;1-\rho)
=\frac{\lambda}{\mu}
{}_2F_1(\lambda-\mu,1;1-\mu;\rho)
+\frac{\Gamma(\lambda+1)\Gamma(-\mu)}
       {\Gamma(\lambda-\mu)}
\rho^\mu(1-\rho)^{-\lambda}.
\]
The first term is analytic at $\rho=0$. For $\mu\in-\mathbb N$,
$1-\mu,\lambda-\mu>0$, so both parameter limits are finite
and the identity remains valid. For $\mu\in\mathbb N$,
\cite[formula~(15.8.10)]{DLMF} gives the logarithmic term
with an analytic remainder. Finally, $\Gamma$ has no zeros,
and the assumption on $\lambda-\mu$ keeps the denominator finite.
\end{proof}

\begin{theorem}\label{56}
Let $\sigma\in\mathbb C\setminus\{0\}$ and
$\sigma-\frac{n}{r}\notin\mathbb N_0$.  Put \(M=\max\{0,\lceil\operatorname{Re}\sigma\rceil\}\).
If $f\in L^2(S)$ and
$F=P_{\sigma,\frac{n}{r}}f\in C^M(\overline\Omega)$, then $F$ is holomorphic.
\end{theorem}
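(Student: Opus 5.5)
The strategy is to test antiholomorphic derivatives at the origin and then move that conclusion around by $\widetilde G$. By \eqref{27}, for every $\widetilde g\in\widetilde G$,
\[
F_{\widetilde g}:=P_{\sigma,\frac nr}\bigl(\pi_{\sigma,\frac nr}(\widetilde g)f\bigr)
=J_{\widetilde g^{-1}}^{(\frac nr-\sigma)/p_\Omega}\,(F\circ g^{-1}),
\]
with a holomorphic, nonvanishing Jacobian factor. Hence $(\bar\partial F_{\widetilde g})_0=0$ for all $\widetilde g$ implies $\bar\partial F=0$ at every point $g^{-1}0$, that is, on all of $\Omega$, exactly as in the proof of Lemma~\ref{35}. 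By Lemma~\ref{29}, each $F_{\widetilde g}$ has the same $C^M$ regularity as $F$. It therefore suffices to show: if $F=P_{\sigma,\frac nr}f\in C^M(\overline\Omega)$, then $(\bar\partial F)_0=0$.

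\textbf{Step 1: compute $(\bar\partial_jF)(0)$.} Use the expansion \eqref{20}. Since $h(z,z)^\sigma=1-\sigma|z|^2+\cdots$ has vanishing first derivatives at $0$, only the term with $\alpha=0$, $|\beta|=1$ contributes. Because $K^F_{(1)}(\omega,z)=\langle\omega,z\rangle$, this gives
\[
(\bar\partial_jF)(0)=\sigma\int_S f(\omega)\,\omega_j\,du(\omega).
\]
Thus $(\bar\partial_jF)(0)$ is $\sigma$ times the pairing of $f$ with the antiholomorphic linear functions $\overline{\ell_\xi}|_S$. Let $\tau$ be the $K$-type of $\overline{\mathcal P_{(1)}}$. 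By Frobenius reciprocity, its multiplicity in $L^2(S)$ equals $\dim\overline{\mathcal P_{(1)}}^{K_1}=\dim\mathcal P_{(1)}^{K_1}=1$, using \eqref{12}. So $L^2(S)_\tau=\{\overline{\ell_\xi}|_S:\xi\in V\}$, and $\Pi^S_\tau f=\overline{\ell_{\xi_0}}$ for some $\xi_0$. Since $\sigma\ne0$, the goal becomes $\xi_0=0$.

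\textbf{Step 2: reduce to a minimal tripotent.} Suppose $\xi_0\neq0$. By Lemma~\ref{29} and \eqref{11}, $P_{\sigma,\frac nr}\overline{\ell_{\xi_0}}\in C^M(\overline\Omega)$. $K$-translates and finite linear combinations preserve this regularity, and the $K$-translates of $\overline{\ell_{\xi_0}}$ span the irreducible space $\overline{\mathcal P_{(1)}}|_S$. Hence $P_{\sigma,\frac nr}\overline{\ell_c}\in C^M(\overline\Omega)$ for a minimal tripotent $c$. By \eqref{46} with $\theta=\frac nr$, and then Euler's transformation (here $c-a-b=-\sigma$), with $\rho=1-t^2$,
\[
P_{\sigma,\frac nr}(\overline{\ell_c})(tc)
=\frac{r\sigma}{n}\,t\,{}_2F_1\Bigl(\tfrac nr-\sigma,1;\tfrac nr+1;1-\rho\Bigr).
\]

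\textbf{Step 3: the radial obstruction, which is the main point.} Apply Lemma~\ref{551} with $\lambda=\frac nr$ and $\mu=\sigma$. Its hypotheses hold because $\sigma\ne0$ and $\lambda-\mu=\frac nr-\sigma\notin-\mathbb N_0$ by assumption. The function then equals an analytic function of $\rho$ plus a nonzero multiple of $\rho^\sigma(1-\rho)^{-\lambda}$, or of $\rho^\sigma\log\rho\,(1-\rho)^{-\lambda}$ when $\sigma\in\mathbb N$.

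I then check that this singular term is not $C^M$ at $\rho=0$ for $M=\max\{0,\lceil\operatorname{Re}\sigma\rceil\}$. Since $\rho=1-t^2$ is a smooth submersion near $t=1$, this is the same as failure of $C^M$ in $t$ along the segment $t\mapsto tc$.
\begin{itemize}
\item If $\operatorname{Re}\sigma<0$, the term is unbounded.
\item If $\operatorname{Re}\sigma=0$ and $\sigma\ne0$, then $\rho^{i\operatorname{Im}\sigma}$ is bounded but has no limit, so it is not continuous.
\item If $\operatorname{Re}\sigma>0$ and $\sigma\notin\mathbb N$, the $M$-th derivative behaves like a nonzero multiple of $\rho^{\sigma-M}$ with $\operatorname{Re}(\sigma-M)\le0$. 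This is unbounded when $\operatorname{Re}(\sigma-M)<0$, and oscillates without a limit when $\operatorname{Re}(\sigma-M)=0$; here $\operatorname{Im}\sigma\ne0$, since $\operatorname{Re}\sigma$ must be an integer and $\sigma\notin\mathbb N$.
\item If $\sigma\in\mathbb N$, the $\sigma$-th derivative of $\rho^\sigma\log\rho$ contains $\sigma!\log\rho$, which is unbounded.
\end{itemize}
This contradicts $P_{\sigma,\frac nr}\overline{\ell_c}\in C^M(\overline\Omega)$. Hence $\xi_0=0$, so $(\bar\partial F)_0=0$. Applying this to every $F_{\widetilde g}$ gives $\bar\partial F\equiv0$.

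I expect the borderline cases $\operatorname{Re}\sigma\in\mathbb Z$ in Step 3 to be the delicate part, since there the obstruction is oscillation or a logarithm rather than blow-up. The multiplicity-one claim in Step 1 is what allows the regularity of $F$ to be transferred to the single function $P_{\sigma,\frac nr}\overline{\ell_c}$.
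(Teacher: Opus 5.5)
Your proposal is correct and follows essentially the same route as the paper. Both arguments project $f$ onto the multiplicity-one $K$-type $\overline{\mathcal P_{(1)}}|_S$, transfer the $C^M$ regularity to $P_{\sigma,\frac nr}\overline{\ell_c}$, and reach a contradiction from the radial formula via Lemma~\ref{551}, before finishing with \eqref{20} at the origin, the covariance \eqref{27}, and transitivity. The differences are cosmetic: you compute $\bar\partial F(0)$ before the radial test, and you treat the borderline cases $\operatorname{Re}\sigma\in\mathbb Z$ (the oscillation of $\rho^{i\operatorname{Im}\sigma}$) more explicitly than the paper does.
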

\begin{proof}
Let $E=\overline{\mathcal P_{(1)}}|_S$, and let $f_E$ be the orthogonal
projection of $f$ onto $E$. By the uniqueness statement following
\eqref{12}, $\mathcal P_{(1)}|_S$ occurs with multiplicity one in $L^2(S)$.
Complex conjugation preserves multiplicities, so $E$ is the full
$K$-isotypic component of its type. Lemma~\ref{29} gives
$P_{\sigma,n/r}f_E\in C^M(\overline\Omega)$. Since $P_{\sigma,n/r}$
intertwines the $K$-actions and $K$ preserves $C^M(\overline\Omega)$,
the space
\[
\{v\in E:P_{\sigma,n/r}v\in C^M(\overline\Omega)\}
\]
is $K$-invariant. If $f_E\ne0$, irreducibility of $E$ makes this space
equal to $E$.

By Lemma~\ref{16}, $\overline{\ell_c}|_S\in E$ for every minimal tripotent
$c$. If $f_E\ne0$, it follows that
$$P_{\sigma,n/r}(\overline{\ell_c})\in C^M(\overline\Omega).$$
We test this regularity on the radius $z=tc$. Put $\rho=1-t^2$.
By \eqref{46} and Euler's transformation
\cite[formula~(15.8.1)]{DLMF},
\[
P_{\sigma,n/r}(\overline{\ell_c})(tc)
=\frac{r\sigma}{n}\sqrt{1-\rho}\,
{}_2F_1\left(\frac nr-\sigma,1;\frac nr+1;1-\rho\right).
\]
The hypothesis $\sigma-\frac nr\notin\mathbb N_0$ gives
$\frac nr-\sigma\notin-\mathbb N_0$.
Lemma~\ref{551}, with $\lambda=\frac nr$ and $\mu=\sigma$,
gives the following expansion near $\rho=0$:
\[
\begin{aligned}
{}_2F_1\left(\frac nr-\sigma,1;\frac nr+1;1-\rho\right)
={}&H(\rho)+(1-\rho)^{-n/r}
\begin{cases}
\displaystyle
\frac{\Gamma(\frac nr+1)\Gamma(-\sigma)}
     {\Gamma(\frac nr-\sigma)}\,\rho^\sigma,
&\sigma\notin\mathbb N_0,\\[3mm]
\displaystyle
\frac{(-1)^{\sigma+1}\Gamma(\frac nr+1)}
     {\sigma!\,\Gamma(\frac nr-\sigma)}
     \,\rho^\sigma\log\rho,
&\sigma\in\mathbb N.
\end{cases}
\end{aligned}
\]
Here $H$ is analytic near $\rho=0$, and both displayed coefficients
are nonzero.

Since $(1-\rho)^{-n/r}$ and $\sqrt{1-\rho}$ are analytic
and nonzero at $\rho=0$, hence they do not remove the singularity
found above. If $\sigma\notin\mathbb N_0$ and
$\operatorname{Re}\sigma\le0$, then $M=0$ and $\rho^\sigma$ has no
finite limit at $0$. If $\sigma\notin\mathbb N_0$ and
$\operatorname{Re}\sigma>0$, the $M$-th derivative of $\rho^\sigma$
has a nonzero leading term proportional to $\rho^{\sigma-M}$,
which has no finite limit. Finally, if $\sigma\in\mathbb N$, then
$M=\sigma$ and the $M$-th derivative of $\rho^\sigma\log\rho$
diverges. Since $\rho=1-t^2$ is a smooth coordinate near $t=1$,
$P_{\sigma,n/r}(\overline{\ell_c})(tc)$ cannot be $C^M$ there.
This contradicts the regularity obtained from $f_E\ne0$, so
$f_E=0$.

Thus $f\perp\overline{\mathcal P_{(1)}}|_S$, or equivalently,
\[
\int_S f(\omega)p(\omega)\,du(\omega)=0
\qquad (p\in\mathcal P_{(1)}).
\]
By \eqref{9}, $h(0,0)=1$ and
$\bar\partial h(z,z)|_{z=0}=0$. Differentiating \eqref{20}
at $z=0$, only the terms with $\alpha=0$ and $\beta=(1)$
contribute to the antiholomorphic derivative. Thus
\[
\bar\partial F(0)
=
\sigma\sum_i
\left(\int_S f(\omega)p_{(1),i}(\omega)\,du(\omega)\right)
\bar\partial\overline{p_{(1),i}}(0)
=0.
\]
Now let $\widetilde g\in\widetilde G$, put $g=\pi(\widetilde g)$, and set \(F_{\widetilde g} = P_{\sigma,n/r} \pi_{\sigma,n/r}(\widetilde g)f\).
By Lemma~\ref{29}, $F_{\widetilde g}$ has the same boundary regularity,
so the preceding argument gives \(\bar\partial F_{\widetilde g}(0)=0\).
Formula~\eqref{27}
and the chain rule then give
$\bar\partial F(g^{-1}0)=0$: the Jacobian factor is
holomorphic and nonzero, and $(g^{-1})'(0)$ is invertible.
As $G$ acts transitively on $\Omega$, we obtain
$\bar\partial F=0$ on $\Omega$.
\end{proof}

\begin{theorem}\label{57}
Let $\sigma,\theta\in\mathbb C$ satisfy
\[
\sigma-\frac nr\in\mathbb N_0,
\qquad
\frac nr-\theta\in\mathbb N,
\qquad
\theta\notin-\mathbb N_0.
\]
If $f\in L^2(S)$ and \(F=P_{\sigma,\theta}f \in\mathcal R^\sigma(\overline\Omega)\),
then
\begin{equation}\label{58}
\mathcal D_{\sigma-\frac nr+1}
\left(
h(z,z)^{\theta-\frac nr}F
\right)=0.
\end{equation}
\end{theorem}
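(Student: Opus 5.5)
The plan is to recast \eqref{58} as the covariant equation $\mathbb D_{L,j}F=0$, where
\[
L=\sigma-\tfrac nr\in\mathbb N_0,\qquad j=\tfrac nr-\theta\in\mathbb N .
\]
Then $\nu=\theta-\sigma=-(L+j)$, so $F$ is the scalar representative of a section of $L^\Omega_{-(L+j)}$, and \eqref{58} reads $\mathcal D_{L+1}(h(z,z)^{-j}F)=0$, which is $\mathbb D_{L,j}F=0$ by \eqref{36}. By \eqref{27}, $P_{\sigma,\theta}\pi_{\sigma,\theta}(\widetilde g)f=T_{-(L+j)}(\widetilde g)F$. Proposition~\ref{37} then shows that $\mathbb D_{L,j}F$ vanishes at $g^{-1}0$ once $\mathbb D_{L,j}F_{\widetilde g}(0)=0$, because the Jacobian factor and $B_g(0)$ are invertible. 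Lemma~\ref{29} gives $F_{\widetilde g}\in\mathcal R^\sigma(\overline\Omega)$, and $G$ is transitive on $\Omega$. So it suffices to prove $\mathcal D_{L+1}(h^{-j}F)(0)=0$ for every $f$ with $F=P_{\sigma,\theta}f\in\mathcal R^\sigma(\overline\Omega)$.

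\emph{Computation at the origin.} Write $h^{-j}F=h(z,z)^{\sigma-j}\cdot\bigl(h^{-\sigma}F\bigr)$. Since $h(z,0)=1$, every purely holomorphic derivative of $h(z,z)^{\kappa}$ vanishes at $0$, so $\mathcal D_{L+1}$ at $0$ only sees $h^{-\sigma}F$. In the expansion \eqref{20}, holomorphic derivatives of order $L+1$ at $0$ pick out the terms with $\beta=0$ and $|\alpha|=L+1$. Pairing with $p^*(\partial)$ for $p\in\mathcal P_{(L+1)}$ and using Fischer orthogonality of the types, I expect
\[
\bigl(\mathcal D_{L+1}(h^{-j}F)(0)\bigr)(p^*)=(\theta)_{(L+1)}\int_S f\,\overline{p}\,du .
\]
This vanishes once $f\perp\mathcal P_{(L+1)}|_S$. (The hypothesis $\theta\notin-\mathbb N_0$ makes $(\theta)_{L+1}\neq0$, so orthogonality is in fact also necessary.)

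\emph{Killing the $\mathcal P_{(L+1)}$-component.} I argue as in Theorem~\ref{56}. Put $q=L+1$. By the multiplicity-one remark after \eqref{12}, $\mathcal P_{(q)}|_S$ is a full $K$-isotypic component. Lemma~\ref{29} and irreducibility show that if the projection of $f$ onto it is nonzero, then $P_{\sigma,\theta}(\ell_c^{q})\in\mathcal R^\sigma(\overline\Omega)$ for a minimal tripotent $c$; here Lemma~\ref{16} gives $\ell_c^q\in\mathcal P_{(q)}$. Formula \eqref{47}, with $\frac nr+q-\sigma=1$ and $\rho=1-t^2$, gives
\[
P_{\sigma,\theta}(\ell_c^{q})(tc)=\frac{(\theta)_q}{(\frac nr)_q}\,t^q\rho^{\,j}\,{}_2F_1\!\left(1,j;\sigma+1;1-\rho\right).
\]
The prefactor is nonzero. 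Apply Lemma~\ref{551} with $\lambda=\sigma>0$ and $\mu=\sigma-j=\theta+L$. Then $\lambda-\mu=j\notin-\mathbb N_0$, and $\mu\neq0$ because $\theta\neq-L$.

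\emph{Case check.} After multiplying by $\rho^j$, the singular part is a nonzero multiple of $\rho^\sigma$ when $\mu\notin\mathbb N_0$, and of $\rho^\sigma\log\rho$ when $\mu\in\mathbb N$, times analytic nonvanishing factors. The delicate point, which I expect to be the main obstacle, is ruling out a harmless singular term, i.e.\ $\sigma\in\mathbb N$ with pure $\rho^\sigma$.
\begin{itemize}
\item If $\sigma\in\mathbb N$, then $\mu=\sigma-j$ is an integer. A negative integer $\mu$ would give $\theta=\mu-L\in-\mathbb N$, which is excluded. So $\mu\in\mathbb N$, the term is $\rho^\sigma\log\rho$, and its $\sigma$-th derivative diverges, which contradicts $C^\sigma$.
\item If $\sigma\notin\mathbb N$, then $\mu\notin\mathbb Z$ and the term is $\rho^\sigma$. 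It is not $C^{\lfloor\sigma\rfloor,\varepsilon}$ for any $\varepsilon>\sigma-\lfloor\sigma\rfloor$, which is exactly what membership in $\mathcal R^\sigma$ as defined in \eqref{2} requires.
\end{itemize}
Since $\rho$ is a smooth coordinate near $t=1$, either case contradicts the regularity. Hence the projection vanishes, $f\perp\mathcal P_{(L+1)}|_S$, and the covariance argument above completes the proof.
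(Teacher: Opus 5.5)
Your proposal is correct and follows the paper's proof essentially step for step. It uses the same origin computation from \eqref{20} giving $(\theta)_{q}\int_S f\,\overline{p}\,du$. It uses the same isotypic/irreducibility reduction to $P_{\sigma,\theta}(\ell_c^{q})$, the same radial formula \eqref{47} with Lemma~\ref{551} at $\lambda=\sigma$, $\mu=\sigma-j$, and the same integer/non-integer case split for $\sigma$. The only cosmetic difference is that you move the identity from the origin to other points via Proposition~\ref{37} instead of formula~\eqref{380} directly; since $h(y,y)\neq0$ in $\Omega$, the two routes are equivalent.
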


\begin{proof}
Put
\[
q=\sigma-\frac nr+1\in\mathbb N,
\qquad
j_0=\frac nr-\theta\in\mathbb N,
\qquad
H(z)=h(z,z)^{-j_0}F(z).
\]
We have $H=h(z,z)^{\sigma-j_0}(h(z,z)^{-\sigma}F)$.
Since $h(z,w)$ is antiholomorphic in $w$ and $h(z,0)=1$,
for every nonzero multi-index $\gamma$,
\[
\left.\partial_z^\gamma h(z,z)^{\sigma-j_0}\right|_{z=0}
=\left.\partial_z^\gamma h(z,0)^{\sigma-j_0}\right|_{z=0}
=0.
\]
Also $h(0,0)^{\sigma-j_0}=1$. The product rule therefore gives
$$p^*(\partial)H(0)=p^*(\partial)(h(z,z)^{-\sigma}F)(0)\ \
\mbox{for every}\  p\in\mathcal P_{(q)}.$$
Apply $p^*(\partial)$ to \eqref{20} and evaluate at $z=0$.
Terms with $\beta\ne0$ vanish because they still contain
$\bar z$. For the terms with $\beta=0$, the definition of the
Fischer inner product gives
\[
p^*(\partial)p_{\alpha,i}(0)
=\langle p_{\alpha,i},p\rangle_F.
\]
This vanishes unless $\alpha=(q)$, since
$p\in\mathcal P_{(q)}$ and the spaces $\mathcal P_\alpha$
are Fischer-orthogonal. For $\alpha=(q)$, expansion in the
Fischer-orthonormal basis gives
\[
\sum_i\langle p_{(q),i},p\rangle_F
\overline{p_{(q),i}(\omega)}=\overline{p(\omega)}.
\]
Using $(\theta)_{(q)}=(\theta)_q$, we obtain
\[
\bigl(\mathcal D_qH(0)\bigr)(p^*)
=(\theta)_q
\int_S f(\omega)\overline{p(\omega)}\,du(\omega),
\qquad p\in\mathcal P_{(q)}.
\]
The coefficient $(\theta)_q$ is nonzero by the assumption
$\theta\notin-\mathbb N_0$.

Let $E=\mathcal P_{(q)}|_S$, and let $f_E$ be the orthogonal
projection of $f$ onto $E$. By the uniqueness statement following
\eqref{12}, $E$ is the full $K$-isotypic component of its type.
The identity above shows that $\mathcal D_qH(0)\ne0$ implies
$f_E\ne0$. As in the proof of Theorem~\ref{56},
Lemma~\ref{29} and irreducibility of $E$ then give
$
P_{\sigma,\theta}E\subset\mathcal R^\sigma(\overline\Omega).
$
Lemma~\ref{16} therefore gives
\[
P_{\sigma,\theta}(\ell_c^q)
\in\mathcal R^\sigma(\overline\Omega)
\ \ 
\mbox{for every minimal tripotent}\ c.\]
By \eqref{47},
\[
P_{\sigma,\theta}(\ell_c^q)(tc)
=
\frac{(\theta)_q}{(\frac nr)_q}\,
t^q(1-t^2)^{j_0}
{}_2F_1(1,j_0;\sigma+1;t^2).
\]
Put $\rho=1-t^2$. If $\sigma\in\mathbb N$, then
$\frac nr\in\mathbb N$, and $\theta\notin-\mathbb N_0$
forces $\theta\in\mathbb N$. Thus
$\sigma-j_0=q-1+\theta\in\mathbb N$.
If $\sigma\notin\mathbb N$, then $\sigma-j_0$ is not an
integer, since $j_0\in\mathbb N$. Thus Lemma~\ref{551}
applies with $\lambda=\sigma$ and $\mu=\sigma-j_0$;
in particular, $\lambda-\mu=j_0>0$.
Using $t^q=(1-\rho)^{q/2}$ in the radial formula gives
\[
\begin{aligned}
P_{\sigma,\theta}(\ell_c^q)(tc)
={}&R(\rho)+\frac{(\theta)_q}{(\frac nr)_q}
(1-\rho)^{q/2-\sigma}
\begin{cases}
\displaystyle
\frac{\Gamma(\sigma+1)\Gamma(j_0-\sigma)}
     {\Gamma(j_0)}\,\rho^\sigma,
&\sigma\notin\mathbb N,\\
\displaystyle
\frac{(-1)^{\sigma-j_0+1}\Gamma(\sigma+1)}
     {(\sigma-j_0)!\,\Gamma(j_0)}
     \,\rho^\sigma\log\rho,
&\sigma\in\mathbb N.
\end{cases}
\end{aligned}
\]
Here $R$ is analytic near $\rho=0$, and the coefficient of the
singular term in each case is nonzero. Since
$(1-\rho)^{q/2-\sigma}$ is analytic and nonzero at $0$,
when $\sigma\notin\mathbb N$ the term
$\rho^\sigma(1-\rho)^{q/2-\sigma}$ does not belong to
$C^{\lfloor\sigma\rfloor,\varepsilon}$ for any
$\varepsilon>\sigma-\lfloor\sigma\rfloor$.
When $\sigma\in\mathbb N$, the term
$\rho^\sigma\log\rho\,(1-\rho)^{q/2-\sigma}$ is not $C^\sigma$.
Thus the radial function does not have the regularity required by
$\mathcal R^\sigma$.
This contradicts the regularity of
$P_{\sigma,\theta}(\ell_c^q)$. Therefore
$\mathcal D_qH(0)=0$.

For $\widetilde g\in\widetilde G$, set
\[
F_{\widetilde g}
=P_{\sigma,\theta}\pi_{\sigma,\theta}(\widetilde g)f,
\qquad
H_{\widetilde g}
=h(z,z)^{-j_0}F_{\widetilde g}.
\]
Lemma~\ref{29} allows the same argument to be applied to
$F_{\widetilde g}$, so $\mathcal D_qH_{\widetilde g}(0)=0$.
Writing $g=\pi(\widetilde g)$, formula~\eqref{380} with
$L=q-1$ gives
\[
\mathcal D_qH_{\widetilde g}(0)
=
\overline{J_{\widetilde g^{-1}}(0)}^{\,j_0/p_\Omega}
J_{\widetilde g^{-1}}(0)^{-(q-1)/p_\Omega}
B_g(0)(\mathcal D_qH)(g^{-1}0).
\]
The Jacobian factors are nonzero and $B_g(0)$ is invertible.
Thus $(\mathcal D_qH)(g^{-1}0)=0$ for every $g\in G$.
Transitivity of $G$ gives $\mathcal D_qH=0$ on $\Omega$,
which is \eqref{58}.
\end{proof}

To treat the remaining case $\theta=\frac nr$ and
$\sigma-\frac nr\in\mathbb N_0$, define, for $q\ge1$,
\begin{equation}\label{59}
\mathcal M_q
=
\operatorname{span}
\left\{
\left(
p\bar z_j-\frac{\partial_jp}{\frac nr+q-1}
\right)\Big|_S:
p\in\mathcal P_{(q)},\ 1\le j\le n
\right\}.
\end{equation}

\begin{lemma}
For $q\ge1$, the space $\mathcal M_q$ is $K$-invariant,
and the following decomposition is orthogonal:
\begin{equation}\label{60}
\operatorname{span}
\left\{
p\overline\ell|_S:
p\in\mathcal P_{(q)},\ \ell\in\mathcal P_{(1)}
\right\}
=
\mathcal P_{(q-1)}|_S\oplus\mathcal M_q.
\end{equation}
\end{lemma}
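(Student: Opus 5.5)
The plan is to treat the three assertions in turn: $K$-invariance of $\mathcal M_q$, the inclusion of $\mathcal P_{(q-1)}|_S+\mathcal M_q$ in the left-hand side of \eqref{60} together with the reverse inclusion, and orthogonality of the two summands.

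\emph{$K$-invariance.} Write $T(p,\xi)=p\overline{\ell_\xi}|_S$ for $p\in\mathcal P_{(q)}$ and $\xi\in V$. Since $\bar z_j=\overline{\ell_{e_j}}$ in orthonormal coordinates, the generators of $\mathcal M_q$ are
\[
\bigl(p\bar z_j-\Pi_H(p\bar z_j|_S)\bigr)\Big|_S=(I-\Pi_H)(p\bar z_j|_S),
\]
by Lemma~\ref{21}. Hence $\mathcal M_q=(I-\Pi_H)W$, where $W$ is the left-hand side of \eqref{60}. The space $W$ is $K$-invariant, because $K$ acts linearly and unitarily on $V$ and preserves $\mathcal P_{(q)}$ and $\mathcal P_{(1)}$. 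The projection $\Pi_H$ commutes with $K$, since $H^2(S)$ is $K$-invariant and $K$ acts unitarily on $L^2(S)$, the measure $du$ being $K$-invariant. It follows that $\mathcal M_q$ is $K$-invariant.

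\emph{The decomposition.} For $w\in W$ write $w=\Pi_Hw+(I-\Pi_H)w$. By Lemma~\ref{21}, $\Pi_H(p\bar z_j|_S)=\partial_jp/(n/r+q-1)$. The proof of that lemma shows $\partial_jp\in\mathcal P_{(q-1)}$, so $\Pi_HW\subset\mathcal P_{(q-1)}|_S$. This gives $W\subset\mathcal P_{(q-1)}|_S+\mathcal M_q$.

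For the reverse inclusion, $\mathcal M_q\subset W+\Pi_HW$, so it suffices to show $\mathcal P_{(q-1)}|_S\subset W$.
\begin{itemize}
\item Take $\xi$ of rank one, normalized so that $\|\xi\|=1$ when $\xi$ is a minimal tripotent $c$. By Lemma~\ref{16}, $\ell_\xi^q\in\mathcal P_{(q)}$.
\item On $S$ one expects $\ell_c^{q}\overline{\ell_c}$ not to reduce to $\ell_c^{q-1}$, so instead I compute the projection. Lemma~\ref{21} gives
\[
\Pi_H(\ell_\xi^q\overline{\ell_\xi})=\frac{q\|\xi\|^2}{n/r+q-1}\,\ell_\xi^{q-1}.
\]
\item Hence $\Pi_HW$ contains every $\ell_\xi^{q-1}$, and these span $\mathcal P_{(q-1)}$ by Lemma~\ref{16}. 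When $q=1$ the constants are obtained in the same way.
\end{itemize}
The key remaining point is to show that $\mathcal P_{(q-1)}|_S$ itself lies in $W$. For this I use $K$-types. By $K$-equivariance, the $K$-isotypic component of $W$ of the type of $\mathcal P_{(q-1)}$ is the image under the $K$-equivariant projection $\Pi_\tau^S$, which preserves $W$ since $W$ is $K$-invariant. By multiplicity one (the statement after \eqref{12}), that component is contained in $\mathcal P_{(q-1)}|_S$. It is nonzero because $\Pi_H W\neq0$ and $\Pi_H$ restricted to $W$ lands in that type. By irreducibility it therefore equals $\mathcal P_{(q-1)}|_S$, and so $\mathcal P_{(q-1)}|_S\subset W$.

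\emph{Orthogonality.} This is immediate: $\mathcal M_q\subset(I-\Pi_H)L^2(S)\perp H^2(S)\supset\mathcal P_{(q-1)}|_S$.

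The main obstacle is the reverse inclusion, specifically showing $\mathcal P_{(q-1)}|_S\subset W$. Knowing only $\Pi_HW\supset\mathcal P_{(q-1)}|_S$ is not enough; the isotypic-component argument is needed. If the multiplicity of that $K$-type in $L^2(S)$ were an issue, I would instead use the Fischer-adjoint identity behind Lemma~\ref{21} to decide whether $W$ contains nonzero holomorphic vectors directly.
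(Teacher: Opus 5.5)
Your proof is correct and follows essentially the same route as the paper: identify $\mathcal M_q=(I-\Pi_H)W$ and $\Pi_HW=\mathcal P_{(q-1)}|_S$ via Lemma~\ref{21}, then use multiplicity one to see that the isotypic projection $\Pi^S_\tau$ maps $W$ into itself, so that $\mathcal P_{(q-1)}|_S\subset W$. Your ``nonzero'' step is stated rather tersely; it is cleanest to justify it as the paper does, by noting that $\Pi^S_\tau w=\Pi_Hw$ for every $w\in W$ (because $(I-\Pi_H)w\perp H^2(S)\supset\mathcal P_{(q-1)}|_S$), which gives $\Pi_HW=\Pi^S_\tau W\subset W$ directly and makes the irreducibility argument unnecessary.
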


\begin{proof}
Let $E$ denote the space on the left of \eqref{60}.
Both polynomial factors are $K$-invariant, hence so is $E$.
By Lemma~\ref{16} and
$\partial_j\ell_\xi^q=q\overline{\xi_j}\ell_\xi^{q-1}$,
\[
\mathcal P_{(q-1)}
=
\operatorname{span}\{\ell_\xi^{q-1}:\operatorname{rank}\xi=1\}
=
\operatorname{span}\{\partial_jp:
p\in\mathcal P_{(q)},\ 1\le j\le n\}.
\]
Lemma~\ref{21} and the definition \eqref{59} therefore give
\[
\Pi_HE=\mathcal P_{(q-1)}|_S,
\qquad
(I-\Pi_H)E=\mathcal M_q.
\]

Let $\tau$ be the $K$-type of $\mathcal P_{(q-1)}|_S$.
By the uniqueness statement following \eqref{12},
$\Pi_\tau^S$ is the orthogonal projection onto this space.
For $v\in E$, we have $\Pi_Hv\in\mathcal P_{(q-1)}|_S$
and $v-\Pi_Hv\perp H^2(S)$, so
$\Pi_\tau^Sv=\Pi_Hv$. Since $E$ is $K$-invariant,
the character formula gives $\Pi_\tau^SE\subset E$.
Thus $\Pi_HE\subset E$, and
\[
E=\Pi_HE\oplus(I-\Pi_H)E
 =\mathcal P_{(q-1)}|_S\oplus\mathcal M_q.
\]
The sum is orthogonal because $\Pi_H$ is an orthogonal
projection. Finally, $\Pi_H$ commutes with $K$, so
$\mathcal M_q=(I-\Pi_H)E$ is $K$-invariant.
\end{proof}

\begin{lemma}\label{61}
Assume that \(\sigma-\frac nr\in\mathbb N_0, \quad q=\sigma-\frac nr+1\).
For $f\in L^2(S)$,  let \(F=P_{\sigma,\frac nr}f \).
Then, for $p\in\mathcal P_{(q)}$,
\begin{equation}\label{62}
\bar\partial_j\bigl(p^*(\partial)F\bigr)(0)
=
\sigma\left(\frac nr\right)_{(q)}
\left\langle
f,\,
p\bar z_j-\frac{\partial_jp}{\sigma}
\right\rangle_S.
\end{equation}
In particular, \(f\perp\mathcal M_q \quad\Longrightarrow\quad \bar\partial\mathcal D_qF(0)=0\).
\end{lemma}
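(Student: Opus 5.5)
The plan is to read off the bidegree-$(q,1)$ Taylor coefficient of $F$ at the origin from the expansion \eqref{20} with $\theta=\frac nr$. Write $F=h(z,z)^\sigma G(z)$, where $G$ is the double series in \eqref{20}. Then apply $\bar\partial_j$ and $p^*(\partial)$ at $0$. Only monomials $z^\gamma\bar z_j$ with $|\gamma|=q$ survive this operation. So I only need the terms of bidegree $(q,1)$ in the product of $h(z,z)^\sigma$ and $G$.

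First, I expand $h(z,z)^\sigma$. By \eqref{13} with $u=-1$, the function $h(z,w)$ is a sum of terms of bidegree $(k,k)$, with $(1,1)$ part $-\langle z,w\rangle$ by \eqref{14}. Hence $h(z,z)^\sigma=1-\sigma\langle z,z\rangle+(\text{bidegree }(k,k),\ k\ge2)$. In $G$, the term indexed by $(\alpha,\beta)$ has bidegree $(|\alpha|,|\beta|)$. Since $h(z,z)^\sigma$ has no $(0,1)$ part, the $(q,1)$ part of $F$ has exactly two sources:
\begin{itemize}
\item[(a)] the constant $1$ times the terms $\alpha=(q)$, $\beta=(1)$ of $G$;
\item[(b)] $-\sigma\sum_k z_k\bar z_k$ times the terms $\alpha=(q-1)$, $\beta=0$ of $G$.
\end{itemize}
The types $\alpha$ with $|\alpha|=q$, $\alpha\ne(q)$ are killed by $p^*(\partial)$ through Fischer orthogonality, and the same holds in degree $q-1$ in (b).

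For (a), I take the Fischer-orthonormal basis $p_{(1),i'}=z_{i'}$ of $\mathcal P_{(1)}$. Then $\bar\partial_j$ selects $i'=j$. Next, $p^*(\partial)p_{(q),i}(0)=\langle p_{(q),i},p\rangle_F$, and the reproducing identity $\sum_i\langle p_{(q),i},p\rangle_F\overline{p_{(q),i}}=\overline p$ turns this contribution into
\[
(\tfrac nr)_q\,\sigma\int_S f\,\overline{p}\,z_j\,du=\sigma(\tfrac nr)_q\langle f,p\bar z_j\rangle_S .
\]
For (b), $\bar\partial_j$ yields $-\sigma z_j\,u(z)$, where $u=(\frac nr)_{q-1}\sum_i p_{(q-1),i}\int_S f\overline{p_{(q-1),i}}\,du$. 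Fischer adjointness gives $p^*(\partial)(z_ju)(0)=\langle z_ju,p\rangle_F=\langle u,\partial_jp\rangle_F$. Since $\partial_jp\in\mathcal P_{(q-1)}$ (proof of Lemma~\ref{21}), the reproducing identity gives the contribution
\[
-\sigma(\tfrac nr)_{q-1}\langle f,\partial_jp\rangle_S .
\]
The key arithmetic is $\sigma=\frac nr+q-1$, which gives $\sigma(\frac nr)_{q-1}=(\frac nr)_q$. Hence the sum of (a) and (b) is $\sigma(\frac nr)_q\langle f,p\bar z_j-\partial_jp/\sigma\rangle_S$, which is \eqref{62}. Here $(\frac nr)_{(q)}=(\frac nr)_q$.

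For the final assertion, the same identity $\sigma=\frac nr+q-1$ shows that $p\bar z_j-\partial_jp/\sigma$ is exactly a generator of $\mathcal M_q$ in \eqref{59}. So $f\perp\mathcal M_q$ makes every $\bar\partial_j(p^*(\partial)F)(0)$ vanish, that is, $\bar\partial\mathcal D_qF(0)=0$.

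The main obstacle is bookkeeping. I must justify that no other bidegree-$(q,1)$ contributions arise, in particular from the higher $(k,k)$ parts of $h^\sigma$ and from nonrow types $\alpha$. The local uniform convergence of \eqref{20} with derivatives permits termwise differentiation.
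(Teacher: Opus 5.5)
Your proposal is correct and is essentially the paper's argument. The paper polarizes $F$ to $\widetilde F(z,w)=h(z,w)^\sigma\int_S h(z,\omega)^{-n/r}h(\omega,w)^{-\sigma}f(\omega)\,du(\omega)$ and differentiates in $\bar w_j$ under the integral at $w=0$, which yields exactly your contributions (a) and (b); it then evaluates $p^*(\partial_z)$ at $0$ by the Faraut--Kor\'anyi expansion and Fischer adjointness and combines the two terms using $\sigma(\frac nr)_{q-1}=(\frac nr)_q$, so it avoids the double series \eqref{20}, but your bidegree bookkeeping with termwise differentiation is equally valid.
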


\begin{proof}
For $z,w$ near $0$, put
\[
\widetilde F(z,w)
=
h(z,w)^\sigma
\int_S
h(z,\omega)^{-\frac nr}
h(\omega,w)^{-\sigma}
f(\omega)\,du(\omega).
\]
Then $\widetilde F(z,z)=F(z)$. By \eqref{13} with $u=1$ and the
degree-one case of \eqref{14},
$$h(x,w)^{-1}=1+\sum_{k=1}^n x_k\bar w_k+O(|w|^2),\ \ 
\mbox{as}\ w\to0.$$ Since $h(x,0)=1$, differentiating the above formula gives
$$\bar\partial_{w_j}h(x,w)\big|_{w=0}=-x_j.$$
As $f\in L^2(S)\subset L^1(S)$, we may differentiate under the
integral near the origin and obtain
\[
\bar\partial_{w_j}\widetilde F(z,0)
=
\sigma\left[
\int_S h(z,\omega)^{-\frac nr}\omega_jf(\omega)\,du(\omega)
-z_j\int_S h(z,\omega)^{-\frac nr}f(\omega)\,du(\omega)
\right].
\]
For $p\in\mathcal P_{(q)}$, the Faraut--Kor\'anyi expansion and Fischer
adjointness give
\[
p^*(\partial_z)h(z,\omega)^{-\frac nr}\Big|_{z=0}
=
\left(\frac nr\right)_{(q)}\overline{p(\omega)}
\]
and
\[
p^*(\partial_z)
\left[z_jh(z,\omega)^{-\frac nr}\right]\Big|_{z=0}
=
\left(\frac nr\right)_{(q-1)}
\overline{\partial_jp(\omega)}.
\]
Since \(\left(\frac nr\right)_{(q)} = \sigma\left(\frac nr\right)_{(q-1)}\),
restriction to the diagonal gives \eqref{62}.
\end{proof}

\section{The tube type case}

\label{63}

Assume throughout this section that $\Omega$ is of tube type.
The unit disk is the only rank-one tube-type domain, so $r>1$.
Thus \(b=0\) and \(\frac nr=1+\frac a2(r-1)\).

\begin{lemma}
\label{64}
For every $q\ge1$, the space $\mathcal M_q$ in \eqref{59} is
irreducible and is the full $K$-isotypic component of its type in
$L^2(S)$.
\end{lemma}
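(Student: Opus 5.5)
Since $L^2(S)$ is multiplicity free, it suffices to show that $\mathcal M_q$ is irreducible and nonzero; being the full isotypic component then follows automatically. To see irreducibility, I will rewrite the antiholomorphic factor in \eqref{59} through the Jordan inverse and then decompose the resulting products of holomorphic polynomial types.

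\emph{Multiplicity one.} Because $\Omega$ is of tube type, $S=K/K_1$ is a compact Riemannian symmetric space: $K_1$ is the fixed-point group of the involution $z\mapsto Q(e_0)\bar z$ restricted to $K$. Hence $(K,K_1)$ is a Gelfand pair, and every $K$-type occurs in $L^2(S)$ with multiplicity at most one. The spherical types are indexed by signatures $m=(m_1\ge\cdots\ge m_r)\in\mathbb Z^r$. They are realized as $\Delta^{-k}\mathcal P_{m+k}|_S$, where $\Delta$ is the Jordan determinant normalized by $\Delta(e_0)=1$ and $k$ is large; note that $|\Delta|=1$ on $S$. So I only need to identify the type of $\mathcal M_q$.

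\emph{Rewriting $\bar z_j$.} For $\omega\in S$ we have $\bar\omega$ (in the coordinates fixed by $e_0$) equal to the Jordan inverse $\omega^{-1}$, since $S$ consists of the unitary elements. Also $\omega^{-1}=\omega^\#/\Delta(\omega)$, where the adjoint $\omega^\#$ has coordinates spanning $\mathcal P_{(1,\ldots,1,0)}$. Consequently
\[
\operatorname{span}\{p\bar\ell|_S : p\in\mathcal P_{(q)},\ \ell\in\mathcal P_{(1)}\}
=\Delta^{-1}\bigl(\mathcal P_{(q)}\cdot\mathcal P_{(1^{r-1},0)}\bigr)\big|_S .
\]
The product $\mathcal P_{(q)}\mathcal P_{(1^{r-1},0)}$ is contained in the span of the types obtained from the partition $(q)$ by adding a vertical strip of $r-1$ boxes, as in the Pieri rule for the multiplication structure of \eqref{10}. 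These types are $(q+1,1^{r-2},0)$ and $(q,1^{r-1})$. Dividing by $\Delta$, the left side of \eqref{60} is contained in the sum of the types $(q,0,\ldots,0,-1)$ and $(q-1,0,\ldots,0)$. The second of these is $\mathcal P_{(q-1)}|_S$. By the orthogonal decomposition \eqref{60} and multiplicity one, $\mathcal M_q$ lies in the single irreducible type $(q,0,\ldots,0,-1)$. It is nonzero: for instance, $p=\ell_c^q$ with a suitable $\bar z_j$ is not holomorphic on $S$, because $\ell_c^q\overline{\ell_{c'}}$ with $c'\perp c$ is orthogonal to $H^2(S)$ and nonzero. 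Since $\mathcal M_q$ is $K$-invariant, it therefore equals that type and is irreducible.

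\emph{Main obstacle.} The hardest step is justifying the Pieri-type containment for $\mathcal P_{(q)}\mathcal P_{(1^{r-1},0)}$ in the Jordan setting. I would argue via highest weights. A product of highest-weight vectors gives $\Delta_1^{q}\,\Delta_{r-1}$, which has weight $(q+1,1^{r-2},0)$, where $\Delta_k$ denote the principal minors of the frame. Every type in the product has weight at most the sum of the two highest weights and contains $(1^{r-1})$. The $K$-types of $\operatorname{Pol}(V)$ are exactly the $\mathcal P_\alpha$, and degree counting leaves only $\alpha$ with $|\alpha|=q+r-1$, $\alpha\subset(q+1,1^{r-1})$ and $\alpha\supset(q)$. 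These conditions force $\alpha$ to be one of the two partitions listed above. If a direct dominance argument proves awkward, I would instead use \eqref{30}: the product lies in $\bigoplus_{\alpha_1\ge q}\mathcal P_\alpha$, and the ideal generated by $\mathcal P_{(1^{r-1})}$ contains only types with $\alpha_{r-1}\ge1$, which bounds the possible types from both sides.
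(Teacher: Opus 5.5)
The outline is mostly sound:
\begin{itemize}
\item multiplicity one on the compact symmetric space $S$;
\item the rewriting $\overline{\mathcal P_{(1)}}|_S=\Delta^{-1}\mathcal P_{(1^{r-1},0)}|_S$ (the paper gets this from signatures and multiplicity one rather than from $\omega^{-1}=\omega^\#/\Delta(\omega)$);
\item the identification $\Delta^{-1}\mathcal P_{(q,1^{r-1})}=\mathcal P_{(q-1)}$;
\item the nonzero vector $\ell_c^q\overline{\ell_{c'}}\in\mathcal M_q$, which validly replaces the paper's use of Zhang's recurrence formula to show the relevant type actually occurs.
\end{itemize}

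The gap is the step you flag yourself: the containment $\mathcal P_{(q)}\mathcal P_{(1^{r-1},0)}\subset\mathcal P_{(q+1,1^{r-2},0)}\oplus\mathcal P_{(q,1^{r-1})}$. Irreducibility of $\mathcal M_q$ rests entirely on it, and neither of your arguments proves it.

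``Weight at most the sum of the highest weights'' is a dominance statement, $\alpha\preceq(q+1,1^{r-2},0)$. It does not give the diagram containment $\alpha\subset(q+1,1^{r-1})$ that you then use. That diagram is just the union of the two desired answers, so the step assumes what it should prove.

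The constraints you can actually justify are $|\alpha|=q+r-1$, $\alpha_1\ge q$ from \eqref{30}, $\alpha_{r-1}\ge1$ (granting your determinantal-ideal claim), and dominance. All of them are satisfied by $\alpha=(q,2,1^{r-3},0)$ whenever $r\ge3$ and $q\ge2$; for example $(2,2,0)$ for $V=M_3(\mathbb C)$ and $q=2$. Your ideal-based fallback leaves the same three candidates. If this third type occurred in the product, $\Delta^{-1}\mathcal P_{(q,2,1^{r-3},0)}|_S$ (signature $(q-1,1,0^{r-3},-1)$) would lie in $\mathcal M_q$. Then $\mathcal M_q$ would contain two inequivalent types, so the omission is not cosmetic.

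What rules out $(q,2,1^{r-3},0)$ is the vertical-strip Pieri rule: $\mathcal P_\beta\mathcal P_{(1^k)}$ meets only the types $\mathcal P_{\beta+\epsilon}$ with $\epsilon\in\{0,1\}^r$ and $|\epsilon|=k$, i.e.\ at most one new box per row. For $M_r(\mathbb C)$ this is the classical Pieri formula for $GL_r$. For a general tube-type domain it is \cite[Theorem~1(1)]{ZhangRec95}, which is exactly what the paper invokes here.

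You could instead avoid $\mathcal P_{(1^{r-1},0)}$ altogether. View the left side of \eqref{60} as a $K$-quotient of $\mathcal P_{(q)}\otimes\overline{\mathcal P_{(1)}}$, and combine the standard bound on constituents of a tensor product with the Cartan--Helgason description of spherical highest weights. Once either input is supplied, the rest of your argument goes through.
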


\begin{proof}
For the function spaces below, their product means the span of all
pointwise products on $S$. Write $1^d$ for a tuple of $d$ ones,
with $1^0$ empty.

For tube-type domains, $S=K/K_1$ is a compact symmetric space;
see \cite[Section~1]{ZhangRec95}. Hence $L^2(S)$ contains at most
one copy of each irreducible $K$-type.

By \cite[Section~2, proof of Lemma~2]{ZhangRec95},
$\mathcal P_{(1^r)}|_S$ is one-dimensional. A nonzero element
of its conjugate is nowhere zero on $S$, since $K$ acts
transitively on $S$. Thus pointwise multiplication by this
element realizes
$\overline{\mathcal P_{(1^r)}|_S}\,
 \mathcal P_{(1^{r-1},0)}|_S$
as the product of an irreducible $K$-type with a
one-dimensional character, and this product is irreducible
\cite[Section~2.3]{FultonHarris}.

The one-dimensional $K$-type
$\overline{\mathcal P_{(1^r)}|_S}$ has signature
$(-1,\ldots,-1)$. Hence
$\overline{\mathcal P_{(1^r)}|_S}\,
\mathcal P_{(1^{r-1},0)}|_S$
has signature
$(1^{r-1},0)+(-1,\ldots,-1)=(0^{r-1},-1)$
by \cite[Lemma~3]{ZhangRec95}.
The $K$-type $\overline{\mathcal P_{(1)}|_S}$
has the same signature. Since $L^2(S)$ is multiplicity-free,
\[
\overline{\mathcal P_{(1)}|_S}
=
\overline{\mathcal P_{(1^r)}|_S}\,
\mathcal P_{(1^{r-1},0)}|_S.
\]

For the pointwise product of $\mathcal P_{(q)}|_S$ and
$\mathcal P_{(1^{r-1},0)}|_S$,
\cite[Theorem~1(1)]{ZhangRec95} allows only the two
$K$-types displayed below. The recurrence formula
\cite[Theorem~1(2)]{ZhangRec95} shows that the product
of the normalized $K_1$-fixed vectors of these two spaces
has a nonzero component in each type. Since the pointwise
product space is $K$-invariant, it contains both full types:
\[
\mathcal P_{(q)}|_S\,
\mathcal P_{(1^{r-1},0)}|_S
=
\mathcal P_{(q,1^{r-1})}|_S
\oplus
\mathcal P_{(q+1,1^{r-2},0)}|_S.
\]
Multiplying by $\overline{\mathcal P_{(1^r)}|_S}$
and using the preceding equality gives
\[
\begin{aligned}
\mathcal P_{(q)}|_S\,\overline{\mathcal P_{(1)}|_S}
&=
\overline{\mathcal P_{(1^r)}|_S}\,
\mathcal P_{(q,1^{r-1})}|_S\oplus
\overline{\mathcal P_{(1^r)}|_S}\,
\mathcal P_{(q+1,1^{r-2},0)}|_S\\
&=
\mathcal P_{(q-1)}|_S
\oplus
\overline{\mathcal P_{(1^r)}|_S}\,
\mathcal P_{(q+1,1^{r-2},0)}|_S.
\end{aligned}
\]
The last equality again follows from the signatures and
multiplicity one in $L^2(S)$. Comparing with \eqref{60},
we obtain
\[
\mathcal M_q
=
\overline{\mathcal P_{(1^r)}|_S}\,
\mathcal P_{(q+1,1^{r-2},0)}|_S.
\]
The right-hand side is irreducible, has signature
$(q,0^{r-2},-1)$, and is the full $K$-isotypic
component of that type in $L^2(S)$.
\end{proof}

For the rest of this section, fix $\sigma$ with
$\sigma-\frac nr\in\mathbb N_0$ and put $q=\sigma-\frac nr+1$.
We now compute the Poisson transform of one nonzero vector in
$\mathcal M_q$.

\begin{lemma}
\label{65} For a minimal tripotent $c$,
let \(f_{c} = \ell_{c}^{q}\overline{\ell_{c}} - \frac{q}{\sigma}\ell_{c}^{q-1}\).
Then $f_{c}\in\mathcal{M}_{q}, $ and for $0\le t<1$,
\begin{equation}
\label{66}P_{\sigma,\frac nr}f_{c}(tc) = \frac{\frac
nr-1}{\sigma+1} t^{q+1} {}_{2}F_{1}(1,1;\sigma+2;t^{2}).
\end{equation}
In particular, $P_{\sigma,\frac nr}f_{c} \notin\mathcal{R}^{\sigma}%
(\overline\Omega). $
\end{lemma}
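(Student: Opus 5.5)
We first check that $f_c\in\mathcal M_q$, then compute its radial Poisson transform on the disk through $c$ from the moment formula \eqref{389}, and finally read off the boundary singularity with Lemma~\ref{551}.

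\emph{Membership in $\mathcal M_q$.} Take $p=\ell_c^q\in\mathcal P_{(q)}$ (Lemma~\ref{16}). In orthonormal coordinates $\ell_c(z)=\sum_j z_j\overline{c_j}$, so $\overline{\ell_c}=\sum_j c_j\bar z_j$. Since $\partial_j\ell_c^q=q\overline{c_j}\ell_c^{q-1}$ and $\sum_j|c_j|^2=\|c\|^2=1$, we get
\[
\sum_j c_j\Bigl(p\bar z_j-\frac{\partial_jp}{\frac nr+q-1}\Bigr)
=\ell_c^q\overline{\ell_c}-\frac{q}{\frac nr+q-1}\ell_c^{q-1}.
\]
Because $\frac nr+q-1=\sigma$, this combination is exactly $f_c$, so $f_c\in\mathcal M_q$.

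\emph{Radial formula.} Next we compute $P_{\sigma,\frac nr}f_c(tc)$. By \eqref{42},
\[
P_{\sigma,\frac nr}(\ell_c^q\overline{\ell_c})(tc)=(1-t^2)^\sigma\int_S(1-t\overline{\ell_c})^{-n/r}(1-t\ell_c)^{-\sigma}\ell_c^q\overline{\ell_c}\,du.
\]
We expand both factors binomially and integrate term by term with \eqref{389}. Only the pairs with $\ell_c$-degree $m'+q$ equal to $\overline{\ell_c}$-degree $m+1$ survive, so
\[
P_{\sigma,\frac nr}(\ell_c^q\overline{\ell_c})(tc)=(1-t^2)^\sigma\sum_{m'\ge0}\frac{(\frac nr)_{m'}(\sigma)_{m'+q-1}(m'+q)!}{m'!\,(m'+q-1)!\,(\frac nr)_{m'+q}}\,t^{2m'+q-1}.
\]
From this we subtract $\frac q\sigma$ times Lemma~\ref{43} with exponent $q-1$ and $\theta=\frac nr$, which is $(1-t^2)^\sigma t^{q-1}{}_2F_1(\sigma,\frac nr+q-1;\frac nr+q-1;t^2)=t^{q-1}$.

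We expect the $t^{q-1}$ terms to cancel. This uses $(\frac nr)_{q}=(\frac nr)_{q-1}\,\sigma$, which holds since $\frac nr+q-1=\sigma$. After the cancellation we simplify the Pochhammer ratios, again using $\frac nr+q-1=\sigma$. We then apply Euler's transformation to remove the prefactor $(1-t^2)^\sigma$ and identify the result with $\frac{\frac nr-1}{\sigma+1}\,t^{q+1}{}_2F_1(1,1;\sigma+2;t^2)$. As a cross-check, we compare Taylor coefficients in $t^2$ directly. This bookkeeping, and in particular matching the exact constant $\frac{\frac nr-1}{\sigma+1}$, is the main computational obstacle.

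\emph{Singularity.} Put $\rho=1-t^2$ and apply Lemma~\ref{551} with $\lambda=\sigma+1$ and $\mu=\sigma$. Here $\lambda-\mu=1$ and $\sigma\ge\frac nr>1$, since $r>1$ and $a>0$. Thus ${}_2F_1(1,1;\sigma+2;1-\rho)$ equals an analytic function plus a nonzero multiple of $\rho^\sigma(1-\rho)^{-\sigma-1}$ when $\sigma\notin\mathbb N$, and of $\rho^\sigma\log\rho\,(1-\rho)^{-\sigma-1}$ when $\sigma\in\mathbb N$. The prefactor $\frac{\frac nr-1}{\sigma+1}t^{q+1}$ is nonzero near $t=1$ because $\frac nr>1$. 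Hence, exactly as at the end of the proof of Theorem~\ref{57}, the radial function is not $C^{\lfloor\sigma\rfloor,\varepsilon}$ for any $\varepsilon>\sigma-\lfloor\sigma\rfloor$ in the non-integer case, and not $C^\sigma$ in the integer case. Therefore $P_{\sigma,\frac nr}f_c\notin\mathcal R^\sigma(\overline\Omega)$.
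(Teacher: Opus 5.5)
\textbf{The gap is in the radial formula.} Your membership check is the paper's, and your final singularity argument via Lemma~\ref{551} with $\lambda=\sigma+1$, $\mu=\sigma$ is also the paper's. The gap is the radial formula \eqref{66}, which is the real content of the lemma. You only announce it (``we expect'', ``the main computational obstacle''), and the series you display is wrong. The $\overline{\ell_c}$-exponent $m'+q-1$ comes from $h(tc,\omega)^{-n/r}$, so it carries $(\frac nr)_{m'+q-1}$; the $\ell_c$-exponent $m'$ carries $(\sigma)_{m'}$. You swapped them. The correct coefficient is
\[
\frac{(\frac nr)_{m'+q-1}(\sigma)_{m'}(m'+q)}{m'!\,(\frac nr)_{m'+q}}=\frac{(\sigma)_{m'}(m'+q)}{m'!\,(\sigma+m')}.
\]
With your version, the $m'=0$ coefficient is $q(\sigma)_{q-1}/(\frac nr)_q$. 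This differs from $q/\sigma$ once $q\ge2$, because $(\sigma)_{q-1}>(\frac nr)_{q-1}$. So the cancellation of the $t^{q-1}$ terms you rely on would fail.

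\textbf{How to finish from the correct series.} Put $x=t^2$ and split $m'+q=(\sigma+m')-(\frac nr-1)$. The first part sums to $(1-x)^{-\sigma}$ and cancels the prefactor. Together with $-\frac q\sigma t^{q-1}$, it leaves $\frac{n/r-1}{\sigma}t^{q-1}$, since $\sigma-q=\frac nr-1$. The second part is $\frac1\sigma{}_2F_1(\sigma,\sigma;\sigma+1;x)$. Euler's transformation gives $(1-x)^\sigma{}_2F_1(\sigma,\sigma;\sigma+1;x)=(1-x)\,{}_2F_1(1,1;\sigma+1;x)$. Hence
\[
P_{\sigma,\frac nr}f_c(tc)=\frac{\frac nr-1}{\sigma}\,t^{q-1}\left[1-(1-x)\,{}_2F_1(1,1;\sigma+1;x)\right].
\]
Comparing Gauss coefficients then gives $1-(1-x)\,{}_2F_1(1,1;\sigma+1;x)=\frac{\sigma x}{\sigma+1}\,{}_2F_1(1,1;\sigma+2;x)$, which is \eqref{66}.

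\textbf{The paper's shorter route.} The paper reaches the same intermediate expression without any double series. It writes $t\overline{\ell_c}=1-(1-t\overline{\ell_c})$ in the kernel, which gives
\[
P_{\sigma,\frac nr}(\ell_c^q\overline{\ell_c})(tc)=t^{-1}\left(P_{\sigma,\frac nr}(\ell_c^q)(tc)-P_{\sigma,\frac nr-1}(\ell_c^q)(tc)\right).
\]
All three needed values then come directly from \eqref{47}, where $\sigma=\frac nr+q-1$ collapses the first ${}_2F_1$ to $1$.
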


\begin{proof}
Since $\ell_c^q\in\mathcal P_{(q)}$, summing the corresponding
generators in \eqref{59} with coefficients $c_j$ gives
$f_c\in\mathcal M_q$; here we use $\|c\|=1$ and
$\sigma=\frac nr+q-1$.

Put $x=t^2$ and first suppose $0<t<1$. Using
$t\overline{\ell_c}=1-(1-t\overline{\ell_c})$ in the
Poisson kernel \eqref{8}, together with \eqref{42}, gives
\[
P_{\sigma,\frac nr}(\ell_c^q\overline{\ell_c})(tc)
=
\frac{1}{t}\left(
P_{\sigma,\frac nr}(\ell_c^q)(tc)
-
P_{\sigma,\frac nr-1}(\ell_c^q)(tc)
\right).
\]
Since $\sigma=\frac nr+q-1$, Lemma~\ref{43}, in the form
\eqref{47}, gives
\[
\begin{aligned}
&P_{\sigma,\frac nr}(\ell_c^q)(tc)=t^q,\ \ 
P_{\sigma,\frac nr}(\ell_c^{q-1})(tc)=t^{q-1},\\
&P_{\sigma,\frac nr-1}(\ell_c^q)(tc)
=
\frac{\frac nr-1}{\sigma}\,
t^q(1-x)\,{}_2F_1(1,1;\sigma+1;x).
\end{aligned}
\]
Therefore, using $\sigma-q=\frac nr-1$,
\[
P_{\sigma,\frac nr}f_c(tc)
=
\frac{\frac nr-1}{\sigma}\,t^{q-1}
\left[1-(1-x)\,{}_2F_1(1,1;\sigma+1;x)\right].
\]
Comparing coefficients in the Gauss series gives
\[
1-(1-x)\,{}_2F_1(1,1;\sigma+1;x)
=
\frac{\sigma x}{\sigma+1}\,
{}_2F_1(1,1;\sigma+2;x).
\]
This proves \eqref{66} for $t>0$, and continuity gives it at $t=0$.

Since $r>1$, we have
\(\frac nr=1+\frac a2(r-1)>1\).
Put $\rho=1-t^2$. Lemma~\ref{551}, with
$\lambda=\sigma+1$ and $\mu=\sigma$, gives
\[
P_{\sigma,\frac nr}f_c(tc)
=
H(\rho)
+
\left(\frac nr-1\right)
(1-\rho)^{(q+1)/2-\sigma-1}
\begin{cases}
\Gamma(\sigma+1)\Gamma(-\sigma)\rho^\sigma,
&\sigma\notin\mathbb N,\\[2mm]
(-1)^{\sigma+1}\rho^\sigma\log\rho,
&\sigma\in\mathbb N,
\end{cases}
\]
where $H$ is analytic near $\rho=0$. The coefficient
of the singular term is nonzero in either case.
Therefore $P_{\sigma,\frac nr}f_c\notin
\mathcal R^\sigma(\overline\Omega)$ and the proof is complete.
\end{proof}

\begin{theorem}
\label{67} Let $f\in L^2(S)$ and put \(F=P_{\sigma,\frac nr}f\).
If $F\in\mathcal{R}^{\sigma}(\overline\Omega), $ then
\begin{equation}
\label{68}\bar\partial\mathcal{D}_{\sigma-\frac nr+1}F=0.
\end{equation}

\end{theorem}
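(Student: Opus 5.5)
The plan follows the template of Theorem~\ref{57}, with Lemma~\ref{61} as the bridge between $\bar\partial\mathcal D_qF(0)$ and the boundary data. Put $q=\sigma-\frac nr+1$.

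\textbf{Step 1: vanishing at the origin.} We first show $\bar\partial\mathcal D_qF(0)=0$ under the hypothesis $F\in\mathcal R^\sigma(\overline\Omega)$. By Lemma~\ref{61}, it suffices to show $f\perp\mathcal M_q$. Let $f_M$ be the orthogonal projection of $f$ onto $\mathcal M_q$. By Lemma~\ref{64}, $\mathcal M_q$ is irreducible and is the full $K$-isotypic component of its type in $L^2(S)$. Hence $f_M=\Pi_\tau^S f$ for the corresponding type $\tau$, and Lemma~\ref{29} gives $P_{\sigma,\frac nr}f_M\in\mathcal R^\sigma(\overline\Omega)$.

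Now suppose $f_M\ne0$. The set $\{v\in\mathcal M_q: P_{\sigma,\frac nr}v\in\mathcal R^\sigma(\overline\Omega)\}$ is a $K$-invariant subspace, because $P_{\sigma,\frac nr}$ intertwines $K$ and $K$ acts linearly, preserving the regularity classes. It contains $f_M$, so by irreducibility it is all of $\mathcal M_q$. In particular it contains the vector $f_c$ of Lemma~\ref{65}. But Lemma~\ref{65} says $P_{\sigma,\frac nr}f_c\notin\mathcal R^\sigma(\overline\Omega)$, a contradiction. Hence $f_M=0$, and Lemma~\ref{61} yields $\bar\partial\mathcal D_qF(0)=0$.

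\textbf{Step 2: propagation to all of $\Omega$.} For $\widetilde g\in\widetilde G$, set $F_{\widetilde g}=P_{\sigma,\frac nr}\pi_{\sigma,\frac nr}(\widetilde g)f$. By Lemma~\ref{29}, $F_{\widetilde g}\in\mathcal R^\sigma(\overline\Omega)$. Applying Step 1 to the data $\pi_{\sigma,\frac nr}(\widetilde g)f$ gives $\bigl(\bar\partial\mathcal D_qF_{\widetilde g}\bigr)_0=0$ for every $\widetilde g$. Since $L=q-1=\sigma-\frac nr$, Lemma~\ref{35} applies directly and gives $\bar\partial\mathcal D_qF=0$ on $\Omega$, which is \eqref{68}.

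\textbf{Main obstacle.} Almost all of the substance is already packaged in the earlier results: the multiplicity-one and irreducibility statement of Lemma~\ref{64}, and the singular radial test vector of Lemma~\ref{65}. The step needing care is the claim that $f_M$ is exactly a $K$-type projection, so that Lemma~\ref{29} applies. This holds because Lemma~\ref{64} identifies $\mathcal M_q$ with the full isotypic component, and it is exactly why the tube-type hypothesis is needed here.
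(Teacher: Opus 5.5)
Your proposal is correct and follows the paper's proof essentially step for step. You use the irreducibility and full-isotypic property of $\mathcal M_q$ (Lemma~\ref{64}) together with the singular test vector $f_c$ (Lemma~\ref{65}) to obtain $f\perp\mathcal M_q$, then Lemma~\ref{61} at the origin and Lemmas~\ref{29} and~\ref{35} to propagate the equation to all of $\Omega$.
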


\begin{proof}
By Lemma~\ref{64}, $\mathcal M_q$ is
irreducible and is the full $K$-isotypic component of its type.
Let $f_{\mathcal M}$ be the orthogonal projection of $f$
onto $\mathcal M_q$. By Lemma~\ref{29},
$P_{\sigma,\frac nr}f_{\mathcal M}
\in\mathcal R^\sigma(\overline\Omega)$.
As in the proof of Theorem~\ref{57}, irreducibility of
$\mathcal M_q$ implies that, if $f_{\mathcal M}\ne0$, then
\[
P_{\sigma,\frac nr}\mathcal M_q
\subset\mathcal R^\sigma(\overline\Omega).
\]
This contradicts Lemma~\ref{65}, since $f_c\in\mathcal M_q$.
Hence $f_{\mathcal M}=0$, or equivalently
$f\perp\mathcal M_q$.

By Lemma~\ref{61}, the orthogonality
$f\perp\mathcal M_q$ gives
$
\bar\partial\mathcal D_qF(0)=0.
$

Now let $\widetilde g\in\widetilde G$ and put
$
F_{\widetilde g}
=
P_{\sigma,\frac nr}
\pi_{\sigma,\frac nr}(\widetilde g)f.
$
By Lemma~\ref{29},
$
F_{\widetilde g}\in\mathcal R^\sigma(\overline\Omega).
$
Applying the preceding argument to
$\pi_{\sigma,\frac nr}(\widetilde g)f$ gives
$
\bar\partial\mathcal D_qF_{\widetilde g}(0)=0.
$
Since this holds for every $\widetilde g\in\widetilde G$ and
$
q=\sigma-\frac nr+1,
$
Lemma~\ref{35} gives \(\bar\partial\mathcal D_qF=0 \ \text{on }\Omega\).
This is \eqref{68}.
\end{proof}

\section{The non-tube type case}

\label{69}

Throughout this section, assume that $\Omega$ is not of tube type,
so $b>0$, and fix $\sigma$ with
$\sigma-\frac nr\in\mathbb N_0$.
Fix an arbitrary tripotent $c$ of rank $r-1$, taking $c=0$ if
$r=1$, and set $\Omega_0(c)=\Omega\cap V_0(c)$. The space $V_0(c)$
has rank one and complex dimension $b+1$
\cite[Section~2, formulas~(4)--(6)]{KZ}, so $\Omega_0(c)$ is its
unit ball.

We first record the normalization of the kernel. Put
\[
\mathcal{K}_{\sigma}(z,\omega) = h(z,z)^{\sigma}h(z,\omega)^{-\frac nr}
h(\omega,z)^{-\sigma}.
\]

For fixed $z\in\Omega$, the function
$h(\cdot,z)^{-\sigma}$ is holomorphic in a neighborhood of
$\overline\Omega$, and hence its restriction to $S$ belongs to $H^2(S)$.
The reproducing formula \eqref{151} therefore gives
\[
\int_S
h(z,\omega)^{-\frac nr}h(\omega,z)^{-\sigma}\,du(\omega)
=
h(z,z)^{-\sigma}.
\]
Thus
\begin{equation}\label{72}
\int_S\mathcal K_\sigma(z,\omega)\,du(\omega)=1.
\end{equation}

\begin{lemma}
\label{73}
For every $f\in C(S)$,
\[
\lim_{t\uparrow1}
\sup_{\omega_0\in S}
\left|
P_{\sigma,\frac nr}f(t\omega_0)-f(\omega_0)
\right|=0.
\]
For every $f\in L^2(S)$, \(P_{\sigma,\frac nr}f(t\,\cdot)\longrightarrow f \quad\text{in }L^2(S)\)
as $t\uparrow1$. In particular, if $P_{\sigma,\frac nr}f$ extends
continuously to $\overline\Omega$, then $f$ has a continuous representative
and \(f=\left.P_{\sigma,\frac nr}f\right|_S\).
\end{lemma}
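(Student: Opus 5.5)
The plan is to prove that $P_{\sigma,\frac nr}$ is an approximate identity at the boundary, using the normalization \eqref{72} together with bounds on $\mathcal K_\sigma$. There are three steps, in this order.

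\textbf{Step 1 (kernel concentration).} Write $z=t\omega_0$. Each $\omega_0\in S$ is a maximal tripotent, $K$ acts transitively on $S$, and $\mathcal K_\sigma(kz,k\omega)=\mathcal K_\sigma(z,\omega)$. So it suffices to study $z=te$ with $e=e_0$ fixed, where
\[
h(te,te)=(1-t^2)^r .
\]
I then need two estimates.

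(a) A uniform bound
\[
\sup_{0<t<1}\int_S|\mathcal K_\sigma(te,\omega)|\,du(\omega)<\infty .
\]
(b) For every neighbourhood $U$ of $e$ in $S$,
\[
\int_{S\setminus U}|\mathcal K_\sigma(te,\omega)|\,du(\omega)\to0 .
\]
For (b), note that $h(te,\omega)$ is bounded away from $0$ for $\omega\notin U$, uniformly in $t$ near $1$, because $h(e,\omega)\neq0$ for $\omega\neq e$ on $S$. The prefactor $(1-t^2)^{r\sigma}$ then tends to $0$, since $\operatorname{Re}\sigma\ge n/r>0$.

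For (a) the modulus is $(1-t^2)^{r\operatorname{Re}\sigma}|h(te,\omega)|^{-n/r-\operatorname{Re}\sigma}$ up to a bounded phase factor. The argument of $h$ stays in a half-plane because $\operatorname{Re}h(te,\omega)>0$ on $S$. With this bound, (a) is exactly the Forelli--Rudin/Faraut--Korányi integral
\[
\int_S|h(z,\omega)|^{-n/r-\lambda}\,du(\omega)\asymp h(z,z)^{-\lambda}
\]
on the Shilov boundary, valid for $\lambda=\operatorname{Re}\sigma>0$ (see \cite[Section~2]{EYZ}).

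\textbf{Step 2 (uniform convergence on $C(S)$).} By \eqref{72},
\[
P_{\sigma,\frac nr}f(te)-f(e)=\int_S\mathcal K_\sigma(te,\omega)\bigl(f(\omega)-f(e)\bigr)\,du(\omega).
\]
Split $S=U\cup(S\setminus U)$ and use uniform continuity of $f$ together with (a) and (b). Uniformity in $\omega_0$ follows by $K$-equivariance: $P_{\sigma,\frac nr}f(tk e)=P_{\sigma,\frac nr}(f\circ k)(te)$, and the family $\{f\circ k\}$ has a common modulus of continuity.

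\textbf{Step 3 ($L^2$ convergence and the continuous case).} The operators $T_tf=P_{\sigma,\frac nr}f(t\,\cdot)$ are bounded on $L^2(S)$ uniformly in $t$. This follows by Schur's test: (a) gives the row bound, and the column bound $\sup_\omega\int_S|\mathcal K_\sigma(tk e,\omega)|\,dk$ equals the same quantity by $K$-invariance of both measures and of the kernel. Since $C(S)$ is dense in $L^2(S)$ and Step 2 gives convergence on it, an $\varepsilon/3$ argument yields $T_tf\to f$ in $L^2(S)$.

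Finally, suppose $F=P_{\sigma,\frac nr}f$ extends continuously to $\overline\Omega$. Then $F(t\,\cdot)\to F|_S$ uniformly on $S$ by uniform continuity on the compact set $\overline\Omega$, hence also in $L^2(S)$. Uniqueness of $L^2$ limits gives $f=F|_S$ almost everywhere, which provides a continuous representative.

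\textbf{Main obstacle.} The main obstacle is estimate (a), because the kernel is complex-valued and the exponent $-\sigma$ on $h(\omega,z)$ is not matched by a real power. I would first reduce via $|h(z,\omega)^{-u}|\le e^{\pi|\operatorname{Im}u|/2}|h(z,\omega)|^{-\operatorname{Re}u}$, which uses $\operatorname{Re}h(z,\omega)>0$ for $z\in\Omega$, $\omega\in\overline\Omega$. Then I would quote the known asymptotic $\int_S|h(z,\omega)|^{-2\alpha}\,du\asymp h(z,z)^{n/r-2\alpha}$ for $2\alpha>n/r$. If that asymptotic is not available in the needed form, I would derive it from \eqref{13} and \eqref{15}: writing $|h|^{-2\alpha}=h^{-\alpha}\overline{h^{-\alpha}}$ and integrating gives $\sum_\alpha(\alpha)_\alpha^2(n/r)_\alpha^{-1}K^F_\alpha(z,z)$, which is compared with the expansion of $h(z,z)^{-(2\alpha-n/r)}$ by Gamma-ratio asymptotics of the coefficients.
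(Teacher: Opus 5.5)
Your overall structure is sound: the normalization \eqref{72}, a uniform $L^1$ bound, a tail estimate, $K$-equivariance for uniformity, Schur's test for the $L^2$ bound, then density. Step~3 is essentially what the paper does.

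The gap is Step~1(b). Your assertion that $h(e,\omega)\neq0$ for $\omega\in S\setminus\{e\}$ is false as soon as $r\ge2$. Take $e=e_1+\cdots+e_r$ for a Jordan frame and $\omega=e-2e_r\in S$. Then $h(te,\omega)=(1-t)^{r-1}(1+t)\to0$, although $\|\omega-e\|=2$. At this point
\[
\mathcal K_{\frac nr}(te,\omega)=(1-t)^{n(2-r)/r}(1+t)^{n(r-2)/r},
\]
which is identically $1$ when $r=2$ and blows up when $r\ge3$. So for $\sigma=\frac nr$ (the Poisson--Szeg\H{o} case behind Corollary~\ref{4}) the kernel does not decay pointwise away from $e$. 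No argument based on $h(te,\cdot)$ being bounded below off a neighbourhood of $e$ can work. Estimate (b) is still true, but only in an integrated sense. It is precisely the higher-rank content of the lemma, your proposal gives no proof of it, and Steps~2 and~3 both depend on it.

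The paper handles this in two moves.

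\textbf{Domination.} Since $\sigma-\frac nr\in\mathbb N_0$, $h(t\omega_0,t\omega_0)=(1-t^2)^r$, and $|h(t\omega_0,\omega)|\ge(1-t)^r$, one gets
\[
|\mathcal K_\sigma(t\omega_0,\omega)|\le 2^{r(\sigma-\frac nr)}\mathcal K_{\frac nr}(t\omega_0,\omega).
\]
The lower bound on $|h|$ holds because $\zeta\mapsto h(\zeta\omega_0,\omega)$ is a polynomial of degree at most $r$, equal to $1$ at $0$, with no zeros in the unit disk. Together with positivity of $\mathcal K_{\frac nr}$ and \eqref{72}, this gives your (a) at once. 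Two side remarks on your version of (a):
\begin{itemize}
\item The Forelli--Rudin asymptotic you quote is not valid for all $\lambda>0$ in rank $\ge2$; in general it requires $\lambda>\frac a2(r-1)$. Here $\sigma\ge\frac nr$ happens to satisfy this.
\item $\operatorname{Re}h(z,\omega)>0$ also fails in rank $\ge2$. This is harmless, because $\sigma$ is real here.
\end{itemize}

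\textbf{Concentration.} The paper imports this from the uniform boundary limit of the positive Poisson--Szeg\H{o} transform \cite[Theorem~4.3]{BK07}. It applies that result to $\omega\mapsto\|\omega-e_0\|$, then extends to Lipschitz and continuous $f$.

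If you want a self-contained route to (b), you can use the following. $P_{\frac nr,\frac nr}$ reproduces holomorphic and antiholomorphic polynomials, by the same Szeg\H{o} argument that gives \eqref{72}. On $S$ one has $\|\omega-e\|^2=2r-2\operatorname{Re}\ell_e(\omega)$. Hence
\[
\int_S\mathcal K_{\frac nr}(te,\omega)\|\omega-e\|^2\,du(\omega)=2r(1-t).
\]
Chebyshev's inequality and the domination above then yield (b).
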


\begin{proof}
Write \(T_tf(\omega_0)=P_{\sigma,\frac nr}f(t\omega_0)\).
We use the uniform boundary limit for the Poisson--Szeg\H{o}
transform \(P_{\frac nr,\frac nr}\) on $C(S)$, given by
\cite[Theorem~4.3]{BK07} at $s=n/r$, with the normalization
in \eqref{72}.

We first compare the kernels.  For $\omega_0,\omega\in S$, put
$p(\zeta)=h(\zeta\omega_0,\omega)$.  By \eqref{13} with $u=-1$,
$p$ is a polynomial of degree at most $r$, and $p(0)=1$.
The Szeg\H{o}-kernel formula \cite[(3.18)]{Koranyi} gives
$p(\zeta)\ne0$ for $|\zeta|<1$.  Thus
$p(\zeta)=\prod_{j=1}^{d}(1-\lambda_j\zeta)$, where $d\le r$
and $|\lambda_j|\le1$.  For $0\le t<1$ this gives
\[
|h(t\omega_0,\omega)|\ge(1-t)^r,
\qquad h(t\omega_0,t\omega_0)=(1-t^2)^r,
\]
the second identity following from \eqref{9}.  Since
$\sigma-\frac nr\in\mathbb N_0$, we obtain
\begin{equation}\label{501}
|\mathcal K_\sigma(t\omega_0,\omega)|
=
\left(\frac{h(t\omega_0,t\omega_0)}{|h(t\omega_0,\omega)|}\right)^{\sigma-\frac nr}
\mathcal K_{\frac nr}(t\omega_0,\omega)
\le C\,\mathcal K_{\frac nr}(t\omega_0,\omega),
\qquad C=2^{r(\sigma-\frac nr)}.
\end{equation}

The positivity of $\mathcal K_{\frac nr}$, \eqref{72}, and
\eqref{501} give
\[
\|T_tf\|_\infty\le C\|f\|_\infty,
\qquad f\in C(S),\quad 0\le t<1.
\]
To prove convergence on a dense subspace of $C(S)$, fix $e_0\in S$
and put \(d_{e_0}(\omega)=\|\omega-e_0\|\).  By the cited
Poisson--Szeg\H{o} boundary limit,
\[
P_{\frac nr,\frac nr}d_{e_0}(te_0)
\longrightarrow d_{e_0}(e_0)=0.
\]
The $K$-invariance of the base kernel and $du$, and the transitive
unitary action of $K$ on $S$, show that
\[
\int_S\mathcal K_{\frac nr}(t\omega_0,\omega)
\|\omega-\omega_0\|\,du(\omega)
=P_{\frac nr,\frac nr}d_{e_0}(te_0)
\]
for every $\omega_0\in S$.  If $f$ is Lipschitz on $S$ with
constant $L_f$, then
\eqref{72} and \eqref{501} give
\[
\begin{aligned}
|T_tf(\omega_0)-f(\omega_0)|
&\le C\int_S\mathcal K_{\frac nr}(t\omega_0,\omega)
|f(\omega)-f(\omega_0)|\,du(\omega)\\
&\le CL_f
P_{\frac nr,\frac nr}d_{e_0}(te_0).
\end{aligned}
\]
Thus $T_tf\to f$ uniformly for Lipschitz $f$.  These functions are
dense in $C(S)$; for $f\in C(S)$ and Lipschitz $g$,
\[
\|T_tf-f\|_\infty
\le(C+1)\|f-g\|_\infty+\|T_tg-g\|_\infty.
\]
The uniform bound on $T_t$ now gives $T_tf\to f$ uniformly for
every $f\in C(S)$.

By \eqref{72}, for every \(\omega_0\in S\),
\[
\int_S\mathcal K_{\frac nr}(t\omega_0,\omega)\,du(\omega)=1.
\]
For fixed \(t\), $K$-invariance makes
\[
A_t(\omega):=
\int_S\mathcal K_{\frac nr}(t\omega_0,\omega)\,du(\omega_0)
\]
constant on \(S\). Since \(du(S)=1\), Fubini's theorem and the
preceding identity give
\[
A_t(\omega)
=\int_S A_t(\eta)\,du(\eta)
=\int_S\int_S
  \mathcal K_{\frac nr}(t\omega_0,\eta)
  \,du(\eta)\,du(\omega_0)
=1.
\]
By \eqref{501} and Cauchy--Schwarz with respect to the probability
measure \(\mathcal K_{\frac nr}(t\omega_0,\omega)\,du(\omega)\),
\[
|T_tf(\omega_0)|^2
\le C^2\int_S\mathcal K_{\frac nr}(t\omega_0,\omega)
|f(\omega)|^2\,du(\omega).
\]
Integrating in \(\omega_0\) and using \(A_t(\omega)=1\) gives
\[
\|T_tf\|_{L^2(S)}\le C\|f\|_{L^2(S)},
\qquad 0\le t<1.
\]
For \(f\in L^2(S)\) and \(g\in C(S)\), it follows that
\[
\|T_tf-f\|_{L^2(S)}
\le (C+1)\|f-g\|_{L^2(S)}+\|T_tg-g\|_{L^2(S)}.
\]
The last term tends to zero by the uniform convergence proved above.
Since \(C(S)\) is dense in \(L^2(S)\), we obtain \(T_tf\to f\) in
\(L^2(S)\).

Finally, if \(F=P_{\sigma,\frac nr}f\) is continuous on
\(\overline\Omega\), then \(F(t\,\cdot)\to F|_S\) uniformly, while
the result just proved gives \(F(t\,\cdot)\to f\) in \(L^2(S)\).
Hence \(f=F|_S\) almost everywhere.
\end{proof}

By Lemma~\ref{40}, every $F=P_{\sigma,\frac nr}f$ with $f\in L^2(S)$
satisfies
\begin{equation}
\label{74}
\Delta_{\Omega}F
+\left(\sigma-\frac nr\right)
\sum_j\overline{(z-Q(z)\bar z)_j}\,\bar\partial_jF=0.
\end{equation}

For $u\in\partial\Omega_0(c)$, the orthogonal sum $c+u$ is a
tripotent of rank $r$, hence maximal in $V$. Thus
\(c+\partial\Omega_0(c)\subset S\)
\cite[Theorem~4.2]{Clerc}.
More generally, the spectral decomposition of $c+w$ gives
\(c+\overline{\Omega_0(c)}\subset\overline\Omega\).

Let $B_0$ be the Bergman operator of the Jordan triple subsystem
$V_0(c)$. Thus
$$B_0(w,\bar v)=B(w,\bar v)|_{V_0(c)}\ \  \mbox{for}\ w,v\in V_0(c).$$
In the inherited Hermitian inner product, the rank-one Jordan triple
product is
\[
\{w,\bar v,x\}
=\langle w,v\rangle x+\langle x,v\rangle w.
\]
Since \(Q(w)\bar v=\frac12\{w,\bar v,w\}\), we obtain
\begin{equation}\label{75}
Q(w)\bar v=\langle w,v\rangle w,
\qquad
B_0(w,\bar w)
=(1-\|w\|^2)(I-ww^*),
\end{equation}
where \(ww^*x=\langle x,w\rangle w\). In particular,
\(w-Q(w)\bar w=(1-\|w\|^2)w\).

\begin{lemma}
\label{76}
For every $w\in\Omega_0(c)$,
\begin{equation}
\label{77}
\begin{aligned}
B(c+w,\overline{c+w})|_{V_0(c)}
=B_0(w,\bar w),\ \ \ 
B(c+w,\overline{c+w})|_{V_2(c)\oplus V_1(c)}
=0.
\end{aligned}
\end{equation}
Also,
\begin{equation}
\label{78}
(c+w)-Q(c+w)\overline{(c+w)}
=
w-Q(w)\bar w.
\end{equation}
\end{lemma}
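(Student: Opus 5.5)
The plan is to compute directly from the definition $B(z,\bar z)=I-D(z,\bar z)+Q(z)Q(\bar z)$ at $z=c+w$, where $c$ is a tripotent of rank $r-1$, $w\in V_0(c)$ and $\|w\|<1$. I will use the Peirce rules for the decomposition $V=V_2(c)\oplus V_1(c)\oplus V_0(c)$. The relevant facts are $\{V_i,\overline{V_j},V_k\}\subset V_{i-j+k}$ (zero if the index lies outside $\{0,1,2\}$) and $D(V_2(c),\overline{V_0(c)})=D(V_0(c),\overline{V_2(c)})=0$. Orthogonality of $c$ and $w$ then gives $D(c,\bar w)=D(w,\bar c)=0$, so
\[
D(c+w,\overline{c+w})=D(c,\bar c)+D(w,\bar w).
\]
It also gives $Q(c+w)=Q(c)+Q(w)$, because the cross term $\{c,\cdot,w\}$ acting on a conjugate vector is $D(c,\bar x)w$, which respects the Peirce grading.

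Next I split according to the target space. On $V_0(c)$ we have $D(c,\bar c)=0$, and $Q(c)\bar x$ for $x\in V_0(c)$ lands in $V_{2-0+2}=0$. Also $Q(\bar c+\bar w)$ applied to $x\in V_0(c)$ yields a conjugate vector in $\overline{V_0(c)}$ (only the $w$ part survives). Then $Q(w)$ applied to it gives the $V_0(c)$ subsystem quantity. Hence $B(c+w,\overline{c+w})|_{V_0(c)}=I-D(w,\bar w)+Q(w)Q(\bar w)=B_0(w,\bar w)$.

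On $V_2(c)\oplus V_1(c)$ I would argue more structurally instead of componentwise. The Bergman operator factors multiplicatively along orthogonal tripotents: for orthogonal $z_1,z_2$ one has $B(z_1+z_2,\overline{z_1+z_2})=B(z_1,\bar z_1)B(z_2,\bar z_2)$. Moreover, for a tripotent $c$, $B(c,\bar c)$ is the Peirce projection onto $V_0(c)$, since $B(c,\bar c)=P_0(c)$ from $D=2P_2+P_1$ and $Q(c)Q(\bar c)=P_2$, giving $I-2P_2-P_1+P_2=P_0$. The claimed factorization can be checked via the spectral/Peirce calculus, or by reducing to a joint Peirce decomposition with respect to a frame $e_1,\dots,e_{r-1},e_r$, where $c=\sum_{i<r}e_i$ and $w=te_r'$ after a $K$-rotation inside $V_0(c)$. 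On the joint Peirce spaces $V_{ij}$, $B(\sum t_ke_k)$ acts by the scalar $(1-|t_i|^2)(1-|t_j|^2)$, where $t_0=0$. Every $V_{ij}$ in $V_2(c)\oplus V_1(c)$ has some index $i\le r-1$ with $t_i=1$, so the scalar is $0$. This is the step I expect to be the main obstacle: justifying the scalar action of $B$ on $V_{ij}$ and the reduction of $w$ to a multiple of a minimal tripotent. The reduction uses the fact that $V_0(c)$ has rank one, so $w=\|w\|e$ for a minimal tripotent $e\in V_0(c)$ orthogonal to $c$. The scalar action follows from $D(e_k,\bar e_k)$ acting by $\delta_{ik}+\delta_{jk}$ and $Q(e_k)$ mapping $\overline{V_{ij}}$ into $V_{ij}$ appropriately, as in \cite{EYZ}.

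For \eqref{78}, I expand $Q(c+w)\overline{(c+w)}=Q(c)\bar c+Q(w)\bar w+\text{cross terms}$. Here $Q(c)\bar c=\tfrac12\{c,\bar c,c\}=c$, and the cross terms $Q(c)\bar w$, $Q(w)\bar c$ and $\{c,\bar c,w\}$-type mixed terms vanish. Indeed, $Q(c)\bar w\in V_{2-0+2}=0$, $Q(w)\bar c\in V_{0-2+0}=0$, and the mixed term $\{c,\bar w,w\}$ and its analogues vanish because $D(c,\bar w)=0$. Therefore $(c+w)-Q(c+w)\overline{(c+w)}=w-Q(w)\bar w$, completing the lemma.
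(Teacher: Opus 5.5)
Your claim that orthogonality gives $Q(c+w)=Q(c)+Q(w)$ is false, and the justification you give does not establish it. The cross term is $\{c,\bar x,w\}=D(c,\bar x)w$. For $x\in V_1(c)$ it lies in $V_1(c)$ and need not vanish. In $V=M_{2,3}(\mathbb C)$ with $\{x,\bar y,z\}=xy^*z+zy^*x$, take $c=E_{11}$, $w=\rho E_{22}\in V_0(c)$ and $x=E_{12}\in V_1(c)$; then $\{c,\bar x,w\}=\rho E_{21}\neq0$.

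Fortunately nothing you conclude depends on this identity. In the $V_0(c)$ computation you only need $Q(c+w)\bar y=Q(w)\bar y$ for $y\in V_0(c)$. That is true, because $Q(c)\bar y\in V_{2-0+2}(c)=0$ and $D(c,\bar y)w=0$ by $D(V_2(c),\overline{V_0(c)})=0$. In \eqref{78} you expand the mixed terms $\{c,\bar c,w\}$ and $\{c,\bar w,w\}$ explicitly anyway. So you should replace the global claim by this restricted one.

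Be aware that the cross term is exactly what makes $B$ vanish on $V_1(c)$. The false identity would give $B(c+w,\overline{c+w})|_{V_1(c)}=-D(w,\bar w)|_{V_1(c)}$, which in the example sends $E_{12}$ to $-\rho^2E_{12}\neq0$. So your decision not to compute componentwise on $V_1(c)$ was necessary, not a matter of style.

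With that correction your argument is sound, and it is organized somewhat differently from the paper's. The paper fixes a frame with $c=c_1+\cdots+c_{r-1}$ and $w=\rho c_r$, and reads off all of \eqref{77} from \cite[formula~(23)]{EYZ}, extended to $t_i=1$ by polynomiality. On $V_0(c)=V_{rr}\oplus V_{0r}$ it compares the scalars $(1-\rho^2)^2$ and $1-\rho^2$ with the explicit form \eqref{75} of $B_0$. It then obtains \eqref{78} from $Q(c+\rho c_r)\overline{(c+\rho c_r)}=c+\rho^3c_r$.

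Your $V_0(c)$ part and your proof of \eqref{78} use only the Peirce rules of the single tripotent $c$. They are frame-free and give $B(c+w,\overline{c+w})|_{V_0(c)}=B(w,\bar w)|_{V_0(c)}$, which is $B_0(w,\bar w)$ by definition. They hold for any tripotent $c$ and any $w\in V_0(c)$, with no need for $\operatorname{rank}V_0(c)=1$ or \eqref{75}.

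On $V_2(c)\oplus V_1(c)$, your second route is precisely the paper's. Your first route, $B(c,\bar c)=P_0(c)$ combined with multiplicativity of $B$ on orthogonal elements, is conceptually cleaner. However, the paper never establishes that multiplicativity, and it is most easily checked with the same joint Peirce formula. So it does not bypass the step you flag as the main obstacle. If you use the frame formula at $t_i=1$, include the remark that both sides are polynomial in the spectral coefficients.
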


\begin{proof}
For $r=1$, we have $c=0$, and both identities are immediate.
Assume $r\ge2$. Write
\[
c=c_1+\cdots+c_{r-1},\qquad w=\rho c_r,\quad 0\le\rho<1,
\]
where $(c_1,\ldots,c_r)$ is a Jordan frame. When $w=0$, choose
$c_r$ arbitrarily in $V_0(c)$.

For the spaces $V_{ij}$ in \eqref{502},
\cite[formula~(23)]{EYZ} gives
\[
B(c+w,\overline{c+w})|_{V_{ij}}
=(1-|t_i|^2)(1-|t_j|^2)I|_{V_{ij}},
\quad
t_0=0,\quad t_1=\cdots=t_{r-1}=1,\quad t_r=\rho.
\]
The formula also holds when $t_i=1$, since both sides are polynomial
in the spectral coefficients.  By \eqref{503}, every summand of
$V_1(c)\oplus V_2(c)$ has an index among $1,\ldots,r-1$, so the
operator vanishes there.  Also by \eqref{503},
$V_0(c)=V_{rr}\oplus V_{0r}$, and the restrictions to these two
spaces are respectively $(1-\rho^2)^2I$ and $(1-\rho^2)I$.
These agree with the restrictions of $B_0(w,\bar w)$ in
\eqref{75}, proving \eqref{77}.

Since  \(c\) is
orthogonal to \(c_r\), and both are
tripotents, direct computations show that
\[
Q(c+w)\overline{(c+w)}
=Q(c+\rho c_r)\overline{(c+\rho c_r)}
=c+\rho^3c_r,
\qquad
Q(w)\bar w=\rho^3c_r.
\]
Subtracting gives \eqref{78} and the proof is complete.
\end{proof}

For $f\in L^2(S)$ with $F=P_{\sigma,\frac nr}f\in C^2(\overline\Omega)$,
set
\[
F_c(w)=F(c+w), \qquad w\in\overline{\Omega_0(c)}.
\]
We now derive the equation satisfied by $F_c$. Since
$F\in C^2(\overline\Omega)$ and the coefficients in \eqref{74}
are polynomial in $z$ and $\bar z$, that equation extends by
continuity to $c+w$ for every $w\in\Omega_0(c)$.

Choose orthonormal coordinates adapted to
$V=V_2(c)\oplus V_1(c)\oplus V_0(c)$, with the last $b+1$
coordinates on $V_0(c)$. By \eqref{77}, the Bergman operator
vanishes on $V_2(c)\oplus V_1(c)$ and restricts to
$B_0(w,\bar w)$ on $V_0(c)$. Equations \eqref{78} and
\eqref{75} give
\[
(c+w)-Q(c+w)\overline{(c+w)}
=(1-\|w\|^2)w.
\]
The derivatives of $F$ in the $V_0(c)$ directions at $c+w$
are the corresponding derivatives of $F_c$ at $w$.
Thus \eqref{74} at $c+w$ becomes
\[
\sum_{i,j=1}^{b+1}
\bigl(B_0(w,\bar w)\bigr)_{i\bar j}
\partial_i\bar\partial_jF_c(w)
+
\left(\sigma-\frac nr\right)(1-\|w\|^2)
\sum_{j=1}^{b+1}\bar w_j\bar\partial_jF_c(w)
=0.
\]
Substituting \eqref{75} and dividing by $1-\|w\|^2$ yields
\begin{equation}\label{80}
\sum_{i,j=1}^{b+1}
(\delta_{ij}-w_i\bar w_j)
\partial_i\bar\partial_jF_c(w)
+
\left(\sigma-\frac nr\right)
\sum_{j=1}^{b+1}\bar w_j\bar\partial_jF_c(w)
=0.
\end{equation}

We study the boundary values of $F_c$ on
$\partial\Omega_0(c)$. As noted above, $\Omega_0(c)$ is the unit ball of  $V_0(c)$. An orthonormal
basis identifies its boundary with $\partial\mathbb B^{b+1}$. For $p,q\in\mathbb N_0$, consider polynomials $Y(w,\bar w)$
that are homogeneous of degree $p$ in $w$ and degree $q$ in
$\bar w$, and satisfy
\[
\sum_{j=1}^{b+1}\partial_j\bar\partial_jY=0.
\]
Their restrictions to $\partial\mathbb B^{b+1}$ form
$H^{p,q}(\mathbb C^{b+1})$, the space of complex spherical
harmonics of bidegree $(p,q)$.
The spherical harmonic decomposition is orthogonal and gives
\cite{ABC}
\[
L^2(\partial\mathbb B^{b+1})
=
\widehat{\bigoplus}_{p,q\ge0}H^{p,q}(\mathbb C^{b+1}).
\]

We also write $H^{p,q}(V_0(c))$ for this space on
$\partial\Omega_0(c)$. Let $\Pi_{H^{p,q}(V_0(c))}$ denote the
orthogonal projection of $L^2(\partial\Omega_0(c))$ onto  $H^{p,q}(V_0(c))$.
When $c$ is fixed, we abbreviate this to $\Pi_{H^{p,q}}$.

\begin{lemma}
\label{81}
If $f\in C(S)$ and
$
F=P_{\sigma,\frac nr}f \in C^{b+1+\sigma-\frac nr}(\overline\Omega).
$
Then
\[
\Pi_{H^{\sigma-\frac nr+1,1}(V_0(c))}
\bigl(f(c+\cdot)\bigr)=0.
\]
\end{lemma}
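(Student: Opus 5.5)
The plan is to restrict everything to the rank-one ball $\Omega_0(c)$ and reduce the claim to a one-variable hypergeometric computation, exactly as in the radial tests of Section~\ref{39}. Put $k=\sigma-\frac nr\in\mathbb N_0$ and $N=b+1=\dim_{\mathbb C}V_0(c)$. Since $F\in C^{N+k}(\overline\Omega)$ with $N+k\ge2$, the restriction $F_c(w)=F(c+w)$ belongs to $C^{N+k}(\overline{\Omega_0(c)})$ and satisfies \eqref{80} on the open ball. By Lemma~\ref{73} and $c+\partial\Omega_0(c)\subset S$, its boundary values are $F_c|_{\partial\Omega_0(c)}=f(c+\cdot)$. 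Fix $p=k+1$ and $q=1$.

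\textbf{Step 1: isolate the $(p,q)$ component.} The unitary group $U(V_0(c))$ acts linearly on the closed ball. The operator in \eqref{80}, namely $\sum(\delta_{ij}-w_i\bar w_j)\partial_i\bar\partial_j+k\sum\bar w_j\bar\partial_j$, is $U(V_0(c))$-invariant. I therefore form the isotypic projection $G=\Pi^{U}_{p,q}F_c$ by integrating $F_c\circ u^{-1}$ against the character of $H^{p,q}$. As in Lemma~\ref{29}, this averaging preserves $C^{N+k}$ regularity up to the boundary. $G$ still solves \eqref{80}, and on each sphere $|w|=t$ it is the $H^{p,q}$-projection of $F_c(t\,\cdot)$. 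Writing $G(w)=\sum_\ell g_\ell(|w|^2)Y_\ell(w)$ for an orthonormal basis $Y_\ell$ of bihomogeneous harmonic polynomials of bidegree $(p,q)$, a direct computation with $x=|w|^2$ gives the radial equation
\[
x(1-x)g''+\bigl[(N+p+q)-(1+p+q-k)x\bigr]g'-q(p-k)g=0 .
\]
This uses $\sum\bar w_j\bar\partial_j(gY)=(xg'+qg)Y$, the action of the Euler operators, and $\sum\partial_j\bar\partial_j(gY)=(xg''+(N+p+q)g')Y$. The equation is hypergeometric with parameters $(q,\,p-k;\,N+p+q)$. Since $G\in C^2$ at $0$, the Frobenius solution behaving like $x^{1-N-p-q}$ is excluded. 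Hence $g_\ell(x)=a_\ell\,{}_2F_1(1,1;N+k+2;x)$ for $p=k+1$, $q=1$.

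\textbf{Step 2: boundary behaviour.} Here $c-a-b=N+k\in\mathbb N$. So ${}_2F_1(1,1;N+k+2;1)=\frac{N+k+1}{N+k}$ is finite and nonzero. By the logarithmic connection formula (\cite[formula~(15.8.10)]{DLMF}, used as in Lemma~\ref{551}), near $x=1$ the function equals an analytic function plus a nonzero constant times $(1-x)^{N+k}\log(1-x)$. The coefficient involves $1/(\Gamma(a)\Gamma(b))=1$, which is nonzero. This term is not $C^{N+k}$ at $x=1$. If some $a_\ell\ne0$, choose a ray $w=t\zeta$ with $Y_\ell(\zeta)\neq0$ and pair $G$ with $Y_\ell$ on spheres. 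The result is a function of $t$ that is $C^{N+k}$ up to $t=1$ because $G$ is, and that equals $a_\ell\,{}_2F_1(1,1;N+k+2;t^2)\,t^{p+q}$ times a constant. This is a contradiction, so all $a_\ell=0$ and $G\equiv0$.

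\textbf{Step 3: conclude.} By continuity of $F_c$ up to the sphere, $G|_{\partial\Omega_0(c)}=\Pi_{H^{k+1,1}}(F_c|_{\partial\Omega_0(c)})=\Pi_{H^{k+1,1}}(f(c+\cdot))$, and this vanishes. I expect the main obstacle to be the bookkeeping in Step~1. One must justify that the isotypic averaging commutes with \eqref{80} and preserves $C^{N+k}$ up to the boundary. One must also check that regularity at $w=0$ selects the regular hypergeometric solution even though \eqref{80} degenerates only at the boundary. The nonvanishing of the logarithmic coefficient in Step~2 is routine.
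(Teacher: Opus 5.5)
Your proposal is correct and follows essentially the same route as the paper. Both arguments project $F_c$ onto the $H^{\sigma-\frac nr+1,1}$ component using the $U(b+1)$-invariance of \eqref{80}, and reduce to the hypergeometric equation with parameters $(1,1;b+\sigma-\frac nr+3)$. Both then discard the solution that is singular like $x^{-(b+\sigma-\frac nr+2)}$ at the origin, and contradict $C^{b+1+\sigma-\frac nr}$ regularity with the term $(1-x)^{b+1+\sigma-\frac nr}\log(1-x)$ at $x=1$. The differences are cosmetic: you derive the radial equation for a general bidegree $(p,q)$ before specializing, and you phrase the condition at the origin as $C^2$ regularity rather than the paper's bound $x^{(p+1)/2}\Phi_\ell(x)=O(1)$.
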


\begin{proof}
We use the spherical-harmonic method of Graham and Chen--Li
\cite{Graham,ChenLi}.

Put $p=\sigma-\frac nr+1\in\mathbb N$ and
$m=b+p\in\mathbb N$. On $\partial\Omega_0(c)$, write
$g_c(u)=f(c+u)$. We show that its
$H^{p,1}(\mathbb C^{b+1})$ component vanishes.

For $0<t<1$ and $\zeta\in\partial\Omega_0(c)$, set
\[
U_c(t\zeta)
=
\Pi_{H^{p,1}}\bigl[F_c(t\,\cdot)\bigr](\zeta).
\]
By Lemma~\ref{73}, $F_c|_{\partial\Omega_0(c)}=g_c$.
Hence
\[
U_c|_{\partial\Omega_0(c)}
=
\Pi_{H^{p,1}(V_0(c))}g_c
=
\Pi_{H^{p,1}(V_0(c))}\bigl(f(c+\cdot)\bigr).
\] This projection is a
compact-group average and preserves the assumed $C^m$ regularity
on  $\overline{\Omega_0(c)}$. The operator in \eqref{80} commutes with the
$U(b+1)$-action; hence $U_c$ also satisfies \eqref{80}.

Choose homogeneous harmonic polynomials $(Y_\ell)$ whose restrictions
form an orthonormal basis of $H^{p,1}(\mathbb C^{b+1})$ on the unit
sphere. For $w\ne0$, we can write $U_c$ as
\[
\sum_\ell Y_\ell(w)\Phi_\ell(x),
\qquad x=\|w\|^2.
\]
For a harmonic polynomial $Y$ of bidegree $(p,1)$,
$\sum_jw_j\partial_jY=pY$ and
$\sum_j\bar w_j\bar\partial_jY=Y$. Using these identities and
$\sigma-\frac nr=p-1$ in \eqref{80}, then using the linear
independence of the $Y_\ell$ on each sphere, we find that each
$\Phi_\ell$ satisfies
\begin{equation}\label{83}
x(1-x)\Phi''+(m+2-3x)\Phi'-\Phi=0.
\end{equation}
This is Gauss's hypergeometric equation with parameters
$1,1$ and $m+2$.

Since the restrictions of the $Y_\ell$ are orthonormal, therefore
\[
t^{p+1}\Phi_\ell(t^2)
=
\int_{\partial\Omega_0(c)}
U_c(t\zeta)\overline{Y_\ell(\zeta)}\,d\zeta,
\qquad 0<t<1,
\]
where $d\zeta$ is normalized surface measure. Because
$U_c\in C^m(\overline{\Omega_0(c)})$, the integral is bounded
as $t\downarrow0$ and is $C^m$ up to $t=1$. The change of
variable $x=t^2$ is smooth and invertible near $t=1$.
Consequently,
\[
x^{(p+1)/2}\Phi_\ell(x)=O(1)\quad(x\downarrow0),
\qquad
\Phi_\ell\in C^m\text{ near }x=1.
\]

At $x=0$, equation~\eqref{83} has the regular solution
${}_2F_1(1,1;m+2;x)$. An independent solution has leading
order $x^{-m-1}$ and violates the first condition, since
$p+1<2(m+1)$. Hence
\[
\Phi_\ell(x)=a_\ell\,{}_2F_1(1,1;m+2;x).
\]
It follows that
\[
U_c(w)=Y(w)\,{}_2F_1(1,1;m+2;\|w\|^2)
\]
for a harmonic polynomial $Y$ of bidegree $(p,1)$.

Lemma~\ref{551}, with $\lambda=m+1$, $\mu=m$, and
$\rho=1-x$, gives
\[
{}_2F_1(1,1;m+2;x)
=A(1-x)+(-1)^{m+1}(m+1)x^{-m-1}
(1-x)^m\log(1-x),
\]
where $A$ is analytic near $0$. If $Y\ne0$, choose
$\zeta\in\partial\mathbb B^{b+1}$ with $Y(\zeta)\ne0$.
Along $w=t\zeta$, the factor $Y(t\zeta)=t^{p+1}Y(\zeta)$
is smooth and nonzero near $t=1$, whereas
$(1-t^2)^m\log(1-t^2)$ is not $C^m$ there. This contradicts
the $C^m$ regularity of $U_c$. Hence $U_c=0$. Its boundary value is the
$H^{p,1}(\mathbb C^{b+1})$ component of $g_c$,
so this component vanishes.
\end{proof}

We now convert the vanishing in Lemma~\ref{81} into the boundary
orthogonality needed at the origin.

\begin{proposition}\label{84}
Let $q\ge1$ and $f\in C(S)$.  Suppose that, for every tripotent $c$ of
rank $r-1$, \(\Pi_{H^{q,1}(V_0(c))} \bigl[w\mapsto f(c+w)\bigr]=0\).
Then \(f\perp\mathcal M_q\).
\end{proposition}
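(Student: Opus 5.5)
The plan is to dualize the hypothesis and use $K$-equivariance to move it from the slices $c+\partial\Omega_0(c)$ to the whole Shilov boundary. Let $\mathcal C_{r-1}$ be the manifold of tripotents of rank $r-1$. It is a single $K$-orbit, so it carries a $K$-invariant probability measure $d\mu(c)$. For each $c$, let $d\zeta_c$ be the normalized surface measure on $\partial\Omega_0(c)$. The map $(c,u)\mapsto c+u$ sends the fibred space $\{(c,u):c\in\mathcal C_{r-1},\ u\in\partial\Omega_0(c)\}$ onto $S$, since every maximal tripotent splits as a rank $r-1$ tripotent plus a minimal one. The pushforward of $d\zeta_c\,d\mu(c)$ is a $K$-invariant probability measure on $S=K/K_1$, hence equals $du$. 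This yields the slice integration formula
\[
\int_S\varphi\,du=\int_{\mathcal C_{r-1}}\int_{\partial\Omega_0(c)}\varphi(c+\zeta)\,d\zeta_c(\zeta)\,d\mu(c),
\qquad \varphi\in C(S).
\]

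Next I would introduce the averaging operator $A:L^2(S)\to L^2(S)$ defined by
\[
(A g)(\omega)=\int_{\{c:\ \omega-c\in\partial\Omega_0(c)\}}\Pi_{H^{q,1}(V_0(c))}\bigl[g(c+\cdot)\bigr](\omega-c)\,d\mu_\omega(c).
\]
Here $d\mu_\omega$ is the disintegration of $d\mu$ over the fibre at $\omega$. By the slice formula and self-adjointness of each $\Pi_{H^{q,1}}$, one has
\[
\langle Ag,g'\rangle_S=\int_{\mathcal C_{r-1}}\bigl\langle \Pi_{H^{q,1}}g(c+\cdot),\Pi_{H^{q,1}}g'(c+\cdot)\bigr\rangle\,d\mu(c).
\]
So $A$ is bounded, positive and $K$-equivariant. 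The hypothesis says that every slice component of $f$ vanishes, so $\langle Af,f\rangle=0$, and positivity gives $f\in\ker A$. By Schur's lemma, $A$ acts on each $K$-isotypic component of $L^2(S)$. The statement therefore reduces to showing that $A$ is injective on $\mathcal M_q$, or more precisely that $\langle Am,m\rangle>0$ for every nonzero $m\in\mathcal M_q$. Equivalently, no nonzero $m\in\mathcal M_q$ has all its slice $H^{q,1}$-components equal to zero. Granting this, let $\Pi_{\mathcal M_q}$ be the orthogonal projection onto $\mathcal M_q$. It is the restriction to $\mathcal M_q$ of a sum of isotypic projections, so it commutes with $A$. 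Hence $A\Pi_{\mathcal M_q}f=\Pi_{\mathcal M_q}Af=0$, which forces $\Pi_{\mathcal M_q}f=0$, that is, $f\perp\mathcal M_q$.

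For the positivity I would use the explicit generators in \eqref{59}. Take $p=\ell_\xi^q$ with $\xi$ of rank one and $\xi\in V_0(c)$. Restricted to $c+V_0(c)$, the coordinate $\ell_\xi(c+w)=\langle w,\xi\rangle$ is linear. The generator
\[
\ell_\xi^q\,\overline{\ell_{\xi'}}-\frac{q\langle \xi',\xi\rangle\,\ell_\xi^{q-1}}{n/r+q-1}
\]
then restricts to a polynomial on the ball $\Omega_0(c)$ of bidegree $(q,1)$ plus a $(q-1,0)$ correction. Its $H^{q,1}$-component is the harmonic part of $\langle w,\xi\rangle^q\overline{\langle w,\xi'\rangle}$. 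This is nonzero whenever $\xi'\perp\xi$ inside $V_0(c)$, which is possible because $\dim V_0(c)=b+1\ge2$ in the non-tube case. So $A$ is nonzero on some vector of $\mathcal M_q$.

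The main obstacle is upgrading this to injectivity on all of $\mathcal M_q$. Outside the tube case, Lemma~\ref{64} is not available: $\mathcal M_q$ may be reducible, and $S$ need not be multiplicity-free. First I would try to determine the $K$-types of $\mathcal M_q$ via the non-tube analogue of Zhang's product formula. I expect $\mathcal P_{(q)}|_S\,\overline{\mathcal P_{(1)}|_S}$ to consist of $\mathcal P_{(q-1)}|_S$ together with at most two further irreducible pieces. For each piece I would exhibit a vector, built from $\ell_\xi$ and $\overline{\ell_{\xi'}}$ with $\xi,\xi'$ adapted to a frame containing $c$, whose slice $H^{q,1}$-component at $c$ is nonzero. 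An explicit frame computation of this kind is presumably what the appendix carries out.
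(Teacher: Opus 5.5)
Your framework matches the paper's. Your slice formula is Lemma~\ref{93}, your $A$ agrees on $\mathcal E_q|_S$ with $\mathcal A_q$ from \eqref{95}, and $\langle Ag,g'\rangle_S=\int\langle R_cg,R_cg'\rangle\,d\mu(c)$ is \eqref{112}. The gap comes right after this, and it is in the reduction. For a positive $A$, the condition $\langle Am,m\rangle>0$ on $\mathcal M_q\setminus\{0\}$ only gives $\ker A\cap\mathcal M_q=\{0\}$. To pass from $Af=0$ to $f\perp\mathcal M_q$ you need $\ker A\perp\mathcal M_q$, i.e.\ $\mathcal M_q\subseteq\overline{\operatorname{Ran}A}$. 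For example, the orthogonal projection of $\mathbb C^2$ onto $\mathbb C(1,1)$ is positive on $\mathbb C(1,0)$, yet its kernel $\mathbb C(1,-1)$ is not orthogonal to $\mathbb C(1,0)$. You bridge this by claiming that $\Pi_{\mathcal M_q}$ is a sum of isotypic projections and therefore commutes with $A$. That holds only if every $K$-type of $\mathcal M_q$ occurs in $L^2(S)$ solely inside $\mathcal M_q$. This is exactly the multiplicity information you concede is unavailable off the tube case; Lemma~\ref{64} supplies it only for tube type. Equivariance and positivity alone do not give $A(\mathcal M_q)\subseteq\mathcal M_q$; you have to compute $A$ on $\mathcal E_q|_S$.

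The positivity itself, which is the real content, is also left open. Your test vector only shows $A|_{\mathcal M_q}\neq0$, and the non-tube product formula you hope will split $\mathcal M_q$ into at most two irreducibles is neither cited nor proved. The paper needs no decomposition of $\mathcal M_q$ at all. Using Lemma~\ref{97} and the fibre averages \eqref{102}--\eqref{104}, which rely only on multiplicity one of the holomorphic types $\mathcal P_{(m)}$, it shows that $\mathcal A_qU$ differs from $\kappa\,(\mathsf TU)|_S$ by an element of $\mathcal P_{(q-1)}|_S$; see \eqref{111}. Here $\kappa\neq0$ is a constant independent of $U$. Since $R_ch=0$ for $h\in\mathcal P_{(q-1)}$, \eqref{112} gives $\mathcal A_qU\perp\mathcal P_{(q-1)}|_S$. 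Hence $\mathcal A_qU=\kappa\,(I-\Pi_H)\bigl((\mathsf TU)|_S\bigr)$, which is \eqref{114}. The operator $\mathsf T$ is invertible on $\mathcal E_q$ by the Fischer identity \eqref{108}, which is where $a,b>0$ are used. Therefore $\operatorname{Ran}\mathcal A_q=\mathcal M_q$ (Lemma~\ref{109}). This is precisely the inclusion $\mathcal M_q\subseteq\operatorname{Ran}A$ your argument is missing, and the hypothesis together with \eqref{112} then yields $f\perp\mathcal M_q$ directly.
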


\begin{proof}
If $r=1$, then $\Omega$ is the unit ball of $V$,
and $0$ is the only tripotent of rank $r-1$. By \eqref{10},
$\mathcal P_{(q)}$ is the full space of holomorphic homogeneous
polynomials of degree $q$. Thus the left-hand side of \eqref{60}
is the restriction to $S$ of all polynomials of bidegree $(q,1)$.
The decomposition of bihomogeneous polynomials in
\cite[Section~2.2, Lemma~3]{BSW}, restricted to the unit sphere, gives
\[
\operatorname{span}\{(p\bar\ell)|_S:
p\in\mathcal P_{(q)},\ \ell\in\mathcal P_{(1)}\}
=H^{q,1}(V)\oplus\mathcal P_{(q-1)}|_S,
\]
with orthogonal summands. Comparing this with \eqref{60} gives
$\mathcal M_q=H^{q,1}(V)$.
The hypothesis is then $\Pi_{H^{q,1}(V)}f=0$, so
$f\perp\mathcal M_q$.

The case $r>1$ is proved in Appendix~\ref{90}.
\end{proof}

\begin{theorem}\label{70}
Let $f\in L^2(S)$ and put $F=P_{\sigma,\frac nr}f$.
If $F\in C^{b+1+\sigma-\frac nr}(\overline\Omega)$, then
\begin{equation}\label{71}
\bar\partial\mathcal D_{\sigma-\frac nr+1}F=0.
\end{equation}
\end{theorem}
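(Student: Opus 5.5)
The plan follows the proof of Theorem~\ref{67}. The irreducibility input of Lemma~\ref{64} is replaced by the chain Lemma~\ref{81} $\Rightarrow$ Proposition~\ref{84} $\Rightarrow$ Lemma~\ref{61}, and transitivity is supplied by Lemma~\ref{35}. Put $q=\sigma-\frac nr+1\in\mathbb N$ and $m=b+q=b+1+\sigma-\frac nr$, so the hypothesis reads $F\in C^m(\overline\Omega)$.

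\emph{Step 1 (continuity of boundary data).} Since $m\ge1$, $F$ extends continuously to $\overline\Omega$. By Lemma~\ref{73}, $f$ has a continuous representative and $f=F|_S$. We replace $f$ by this representative, so $f\in C(S)$, as Lemma~\ref{81} and Proposition~\ref{84} require.

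\emph{Step 2 (vanishing on every Peirce ball).} Fix any tripotent $c$ of rank $r-1$. Lemma~\ref{81} applies directly and gives $\Pi_{H^{q,1}(V_0(c))}\bigl(f(c+\cdot)\bigr)=0$, because its regularity hypothesis is exactly $C^{m}(\overline\Omega)$. Since $c$ was arbitrary, Proposition~\ref{84} yields $f\perp\mathcal M_q$. Then Lemma~\ref{61} gives $\bar\partial\mathcal D_qF(0)=0$.

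\emph{Step 3 (moving the origin).} For $\widetilde g\in\widetilde G$, set $f_{\widetilde g}=\pi_{\sigma,\frac nr}(\widetilde g)f$ and $F_{\widetilde g}=P_{\sigma,\frac nr}f_{\widetilde g}$. Lemma~\ref{29} gives $F_{\widetilde g}\in C^m(\overline\Omega)$, so Steps~1--2 apply to $f_{\widetilde g}$. Alternatively, continuity of $f_{\widetilde g}$ is visible directly from \eqref{26} and Lemma~\ref{28}, since the Jacobian factors are continuous on $S$. Hence $\bar\partial\mathcal D_qF_{\widetilde g}(0)=0$ for every $\widetilde g$, and Lemma~\ref{35} (with $L=q-1$) gives $\bar\partial\mathcal D_qF=0$ on $\Omega$, which is \eqref{71}.

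The substantive difficulty is entirely in Proposition~\ref{84} for $r>1$. That step passes from the vanishing of the $H^{q,1}$ components on all slices $c+\partial\Omega_0(c)$ to the global orthogonality $f\perp\mathcal M_q$, and it is deferred to the Appendix. Given that, the only point needing care here is that Lemma~\ref{81} requires $f\in C(S)$ rather than $f\in L^2(S)$, which Step~1 settles through Lemma~\ref{73}. I also need to check that every transported datum $f_{\widetilde g}$ inherits this continuity, which holds by the same lemma because $F_{\widetilde g}$ is again continuous up to $\partial\Omega$.
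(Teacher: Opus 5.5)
Your proposal is correct and follows the paper's proof essentially step for step. You use Lemma~\ref{73} to get a continuous representative, then Lemma~\ref{81}, Proposition~\ref{84} and Lemma~\ref{61} to obtain $\bar\partial\mathcal D_qF(0)=0$, and finally Lemma~\ref{29} with Lemma~\ref{35} (at $L=q-1$) to move the origin. Your explicit check that the transported data $\pi_{\sigma,\frac nr}(\widetilde g)f$ remain continuous, either via Lemma~\ref{73} again or directly from \eqref{26}, spells out a point the paper leaves implicit when it says ``applying the preceding argument.''
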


\begin{proof}
Put $q=\sigma-\frac nr+1$. By Lemma~\ref{73}, $f$ has the
continuous representative $F|_S$. Lemma~\ref{81} gives
\(\Pi_{H^{q,1}(V_0(c))}[w\mapsto f(c+w)]=0\)
for every tripotent $c$ of rank $r-1$. Proposition~\ref{84}
therefore yields $f\perp\mathcal M_q$, and Lemma~\ref{61} gives
\(\bar\partial\mathcal D_qF(0)=0\).

For $\widetilde g\in\widetilde G$, put
\(F_{\widetilde g}=P_{\sigma,\frac nr}
\pi_{\sigma,\frac nr}(\widetilde g)f\).
By Lemma~\ref{29}, $F_{\widetilde g}$ has the same boundary
regularity as $F$. Applying the preceding argument to the
boundary datum $\pi_{\sigma,\frac nr}(\widetilde g)f$ gives
\(\bar\partial\mathcal D_qF_{\widetilde g}(0)=0\).
Lemma~\ref{35} now transfers these identities at the origin
to every point of $\Omega$. Hence
$\bar\partial\mathcal D_qF=0$ on $\Omega$, which is \eqref{71}.
\end{proof}

\section{Proof of the main theorem and Bergman-harmonic functions}
\label{87}

We first complete the proof of Theorem~\ref{3}.

\begin{proof}[Proof of Theorem~\ref{3}]
In the scalar realization, $F=P_{\sigma,\theta}f$, where
$\sigma=\frac nr-\frac{s+\nu}{2}$ and
$\theta=\frac nr-\frac{s-\nu}{2}$. The first and second rows
follow from Proposition~\ref{54} and Theorem~\ref{56}, respectively.

In the third row, $s=\nu\in-\mathbb N_0$ and
$\mathbb D_{-\nu,0}F=\mathcal D_{1-\nu}F$ in the
Harish--Chandra trivialization by \eqref{36}. Theorems~\ref{67}
and~\ref{70} give $\bar\partial\mathcal D_{1-\nu}F=0$ in
the tube and non-tube cases, respectively. Since the Dolbeault
operator acts componentwise in this trivialization, this is the
equation in the third row.

In the fourth row, set $m=\sigma\in\mathbb N$ and
$q=-\theta\in\mathbb N_0$. By \eqref{36},
\[
\mathbb D_{q,m}F
=h(z,z)^m\mathcal D_{q+1}\bigl(h(z,z)^{-m}F\bigr).
\]
Thus Proposition~\ref{54} gives the fourth row.

In the fifth row, set $L=\sigma-\frac nr\in\mathbb N_0$
and $j=\frac nr-\theta\in\mathbb N$. The condition
$\theta\notin-\mathbb N_0$ allows us to apply
Theorem~\ref{57}. Since \eqref{36} gives
\[
\mathbb D_{L,j}F
=h(z,z)^j\mathcal D_{L+1}\bigl(h(z,z)^{-j}F\bigr),
\]
that theorem yields the fifth row.

The parameter sets in \eqref{52} are exactly the union of
these five rows. The final row therefore follows from
Proposition~\ref{51}.
\end{proof}

\begin{proof}[Proof of Corollary~\ref{4}]
By \eqref{386}--\eqref{387}, $\Delta_\Omega$ is a positive
constant multiple of the Laplace--Beltrami operator of the
Bergman metric. The assumed boundary regularity gives
$F\in C(\overline\Omega)$, so Inoue's Poisson representation
\cite[Theorem, pp.~532--533]{Inoue} applies.

By \eqref{151}, its Poisson--Szeg\H{o} kernel is
\[
\frac{|C(z,\omega)|^2}{C(z,z)}
=
\frac{h(z,z)^{n/r}}{|h(z,\omega)|^{2n/r}}.
\]
Both representations use the normalized $K$-invariant probability
measure $du$ on $S$.
Hence $F=P_{n/r,n/r}(F|_S)$ by \eqref{8}, with
$F|_S\in C(S)\subset L^2(S)$. This is the scalar form of
$P_{0,0}$. The third row of Theorem~\ref{3} and \eqref{36}
therefore give $\bar\partial\mathcal D_1F=0$.

Since $\mathcal P_{(1)}$ consists of linear holomorphic
polynomials, the coordinates of $\mathcal D_1F$  are
$(\partial_1F,\ldots,\partial_nF)$. Thus
$\bar\partial_j\partial_iF=0$ for all $i,j$, so $F$ is
pluriharmonic.
\end{proof}

\appendix
\section{Proof of Proposition~\ref{84}}
\label{90}

Throughout this appendix $\Omega$ is irreducible and
non-tube, $r>1$, and $q\ge1$. Thus $a,b>0$. Let
\[
\mathcal E_q
=
\operatorname{span}
\{p\overline\ell:p\in\mathcal P_{(q)},\ \ell\in\mathcal P_{(1)}\}.
\]
By \eqref{60}, we have the following orthogonal decomposition:
\begin{equation}\label{91}
\mathcal E_q|_S
=
\mathcal P_{(q-1)}|_S\oplus\mathcal M_q.
\end{equation}
By Lemma~\ref{16},
\[
\mathcal E_q
=
\operatorname{span}
\left\{
\ell_\xi^q\overline{\ell_v}:
\xi,v\in V,\ \operatorname{rank}\xi=1
\right\}.
\]
Together with Lemma~\ref{21}, this also gives
\begin{equation}\label{96}
\mathcal M_q
=
\operatorname{span}
\left\{
\left.
\left(
\ell_\xi^q\overline{\ell_v}
-
\frac{q\langle v,\xi\rangle}
{\frac nr+q-1}\ell_\xi^{q-1}
\right)
\right|_S:
\xi,v\in V,\ \operatorname{rank}\xi=1
\right\}.
\end{equation}
Indeed, \(\partial_v\ell_\xi^q = q\langle v,\xi\rangle\ell_\xi^{q-1}\).
Thus all calculations on $\mathcal E_q$ below may be reduced to
$\ell_\xi^q\overline{\ell_v}$.

 Let $$\mathcal B_{q,1}=\operatorname{span}\{p\overline{\ell}:p\in \bigoplus_{|\alpha|=q}\mathcal P_\alpha,\ \ell\in \mathcal P_{(1)}\}.$$ Equip $\mathcal B_{q,1}$ with the inner product determined by
\begin{equation}\label{91a}
\langle p_1\overline{\ell_1},p_2\overline{\ell_2}\rangle_F
=
\langle p_1,p_2\rangle_F
\langle\ell_2,\ell_1\rangle_F,
\end{equation}
and give $\mathcal E_q\subset\mathcal B_{q,1}$ the induced
inner product.

Fix a tripotent $c_0$ of rank $r-1$.  Recall that \(\Omega_0(c_0) = \Omega\cap V_0(c_0)\)
is the unit ball of the rank-one space $V_0(c_0)$, which has complex
dimension $b+1$, and \(c_0+\partial\Omega_0(c_0)\subset S\).
All Haar measures and sphere measures below have total mass one.

\begin{definition}For $s\in S$, let $\mathcal I_s$ be the set of minimal tripotents
$e\in V$ for which $s-e$ is a rank-$(r-1)$ tripotent orthogonal to $e$.
\end{definition}

Let \(K_s = \{k\in K:ks=s\}\).
For
$e,e'\in\mathcal I_s$, choose Jordan frames such that \[s=e_1+\cdots+e_{r-1}+e, \quad s=e'_1+\cdots+e'_{r-1}+e'.\]
By the transitivity of $K$ on Jordan frames
\cite[Section~2]{KZ}, there is $k\in K$ such that
\(ke_j=e'_j\) for $1\le j\le r-1$ and \(ke=e'\).
Then $ks=s$, so $k\in K_s$ and thereforebv.n[ $K_s$ acts transitively on $\mathcal I_s$.  Fix any $e_0\in\mathcal I_s$.
We define a probability measure $\mu_s$ on $\mathcal I_s$ by
\begin{equation}\label{92}
\int_{\mathcal I_s}f(e)\,d\mu_s(e)
=
\int_{K_s}f(he_0)\,dh,
\qquad f\in C(\mathcal I_s),
\end{equation}
where $dh$ is the normalized Haar measure on $K_s$.
Transitivity of the $K_s$-action and right invariance of Haar measure
show that this definition is independent of $e_0$.
Moreover, the measure $\mu_s$ is $K_s$-invariant.

Fix $e_0\in\partial\Omega_0(c_0)$ and put \(s_0=c_0+e_0\).
Let $d\omega$ be normalized sphere measure on
$\partial\Omega_0(c_0)$.

\begin{lemma}\label{93}
Put \(X = \{(s,e):s\in S,\ e\in\mathcal I_s\}\).
Then, for every $F\in C(X)$,
\begin{equation}\label{94}
\int_K
F(ks_0,ke_0)\,dk=\int_K\int_{\partial\Omega_0(c_0)}
F\bigl(k(c_0+\omega),k\omega\bigr)\,d\omega\,dk
=
\int_S\int_{\mathcal I_s}
F(s,e)\,d\mu_s(e)\,du(s).
\end{equation}
\end{lemma}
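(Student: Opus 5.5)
We prove the two equalities in \eqref{94} separately. Both are instances of the same principle: uniqueness of a $K$-invariant probability measure on a homogeneous $K$-space. Concretely, we will show that $X$ is a single $K$-orbit, and that each of the three functionals in \eqref{94} is a $K$-invariant probability measure on $X$. A compact group acting transitively on a compact space carries exactly one invariant probability measure, so all three functionals agree.

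First we check transitivity. Let $(s,e),(s',e')\in X$. Then $s-e$ and $s'-e'$ are rank-$(r-1)$ tripotents, orthogonal to $e$ and $e'$ respectively. Decompose $s-e=e_1+\cdots+e_{r-1}$ and $s'-e'=e'_1+\cdots+e'_{r-1}$ into minimal tripotents. Then $(e_1,\dots,e_{r-1},e)$ and $(e'_1,\dots,e'_{r-1},e')$ are Jordan frames. By transitivity of $K$ on Jordan frames \cite[Section~2]{KZ}, there is $k\in K$ carrying the first frame to the second, and this $k$ maps $(s,e)$ to $(s',e')$. In particular $X=K\cdot(s_0,e_0)$. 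Here $e_0\in\mathcal I_{s_0}$, because $s_0-e_0=c_0$ is a rank-$(r-1)$ tripotent orthogonal to $e_0$. The same argument shows $(c_0+\omega,\omega)\in X$ for every $\omega\in\partial\Omega_0(c_0)$, since such $\omega$ is a minimal tripotent in $V_0(c_0)$. Also $X$ is closed in the compact space $S\times\{\text{minimal tripotents}\}$, so $X$ is compact and all three integrands are well defined and continuous.

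Next we check that each functional is a $K$-invariant probability measure. The left-hand side is the pushforward of Haar measure under $k\mapsto k(s_0,e_0)$, so it is invariant by left invariance of $dk$. The middle expression is an average of pushforwards of $dk$ under $k\mapsto k(c_0+\omega,\omega)$, one for each fixed $\omega$. Each of these is $K$-invariant for the same reason, so the average is too; continuity in $\omega$ lets us apply Fubini. In fact, transitivity already shows each inner pushforward equals the left-hand side, so integrating over $d\omega$ gives the first equality directly.

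For the right-hand side, fix $k\in K$. We use the substitutions $s=ks'$ and $e=ke'$. Note that $k\mathcal I_{s'}=\mathcal I_{ks'}$. By \eqref{92}, $\mu_{ks'}$ is the pushforward of $\mu_{s'}$ under $k$: the map $h\mapsto khk^{-1}$ carries $K_{s'}$ onto $K_{ks'}$ and preserves normalized Haar measure, and the definition of $\mu_s$ is independent of the base point. Combining this with $K$-invariance of $du$ gives invariance of the right-hand side. Uniqueness of the invariant probability measure on $X\cong K/\operatorname{Stab}(s_0,e_0)$ then gives the second equality.

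The main obstacle is the technical point that the right-hand side is well defined: we need $s\mapsto\int_{\mathcal I_s}F\,d\mu_s$ to be measurable (indeed continuous) on $S$. To handle this, we write $s=k's_0$ and use \eqref{92} together with the conjugation just described, which gives
$$\int_{\mathcal I_s}F\,d\mu_s=\int_{K_{s_0}}F(k's_0,k'he_0)\,dh.$$
The right-hand side is independent of the choice of $k'$ and continuous in $k'$. Since $S=K/K_{s_0}$, it is therefore continuous in $s$. With this formula the second equality can also be verified directly: by the quotient integral formula,
$$\int_K F(ks_0,ke_0)\,dk=\int_{K/K_{s_0}}\int_{K_{s_0}}F(k'hs_0,k'he_0)\,dh\,d(k'K_{s_0}),$$
and since $hs_0=s_0$ for $h\in K_{s_0}$ this equals $\int_S\int_{\mathcal I_s}F\,d\mu_s\,du(s)$. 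This identification uses that $du$ is the image of normalized Haar measure on $K/K_{s_0}$.
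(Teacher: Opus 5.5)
Your proposal is correct and follows essentially the paper's argument. Transitivity of $K$ on $X$ plus right invariance of Haar measure gives the first equality, and disintegrating $dk$ over $K_{s_0}$, using $\mu_{ks_0}=k_*\mu_{s_0}$ and $hs_0=s_0$, gives the second; your uniqueness-of-invariant-measure framing is an equivalent repackaging. Your explicit check that $s\mapsto\int_{\mathcal I_s}F(s,e)\,d\mu_s(e)$ is continuous, via the formula $\int_{K_{s_0}}F(k's_0,k'he_0)\,dh$, is a welcome detail that the paper leaves implicit.
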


\begin{proof}
Since $K$ preserves the rank and orthogonality of tripotents, therefore for $k\in K$ and $(s,e)\in X$, \(k\cdot(s,e):=(ks,ke)\in X\) and then the group $K$ acts on $X$.
Given $(s,e)\in X$,
choose $k_1\in K$ with $k_1s_0=s$. Then $k_1e_0\in\mathcal I_s$.
Since $K_s$ acts transitively on $\mathcal I_s$, there is
$k_2\in K_s$ such that $k_2k_1e_0=e$. Hence
$(s,e)=(k_2k_1s_0,k_2k_1e_0)$ and therefore the action of $K$ on $X$ is transitive. 

For each $\omega\in\partial\Omega_0(c_0)$, the pair
$(c_0+\omega,\omega)$ belongs to $X$. Since $K$ acts transitively
on $X$, right invariance of Haar measure gives
\[
\int_K F\bigl(k(c_0+\omega),k\omega\bigr)\,dk
=
\int_K F(ks_0,ke_0)\,dk.
\]
Integrating over $\omega$ proves the equality of the first two
integrals in \eqref{94}.

By \eqref{92}, $\mu_{ks_0}=k_*\mu_{s_0}$ for $k\in K$.
Also, $du$ is the image of normalized Haar measure under
$k\mapsto ks_0$. Hence
\[
\begin{aligned}
\int_S\int_{\mathcal I_s}F(s,e)\,d\mu_s(e)\,du(s)
&=\int_K\int_{K_{s_0}}F(ks_0,khe_0)\,dh\,dk\\
&=\int_{K_{s_0}}\int_K F(khs_0,khe_0)\,dk\,dh=\int_K F(ks_0,ke_0)\,dk.
\end{aligned}
\]
The second equality uses $hs_0=s_0$; the last uses right
invariance of Haar measure on $K$. This proves \eqref{94}.
\end{proof}

\begin{definition}For a tripotent $c$ of rank $r-1$, define \(R_c:C(S)\longrightarrow H^{q,1}(V_0(c))\)
by \[R_ch = \Pi_{H^{q,1}(V_0(c))} \bigl[w\mapsto h(c+w)\bigr],\ \ h\in C(S).\]
For $P\in\mathcal E_q$, define
\begin{equation}\label{95}
(\mathcal A_qP)(s)
=
\int_{\mathcal I_s}
(R_{s-e}(P|_S))(e)\,d\mu_s(e),
\qquad
s\in S.
\end{equation}
\end{definition}

For $s\in S$, let \(V=V_2(s)\oplus V_1(s)\oplus V_0(s)\) be the corresponding Peirce decomposition. Since  $s$ is a maximal tripotent, therefore  $V_0(s)=\{0\}$. For $j=1,2$, let
\(\operatorname{pr}_j(s):V\longrightarrow V_j(s)\)
be the orthogonal projection. 

\begin{lemma}\label{97}
Let $\xi,v\in V$ with $\operatorname{rank}\xi=1$, let $s\in S$, and
$e\in\mathcal I_s$. Put \[c=s-e, \quad W=V_0(c),\ \Omega_0(c)=W\cap \Omega\]
and let
$
\operatorname{pr}_W:V\longrightarrow W
$
be the orthogonal projection. Then
\begin{equation}\label{98}
R_c\bigl(\ell_\xi^q\overline{\ell_v}|_S\bigr)(w)
=
\ell_\xi(w)^q\overline{\ell_v(w)}
-
\frac1{b+q}
\partial_{\operatorname{pr}_Wv}\ell_\xi^q(w),
\qquad
w\in\partial\Omega_0(c).
\end{equation}
In particular,
\begin{equation}\label{99}
R_c\bigl(\ell_\xi^q\overline{\ell_v}|_S\bigr)(e)
=
\frac b{b+q}
\ell_\xi(e)^q\overline{\ell_v(e)}
-
\frac1{b+q}
\partial_{D(e,\bar s)\operatorname{pr}_1(s)v}
\ell_\xi^q(e).
\end{equation}
\end{lemma}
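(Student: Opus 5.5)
The plan is to restrict $P=\ell_\xi^q\overline{\ell_v}$ to the affine slice $c+\overline{\Omega_0(c)}$ and compute the $H^{q,1}$ projection on the unit ball $\Omega_0(c)$ of the rank-one space $W=V_0(c)$, $\dim W=b+1$. For $w\in W$, Peirce orthogonality gives $\ell_\xi(c+w)=\ell_\xi(c)+\ell_\xi(w)$ and $\ell_v(c+w)=\ell_v(c)+\langle w,\operatorname{pr}_Wv\rangle$. Expanding $(\ell_\xi(c)+\ell_\xi(w))^q\,\overline{(\ell_v(c)+\ell_{\operatorname{pr}_Wv}(w))}$ yields bihomogeneous pieces of bidegree $(j,0)$ and $(j,1)$ in $(w,\bar w)$ with $j\le q$. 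Only the top piece $\ell_\xi(w)^q\overline{\ell_{\operatorname{pr}_Wv}(w)}$ has bidegree $(q,1)$, and on the sphere $\partial\mathbb B^{b+1}$ restrictions of polynomials of bidegree $(j,k)$ lie in $\bigoplus_{i\le\min(j,k)}H^{j-i,k-i}$. So pieces of bidegree $(j,0)$ and $(j,1)$ with $j<q$ are orthogonal to $H^{q,1}$, since they contribute only to $H^{j,0}$, $H^{j,1}$ and $H^{j-1,0}$.

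Two facts are needed about $\ell_\xi|_W$. First, $w\mapsto\ell_\xi(w)^q$ is a homogeneous polynomial on $W$; it is holomorphic, hence harmonic. Second, I would not need $\xi$ to be rank one on $W$: the top piece is $p(w)\overline{\ell_{u}(w)}$ with $p=\ell_\xi^q|_W$ of degree $q$ and $u=\operatorname{pr}_Wv$. By the unit-ball case, namely Lemma~\ref{21} with $n/r=b+1$, $r=1$, $\mathcal P_{(q)}$ the full degree-$q$ space, together with the decomposition from \cite{BSW} used in Proposition~\ref{84}, the restriction of $p\bar\ell_u$ to the sphere splits as
\[
\Bigl(p\bar\ell_u-\tfrac{\partial_u p}{b+q}\Bigr)\oplus\tfrac{\partial_u p}{b+q},
\]
with the first summand in $H^{q,1}$. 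Concretely, $p\bar\ell_u-\frac{\|w\|^2\partial_up}{b+q}$ is harmonic, because $\sum_j\partial_j\bar\partial_j(p\bar\ell_u)=\partial_up$ and $\Delta(\|w\|^2\partial_up)=(b+1+q-1)\partial_up$. Since $\|w\|^2=1$ on the sphere, this gives \eqref{98}. One must also check that $\partial_{\operatorname{pr}_Wv}\ell_\xi^q$ computed on $V$ and restricted to $W$ agrees with the derivative of $\ell_\xi^q|_W$ in the direction $\operatorname{pr}_Wv\in W$. This holds because the direction lies in $W$.

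For \eqref{99}, evaluate at $w=e$, a unit minimal tripotent spanning $V_{rr}\subset W$, so $\|e\|=1$. Then $\overline{\ell_{\operatorname{pr}_Wv}(e)}=\overline{\ell_v(e)}$ and $\partial_{\operatorname{pr}_Wv}\ell_\xi^q(e)=q\langle \operatorname{pr}_Wv,\xi\rangle\ell_\xi(e)^{q-1}$. The goal is to rewrite $\operatorname{pr}_Wv$ so as to match the claimed form
\[
\ell_\xi(e)^q\overline{\ell_v(e)}-\tfrac1{b+q}\Bigl[\partial_{\operatorname{pr}_Wv}-b\cdot(\text{$e$-component})-\dots\Bigr].
\]
Decompose $W=V_{rr}\oplus V_{0r}$, with $V_{rr}=\mathbb Ce$ and $V_{0r}$ of dimension $b$. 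Relative to the frame for $s$, the space $V_1(s)=\bigoplus_i V_{0i}$ (by \eqref{503} with $m=r$) contains $V_{0r}$, while $V_{rr}\subset V_2(s)$. The $e$-component contributes $\langle v,e\rangle e$, and $\partial_e\ell_\xi^q(e)=q\ell_\xi(e)^{q-1}\langle e,\xi\rangle=q\,\ell_\xi(e)^q$. This produces $\ell_\xi(e)^q\overline{\ell_v(e)}\bigl(1-\frac{1}{b+q}\cdot q\bigr)$; combined with the main term this yields exactly the coefficient $\frac{b}{b+q}$.

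The remaining term is $\partial_{\operatorname{pr}_{V_{0r}}v}\ell_\xi^q(e)$. The key identity to verify is
\[
\operatorname{pr}_{V_{0r}}v=D(e,\bar s)\operatorname{pr}_1(s)v.
\]
For $x\in V_{0i}$, compute $D(e,\bar s)x=\sum_j D(e,\bar e_j)x$ using the Peirce multiplication rules $\{V_{rr},\overline{V_{jj}},V_{0i}\}\subset\delta_{ij}V_{0r}$ (the $\bar e_j$ pairs with the index $i$ only if $j=i$, landing in $V_{0r}$ only if $i=r$). On $V_{0r}$ itself, $D(e,\bar e)$ acts by $1$ by \eqref{502}. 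Hence $D(e,\bar s)$ kills $V_{0i}$ for $i<r$ and is the identity on $V_{0r}$. The main obstacle is this Peirce bookkeeping: showing $D(e,\bar e_j)V_{0r}=0$ for $j\neq r$, and $D(e,\bar e_i)V_{0i}=0$ for $i\ne r$. Both follow from the joint Peirce rules $\{V_{ij},\overline{V_{kl}},V_{mn}\}=0$ unless the indices match, which I would cite from \cite{EYZ} or \cite{KZ}.
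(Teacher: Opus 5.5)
Your proposal is correct and follows the paper's proof essentially step for step. It uses the same harmonic correction $p\overline{\ell_u}-\frac{\|w\|^2}{b+q}\partial_up$ with $p=\ell_\xi^q|_W$ and $u=\operatorname{pr}_Wv$, the same degree argument discarding the lower-order terms of $\ell_\xi(c+w)^q\overline{\ell_v(c+w)}$ (note that the $(q,0)$ piece $\overline{\ell_v(c)}\,\ell_\xi(w)^q$ should also be listed, since it lies in $H^{q,0}\perp H^{q,1}$), and the same evaluation at $e$ via $W=\mathbb Ce\oplus V_{0r}$ together with the fact that $D(e,\bar s)$ is the identity on $V_{0r}$ and zero on $V_{0j}$ for $j<r$.
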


\begin{proof}
Since $\operatorname{rank}c=r-1$, therefore $\dim_{\mathbb C}W=b+1$. Choose orthonormal
coordinates on $W$ and write
$\Delta_W=\sum_{j=1}^{b+1}\partial_{w_j}\bar\partial_{w_j}$.
For a holomorphic homogeneous polynomial $p$ of degree $q$
and $u\in W$, direct differentiation gives
\[
\Delta_W(p\overline{\ell_u})=\partial_up,
\qquad
\Delta_W(\|w\|^2\partial_up)=(b+q)\partial_up.
\]
These identities show that
\(p\overline{\ell_u}
-\frac{\|w\|^2}{b+q}\partial_up\)
is harmonic and of bidegree $(q,1)$. Its restriction to
$\partial\Omega_0(c)$ therefore lies in $H^{q,1}(W)$.
Since  $\|w\|=1$ on $\partial \Omega_0(c)$ and 
$\partial_up|_{\partial \Omega_0(c)}\in H^{q-1,0}(W)$, and $ H^{q-1,0}(W)$ is orthogonal to
$H^{q,1}(W)$, it follows that
\begin{equation}\label{100}
\Pi_{H^{q,1}(W)}\bigl(p\overline{\ell_u}\bigr)
=
p\overline{\ell_u}|_S
-\frac1{b+q}\partial_up|_S
\qquad\text{on }\partial\Omega_0(c).
\end{equation}

Now consider \[w\longmapsto \ell_\xi(c+w)^q\overline{\ell_v(c+w)}, \quad w\in W.\]
Its term of bidegree $(q,1)$ is
$
\ell_\xi(w)^q\overline{\ell_v(w)}.
$
All the other terms have total degree at most $q$, their spherical
harmonic components therefore have degree at most $q$ and are
orthogonal to $H^{q,1}(W)$. Then
\[\Pi_{H^{q,1}(W)}\bigl[w\longmapsto \ell_\xi(c+w)^q\overline{\ell_v(c+w)}]=\Pi_{H^{q,1}(W)}(\ell_\xi^q\overline{\ell_v}).\]
Apply \eqref{100} to \(p(w)=\ell_\xi(w)^q\) and \( u=\operatorname{pr}_Wv\) and $\ell_v(w)=\ell_{\operatorname{pr}_Wv}(w)$ gives \eqref{98}.

It remains to evaluate \eqref{98} at $e$. Complete $e$ to a Jordan
frame $e_1,\ldots,e_r$ such that \[e_r=e, \quad c=e_1+\cdots+e_{r-1}, \quad s=e_1+\cdots+e_r.\]
Applying \eqref{503} to $c$ and $s$, with $V_{ij}$ as in
\eqref{502}, gives
\[W=V_{rr}\oplus V_{0r}=\mathbb Ce\oplus V_{0r}\ \mbox{and}\
V_1(s)=\bigoplus_{j=1}^rV_{0j}.\]
Since $D(e,\bar s)$ is the identity on $V_{0r}$ and is zero on
$V_{0j}$ for $j<r$, one has \[\operatorname{pr}_Wv = \overline{\ell_v(e)}\,e + D(e,\bar s)\operatorname{pr}_1(s)v.\]
Also, \(\partial_e\ell_\xi^q(e) = q\ell_\xi(e)^q\).
Therefore
\[
\partial_{\operatorname{pr}_Wv}\ell_\xi^q(e)
=
q\ell_\xi(e)^q\overline{\ell_v(e)}
+
\partial_{D(e,\bar s)\operatorname{pr}_1(s)v}
\ell_\xi^q(e).
\]
Substituting this into \eqref{98} at $w=e$ gives \eqref{99}.
\end{proof}

\begin{lemma}\label{1000}
For $m\in\mathbb N_0$ and $y\in V$, the Fischer kernels satisfy
\begin{equation}\label{1001}
\left(\frac a2+m\right)
\bar\partial_y^wK^F_{(m+1)}(z,w)
=
\frac a2\ell_y(z)K^F_{(m)}(z,w)
+
\partial_{Q(z)\bar y}^zK^F_{(m)}(z,w).
\end{equation}
If $\xi\in V$ has rank one, then
\begin{equation}\label{1002}
\frac a2\ell_y(z)\ell_\xi(z)^m
+
\partial_{Q(z)\bar y}\ell_\xi^m(z)
=
m!\left(\frac a2+m\right)
\bar\partial_y^wK^F_{(m+1)}(z,w)\big|_{w=\xi}.
\end{equation}
In particular, the polynomial on the left of \eqref{1002} belongs to
$\mathcal P_{(m+1)}$.
\end{lemma}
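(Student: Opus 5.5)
The plan is to prove \eqref{1001} first and then obtain \eqref{1002} by evaluating at $w=\xi$.

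\textbf{Both sides lie in the same space.} As a function of $z$, each side of \eqref{1001} is homogeneous of degree $m+1$. As a function of $w$, each side is antiholomorphic and homogeneous of degree $m$. So both sides are finite sums of products (polynomial in $z$)$\times$(conjugate polynomial in $w$). By the argument in the proof of Lemma~\ref{21}, $\bar\partial^w_y$ maps $\overline{\mathcal P_{(m+1)}}$ into $\overline{\mathcal P_{(m)}}$. Hence the left side lies in $\mathcal P_{(m+1)}\otimes\overline{\mathcal P_{(m)}}$.

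\textbf{Key local identity.} Polarizing the identity for $\bar\partial\log h$ from \cite[formula~(46)]{EYZ} (used in the proof of Lemma~\ref{40}) gives
\[
\bar\partial^w_y\log h(z,w)
=-\bigl\langle B(z,\bar w)^{-1}(z-Q(z)\bar w),y\bigr\rangle .
\]
Expand $B^{-1}=I+D(z,\bar w)+O(|w|^2)$ and use $D(z,\bar w)z=2Q(z)\bar w$. This yields
\[
\bar\partial^w_y\log h(z,w)=-\ell_y(z)-\langle Q(z)\bar w,y\rangle+O(|w|^2),
\qquad
\langle Q(z)\bar w,y\rangle=\partial_{Q(z)\bar y}\ell_w(z).
\]
So the two terms on the right of \eqref{1001} are exactly the first two Taylor terms in $\bar w$ of $-\bar\partial_y\log h$.

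\textbf{Specializing the Faraut--Kor\'anyi expansion.} I will use \eqref{13} at $u=\frac a2$. Here $(\frac a2)_\alpha=0$ whenever $\alpha_2\ge1$, because the factor $(\frac a2-\frac a2)_{\alpha_2}$ vanishes. Hence
\[
h(z,w)^{-a/2}=\sum_m\bigl(\tfrac a2\bigr)_mK^F_{(m)}(z,w).
\]
Apply $\bar\partial^w_y$ and write the result as
\[
\tfrac a2\,h^{-a/2}\bigl(-\bar\partial_y\log h\bigr).
\]
Then project onto bidegree $(m+1,m)$ and onto the $\mathcal P_{(m+1)}$-component in $z$.

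\begin{itemize}
\item On the left, the coefficient is $(\tfrac a2)_{m+1}=(\tfrac a2)_m(\tfrac a2+m)$.
\item On the right, the term $(\tfrac a2)_mK_{(m)}$ multiplies the linear-in-$\bar w$ part above, giving $(\tfrac a2)_m\tfrac a2\ell_yK_{(m)}+(\ldots)$.
\end{itemize}

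The main obstacle is controlling the cross terms. These are $K_{(m-k)}$ times the order-$k$ Taylor terms of $\bar\partial_y\log h$ for $k\ge2$, together with the factor $\frac a2$ versus $1$ in front of the $Q$-term. I would first try to avoid them with a cleaner exact identity. Multiply by $h$, so that
\[
-h\,\bar\partial_y\log h=-\bar\partial_yh
\]
is a polynomial of bidegree at most $(r,r-1)$. Then use \eqref{13} at $u=\frac a2+1$ as well, and compare coefficients of the two resulting generating series. If that bookkeeping resists, the fallback is a direct check. Both sides of \eqref{1001} are determined by pairing in $z$ with $\ell_\eta^{m+1}$ and in $w$ with $\ell_\zeta^m$, for rank-one $\eta,\zeta$ (Lemma~\ref{16}). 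The resulting identity can be verified inside the rank-$\le2$ Jordan subtriple spanned by $\eta,\zeta$, using the Peirce decomposition relative to a frame containing $\eta$. The term $\frac a2\ell_y(z)$ should arise there from $\dim V_{12}=a$.

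\textbf{Deducing \eqref{1002}.} Put $w=\xi$ with $\xi$ of rank one. By \eqref{17}, $K^F_{(m)}(z,\xi)=\ell_\xi(z)^m/m!$. Multiplying \eqref{1001} by $m!$ then gives \eqref{1002}. Its right side is $\bar\partial^w_y$ of an element of $\mathcal P_{(m+1)}$ in $z$, so it lies in $\mathcal P_{(m+1)}$. This proves the final assertion.
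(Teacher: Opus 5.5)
Your specialization of \eqref{13} at $u=\frac a2$ to one-row terms is correct and agrees with the paper. So is your deduction of \eqref{1002} and of the membership claim from \eqref{1001}. The gap is \eqref{1001} itself, which the proposal never proves.

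Truncating the $\bar w$-expansion of $\bar\partial^w_y\log h$ after the linear term leaves, in bidegree $(m+1,m)$, a whole family of cross terms. These are $\frac a2(\frac a2)_{m-k}K^F_{(m-k)}$ times the bidegree-$(k+1,k)$ part of $\langle z^{\bar w},y\rangle$, for $k\ge1$. You leave their control as ``the main obstacle,'' with two unexecuted routes.

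Both routes also aim at too weak a statement. Projecting onto the $\mathcal P_{(m+1)}$-component in $z$, or pairing in $z$ against the $\ell_\eta^{m+1}$, detects only the $\mathcal P_{(m+1)}$-part of the right side of \eqref{1001}. The two terms there are not individually in $\mathcal P_{(m+1)}$. For $V=M_{2,3}(\mathbb C)$ (so $a=2$), $y=E_{11}$, $\xi=E_{22}$, $m=1$, one has $\ell_y\ell_\xi=z_{11}z_{22}$. This has Fischer pairing $1$ with the minor $z_{11}z_{22}-z_{12}z_{21}\in\mathcal P_{(1,1)}$. Only the sum $z_{11}z_{22}+z_{12}z_{21}$ lies in $\mathcal P_{(2)}$. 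That cancellation is exactly the final assertion of the lemma, which Lemma~\ref{101} needs. Your claim that both sides are determined by such pairings therefore presupposes what must be proved.

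The missing idea is an exact identity in which cross terms never arise. Do not Taylor-expand $\langle z^{\bar w},y\rangle$ in $\bar w$. Instead, use the symmetry principle $z^{\bar w}=z+Q(z)\overline{w^{\bar z}}$ together with \eqref{106} to write
\[
\langle z^{\bar w},y\rangle=\ell_y(z)+\langle Q(z)\bar y,w^{\bar z}\rangle .
\]
Then apply the holomorphic logarithmic derivative $\partial^z_uh(z,w)^{-a/2}=\frac a2h(z,w)^{-a/2}\langle u,w^{\bar z}\rangle$, inserting $u=Q(z)\bar y$ after differentiation. This gives
\[
\bar\partial^w_yh(z,w)^{-a/2}=\frac a2\,\ell_y(z)h(z,w)^{-a/2}+\partial^z_{Q(z)\bar y}h(z,w)^{-a/2}.
\]
The operator $\partial^z_{Q(z)\bar y}$ raises the $z$-degree by one and preserves the $\bar w$-degree. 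So after inserting the one-row expansion, the bidegree-$(m+1,m)$ terms are exactly $(\frac a2)_m$ times \eqref{1001}. The higher Taylor terms you were worried about are all absorbed into $\partial^z_{Q(z)\bar y}h^{-a/2}$, and the factor $\frac a2$ versus $1$ on the $Q$-term is resolved automatically. This is the paper's argument.
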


\begin{proof}
For $z,w$ near $0$, recall that
\(z^{\bar w}=B(z,\bar w)^{-1}(z-Q(z)\bar w)\) is the quasi-inverse.
The symmetry principle gives \(z^{\bar w}=z+Q(z)\overline{w^{\bar z}}\).
The logarithmic derivative formulas for the generic norm
\cite[formulas~(1.13)--(1.14)]{WYZR} give
\[
\bar\partial_y^w h(z,w)^{-a/2}
=
\frac a2h(z,w)^{-a/2}\langle z^{\bar w},y\rangle,
\qquad
\partial_u^z h(z,w)^{-a/2}
=
\frac a2h(z,w)^{-a/2}\langle u,w^{\bar z}\rangle.
\]
By the outer symmetry of the Jordan triple product and the adjoint
relation for the trace form
\cite[Section~2.4, Theorem~2.2 (JT2) and formulas~(7)--(8)]{Clerc},
\begin{equation}\label{106}
\begin{aligned}
\langle Q(z)\bar u,y\rangle
&=
\frac12\langle D(z,\bar u)z,y\rangle
=
\frac12\langle z,D(u,\bar z)y\rangle
=
\frac12\langle z,D(y,\bar z)u\rangle
\\
&=
\frac12\langle D(z,\bar y)z,u\rangle
=
\langle Q(z)\bar y,u\rangle.
\end{aligned}
\end{equation}
Hence
\[
\bar\partial_y^w h(z,w)^{-a/2}
=
\frac a2\ell_y(z)h(z,w)^{-a/2}
+
\partial_{Q(z)\bar y}^z h(z,w)^{-a/2}.
\]
At the parameter $a/2$, \eqref{13} becomes
\[h(z,w)^{-a/2}=\sum_{j\ge0}(a/2)_jK^F_{(j)}(z,w).\]
Comparing the terms of degree $m+1$ in $z$ and degree $m$ in
$\bar w$ proves \eqref{1001} near $(0,0)$, and hence everywhere
because both sides are polynomials. Setting $w=\xi$ and using
Lemma~\ref{16} (and $K^F_{(0)}=1$ when $m=0$) gives \eqref{1002}.
As a function of $z$, the right-hand side of \eqref{1002} lies in
$\mathcal P_{(m+1)}$.
\end{proof}

\begin{lemma}\label{101}
Let $m\in\mathbb N_0$, $p\in\mathcal P_{(m)}$, $s\in S$, $v\in V$,
and $x\in V_1(s)$. Then
\begin{equation}\label{102}
\int_{\mathcal I_s}p(e)\,d\mu_s(e)
=
\frac{(a/2)_m}{(ar/2)_m}p(s).
\end{equation}
\begin{equation}\label{103}
\int_{\mathcal I_s}
p(e)\overline{\ell_v(e)}\,d\mu_s(e)
=
\frac{(a/2)_m}{(ar/2)_{m+1}}
\left[
\frac a2p(s)\overline{\ell_v(s)}
+
\partial_{\operatorname{pr}_2(s)v}p(s)
\right].
\end{equation}
\begin{equation}\label{104}
\int_{\mathcal I_s}
\partial_{D(e,\bar s)x}p(e)\,d\mu_s(e)
=
\frac{(a/2)_m}{(ar/2)_m}\partial_xp(s).
\end{equation}
\end{lemma}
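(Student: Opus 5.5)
The plan is to reduce all three identities to the single fact that $K_s$-invariant vectors in $\mathcal P_{(m)}$ form a line (see \eqref{12}), and to get \eqref{103} and \eqref{104} from \eqref{102} by an algebraic identity and by differentiation, respectively.

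\emph{Step 1: formula \eqref{102}.} By the reproducing property, $p(e)=\langle p,K^F_{(m)}(\cdot,e)\rangle_F$, so
\[
\int_{\mathcal I_s}p\,d\mu_s=\langle p,\Phi_s\rangle_F,
\qquad
\Phi_s(z)=\int_{\mathcal I_s}K^F_{(m)}(z,e)\,d\mu_s(e).
\]
Since $\mu_s$ is $K_s$-invariant, $\Phi_s$ is a $K_s$-fixed vector in $\mathcal P_{(m)}$. By \eqref{12} (transported from $e_0$ to $s$ by $K$), $\Phi_s=C\,K^F_{(m)}(\cdot,s)$ for some constant $C$.

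To find $C$, evaluate at $z=s$. Every $e\in\mathcal I_s$ has rank one and $\langle s,e\rangle=\|e\|^2=1$, because $s-e$ is orthogonal to $e$. Lemma~\ref{16} then gives $\Phi_s(s)=1/m!$.

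For $K^F_{(m)}(s,s)$, apply \eqref{13} with $u=a/2$. There $(a/2)_\alpha=0$ whenever $\alpha_2\ge1$, so only the types $(j)$ survive. Using \eqref{9} at $z=w=ts$,
\[
(1-t^2)^{-ar/2}=\sum_j\left(\frac a2\right)_jK^F_{(j)}(s,s)\,t^{2j}.
\]
Comparing coefficients gives
\[
K^F_{(m)}(s,s)=\frac{(ar/2)_m}{m!\,(a/2)_m},
\qquad\text{hence}\qquad
C=\frac{(a/2)_m}{(ar/2)_m}.
\]
This proves \eqref{102}.

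\emph{Step 2: formula \eqref{104}, by differentiating \eqref{102} along $K$.} For $x\in V_1(s)$, put $X=D(x,\bar s)-D(s,\bar x)\in\mathfrak k$ and $k_t=\exp(tX)$. Since $\mathcal I_{k_ts}=k_t\mathcal I_s$ and $\mu_{k_ts}=(k_t)_*\mu_s$, applying \eqref{102} at $k_ts$ gives
\[
\int_{\mathcal I_s}p(k_te)\,d\mu_s(e)=\frac{(a/2)_m}{(ar/2)_m}\,p(k_ts).
\]
Differentiate at $t=0$; holomorphy of $p$ makes the directional derivatives complex derivatives $\partial_{Xe}p$ and $\partial_{Xs}p$.

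The Peirce rules then compute the two tangent vectors. For $e\le s$ one has $e\in V_2(s)$, so $\{s,\bar x,e\}\in V_3(s)=0$. The outer symmetry of the triple product gives $D(x,\bar s)e=D(e,\bar s)x$, hence $Xe=D(e,\bar s)x$. Likewise $D(x,\bar s)s=D(s,\bar s)x=x$ and $Q(s)\bar x=0$, hence $Xs=x$. Substituting these two values yields \eqref{104}.

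\emph{Step 3: formula \eqref{103}, the main obstacle.} First, both sides depend only on $v_2=\operatorname{pr}_2(s)v$, because $\mathcal I_s\subset V_2(s)\perp V_1(s)$; so I assume $v\in V_2(s)$.

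The key observation concerns a minimal tripotent $e$. Its Peirce space $V_2(e)$ is $\mathbb Ce$, so $Q(e)\bar v=\overline{\ell_v(e)}\,e$. Hence, for $\xi$ of rank one and then by linearity for all $p\in\mathcal P_{(m)}$,
\[
\partial_{Q(e)\bar v}\,p(e)=m\,p(e)\overline{\ell_v(e)}.
\]
Evaluating the $\mathcal P_{(m+1)}$-polynomial $q_y$ of \eqref{1002} at $e$ therefore gives
\[
q_y(e)=\frac a2\ell_y(e)p(e)+m\,p(e)\overline{\ell_y(e)}.
\]

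Next I would use that $e$ is a self-adjoint idempotent of the Jordan algebra $V_2(s)$, so $Q(s)\bar e=e$. Together with \eqref{106}, this gives $\overline{\ell_v(e)}=\ell_u(e)$ with $u=Q(s)\bar v$, and symmetrically $\overline{\ell_u(e)}=\ell_v(e)$.

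Taking $y=v$ and $y=u$ produces a $2\times2$ linear system for the unknowns $\int\ell_vp\,d\mu_s$ and $\int\ell_up\,d\mu_s$. Its right-hand sides $\int q_v\,d\mu_s$ and $\int q_u\,d\mu_s$ are known from \eqref{102} in degree $m+1$. The values $q_y(s)$ are then rewritten with $Q(s)\bar v=\operatorname{pr}_2(s)v$ to match the bracket in \eqref{103}.

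The difficulty is that the determinant $a^2/4-m^2$ vanishes when $a=2m$. At that value I would switch to a derivative argument modeled on Step 2: differentiate \eqref{102} in degree $m+1$ along $X=i\bigl(D(v,\bar s)+D(s,\bar v)\bigr)$, which moves $s$ within $V_2(s)$. Failing that, I would check the identity directly for $p=\ell_\xi^m$ by restricting to the Jordan algebra $V_2(s)$, where both sides depend polynomially on the multiplicity and can be compared generically.
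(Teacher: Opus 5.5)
Your Steps 1 and 2 are correct. Step 1 is the paper's argument, phrased through the $K_s$-fixed line in $\mathcal P_{(m)}$. Step 2 differentiates \eqref{102} along $\exp(tX)$, whereas the paper applies \eqref{102} directly to the polynomial $z\mapsto\partial_{D(x,\bar s)z}p(z)\in\mathcal P_{(m)}$; the two are equivalent.

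The gap is in Step 3, and it comes from a conjugation error. For a minimal tripotent $e$ one has $Q(e)\bar v=\ell_v(e)\,e$, not $\overline{\ell_v(e)}\,e$. Both $Q(e)\bar v$ and $\ell_v(e)=\langle e,v\rangle$ are conjugate-linear in $v$, whereas $\overline{\ell_v(e)}$ is linear, and $v=e$ gives $Q(e)\bar e=e$. Hence, by Euler's identity, $\partial_{Q(e)\bar y}p(e)=m\,\ell_y(e)p(e)$, and
\[
q_y(e)=\Bigl(\frac a2+m\Bigr)\ell_y(e)\,p(e)\qquad\text{for every }y\in V .
\]
Your formula mixes $\ell_y(e)$ and $\overline{\ell_y(e)}$, which cannot be right, since $q_y$ is conjugate-linear in $y$. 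Consequently your $2\times2$ system has the wrong coefficients. It is not merely singular at $a=2m$, a value that does occur (e.g.\ $a=2$, $m=1$) and for which your fallback is only a sketch. For $a\ne2m$, the true pair of integrals solves your system only if $m\int p\,\ell_v\,d\mu_s=m\int p\,\overline{\ell_v}\,d\mu_s$. This fails already for $v=is$ as soon as $m\ge1$ and $p(s)\ne0$.

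The missing observation is that no system is needed. Take only $y=u=Q(s)\bar v$. Since $\ell_u(e)=\overline{\ell_v(e)}$ on $\mathcal I_s$, the corrected formula gives $q_u(e)=(\frac a2+m)\,p(e)\overline{\ell_v(e)}$. Integrating over $\mathcal I_s$ and applying \eqref{102} in degree $m+1$ yields
\[
\Bigl(\frac a2+m\Bigr)\int_{\mathcal I_s}p(e)\overline{\ell_v(e)}\,d\mu_s(e)
=\frac{(a/2)_{m+1}}{(ar/2)_{m+1}}\,q_u(s).
\]
Here $q_u(s)=\frac a2p(s)\overline{\ell_v(s)}+\partial_{\operatorname{pr}_2(s)v}p(s)$, because $\ell_u(s)=\overline{\ell_v(s)}$ and $Q(s)\bar u=\operatorname{pr}_2(s)v$. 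As $a>0$, dividing by $\frac a2+m$ gives \eqref{103} for every $m$, with no exceptional case. This is exactly the paper's proof.
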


\begin{proof}
For $m=0$, \eqref{102} follows from the normalization of $\mu_s$.
To prove \eqref{102} for $m\ge1$, fix $e_0\in\mathcal I_s$. By \eqref{92}, one has, for $k\in K$,
\[
\begin{aligned}
\int_{\mathcal I_s}(k\cdot p)(e)\,d\mu_s(e)
=
\int_{K_s}p(k^{-1}he_0)\,dh
=
\int_{K_{k^{-1}s}}
p(hk^{-1}e_0)\,dh
=
\int_{\mathcal I_{k^{-1}s}}
p(e)\,d\mu_{k^{-1}s}(e).
\end{aligned}
\]
Here we used \(K_{k^{-1}s}=k^{-1}K_sk\)
and the invariance of normalized Haar measure under conjugation.

By \eqref{12} and Frobenius reciprocity, the $K$-type
$\mathcal P_{(m)}$ occurs with multiplicity one in $L^2(S)$.
The restriction map
$
p\longmapsto p|_S
$
is a nonzero $K$-equivariant map from $\mathcal P_{(m)}$ into
$L^2(S)$. We have just shown that
$
p\longmapsto
\left[
s\longmapsto
\int_{\mathcal I_s}p(e)\,d\mu_s(e)
\right]
$
is another $K$-equivariant map from $\mathcal P_{(m)}$ into
$L^2(S)$. By multiplicity one, there is a constant $c_m$ such that
\begin{equation}\label{105}
\int_{\mathcal I_s}p(e)\,d\mu_s(e)
=
c_mp(s),
\qquad
p\in\mathcal P_{(m)},\ s\in S.
\end{equation}

Take \(p=K^F_{(m)}(\,\cdot\,,s)\).
If $e\in\mathcal I_s$, then $s-e$ is orthogonal to $e$, and hence
$
\langle e,s\rangle=1.
$
By the Hermitian symmetry of $K^F_{(m)}$ and Lemma~\ref{16}, one has
 \[\int_{\mathcal I_s} K^F_{(m)}(e,s)\,d\mu_s(e) =\int_{\mathcal I_s} \overline{K^F_{(m)}(s,e)}\,d\mu_s(e)=\int_{\mathcal I_s} \frac1{m!}\,d\mu_s(e)= \frac1{m!}.\]

On the other hand, for $0<t<1$, \(h(\lambda s,ts)^{-a/2} = (1-\lambda t)^{-ar/2}\).
Since only one-row terms occur in \eqref{13} at the parameter $a/2$,
comparison of the coefficients of $(\lambda t)^m$ gives \[K^F_{(m)}(s,s) = \frac{(ar/2)_m}{(a/2)_m\,m!}.\]
Substituting these two values into \eqref{105} gives \(c_m = \frac{(a/2)_m}{(ar/2)_m}\).
This proves \eqref{102}.

We next prove \eqref{103}. For $m\ge1$, Lemma~\ref{16} reduces
this to $p=\ell_\xi^m$ with $\xi$ of rank one. For $m=0$, it suffices
to take $p=1=\ell_\xi^0$. Put $y=Q(s)\bar v$. By Lemma~\ref{1000},
the polynomial on the left of \eqref{1002} belongs to
$\mathcal P_{(m+1)}$.

Now let $e\in\mathcal I_s$.
For a maximal tripotent $s$, the Peirce rules for $Q(s)$ give \[Q(s)\overline{Q(s)\bar v}=\operatorname{pr}_2(s)v.\]
Moreover $e\in V_2(s)$ and $Q(s)\bar e=e$.  Using the adjoint relation
for $Q(s)$, or equivalently \eqref{106}, we therefore obtain
\[
\ell_y(e)=\overline{\ell_v(e)},
\qquad
\ell_y(s)=\overline{\ell_v(s)},
\qquad
Q(s)\bar y=\operatorname{pr}_2(s)v.
\]
See also \cite[Section~5.3]{Clerc}.
Since $e$ is minimal, one has $$Q(e)\bar{y}\in V_2(e) ,\  V_2(e)=\mathbb{C}e \ \mbox{and}\ Q(e)\bar{e}=e,$$ hence by  \eqref{106},  \(Q(e)\bar y=\ell_y(e)e\).
Therefore
\[
\begin{aligned}
\partial_{Q(e)\bar y}\ell_\xi^m(e)
&=
\ell_y(e)\partial_e\ell_\xi^m(e)=
m\ell_\xi(e)^m\overline{\ell_v(e)}.
\end{aligned}
\]
The polynomial on the left of \eqref{1002} therefore takes the value
\(\left(\frac a2+m\right)\ell_\xi(e)^m\overline{\ell_v(e)}\)
at $e$. At $s$, its value is
\[
\frac a2
\ell_\xi(s)^m\overline{\ell_v(s)}
+
\partial_{\operatorname{pr}_2(s)v}\ell_\xi^m(s).
\]
Applying \eqref{102} in degree $m+1$ to this polynomial gives
\[
\begin{aligned}
\left(\frac a2+m\right)
\int_{\mathcal I_s}
\ell_\xi(e)^m\overline{\ell_v(e)}\,d\mu_s(e)
={}&
\frac{(a/2)_{m+1}}{(ar/2)_{m+1}}
\left[
\frac a2
\ell_\xi(s)^m\overline{\ell_v(s)}
+
\partial_{\operatorname{pr}_2(s)v}\ell_\xi^m(s)
\right].
\end{aligned}
\]
Dividing by $a/2+m$ gives
\[
\int_{\mathcal I_s}
\ell_\xi(e)^m\overline{\ell_v(e)}\,d\mu_s(e)
=
\frac{(a/2)_m}{(ar/2)_{m+1}}
\left[
\frac a2
\ell_\xi(s)^m\overline{\ell_v(s)}
+
\partial_{\operatorname{pr}_2(s)v}\ell_\xi^m(s)
\right].
\]
For $m\ge1$, the rank-one powers $\ell_\xi^m$ span
$\mathcal P_{(m)}$ by Lemma~\ref{16}; for $m=0$, they span the constants.
Hence \eqref{103} follows by linearity.

Finally, the Harish--Chandra decomposition
\(\mathfrak g_{\mathbb C}=V\oplus\mathfrak k_{\mathbb C}\oplus\bar V\)
and the bracket relation \([x,\bar y]=D(x,\bar y)\) in
\cite[Section~2, formulas~(1)--(2)]{KZ} show that
\(D(x,\bar s)\in\mathfrak k_{\mathbb C}\).
Since $\mathcal P_{(m)}$ is $K$-invariant, it is also invariant under
the derived action of $\mathfrak k_{\mathbb C}$. More explicitly, if $A\in\mathfrak k$, then \(p(e^{tA}z)\in\mathcal P_{(m)}\)
for every $t$, and differentiation at $t=0$ gives \(z\longmapsto\partial_{Az}p(z)\in\mathcal P_{(m)}\).
By complex linearity, the same conclusion holds for
$A\in\mathfrak k_{\mathbb C}$. Hence \(z\longmapsto \partial_{D(x,\bar s)z}p(z) \in\mathcal P_{(m)}\).

By the outer symmetry of the Jordan triple product, \[D(x,\bar s)s = D(s,\bar s)x = x.\]
We may therefore apply \eqref{102} to the polynomial \(z\longmapsto \partial_{D(x,\bar s)z}p(z)\).
This gives
\[
\begin{aligned}
\int_{\mathcal I_s}
\partial_{D(e,\bar s)x}p(e)\,d\mu_s(e)
&=
\int_{\mathcal I_s}
\partial_{D(x,\bar s)e}p(e)\,d\mu_s(e)=
\frac{(a/2)_m}{(ar/2)_m}
\partial_{D(x,\bar s)s}p(s)=
\frac{(a/2)_m}{(ar/2)_m}
\partial_xp(s).
\end{aligned}
\]
This proves \eqref{104}.
\end{proof}

The preceding averaging formulas lead to the following operator.
Fix an orthonormal basis $v_1,\ldots,v_n$ of $V$ and write \(z_j=\langle z,v_j\rangle\).
Since $\bar z_1,\ldots,\bar z_n$ form a basis of
$\overline{\mathcal P_{(1)}}$, every $U\in\mathcal E_q$ has a unique
expansion \(U(z)=\sum_{j=1}^np_j(z)\bar z_j,\) where \( p_j\in\mathcal P_{(q)}\).
Define
\[
(\mathsf TU)(z)
=
\frac{ab}{2}U(z)
+
\left(b+\frac{ar}{2}+q\right)
\sum_{j=1}^n
\partial_{D(z,\bar z)v_j}p_j(z).
\]

\begin{lemma}\label{107}
The operator $\mathsf T$ maps $\mathcal E_q$ into itself and is
invertible.
\end{lemma}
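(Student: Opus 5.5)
The plan has two parts. First I will rewrite $\mathsf T$ in a manifestly basis-free and $K$-equivariant form, which also shows that it preserves $\mathcal E_q$. Then I will prove invertibility by a positivity argument with respect to the Fischer-type inner product \eqref{91a}.

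\emph{Step 1: $\mathsf T$ preserves $\mathcal E_q$.} By outer symmetry, $D(z,\bar z)v_j=D(v_j,\bar z)z$. Expanding $\bar z=\sum_k\bar z_k\bar v_k$ antilinearly gives
\[
\sum_{j}\partial_{D(z,\bar z)v_j}p_j(z)=\sum_{j,k}\bar z_k\,\bigl(\partial_{D(v_j,\bar v_k)z}p_j\bigr)(z).
\]
As in the last step of the proof of Lemma~\ref{101}, each $D(v_j,\bar v_k)$ lies in $\mathfrak k_{\mathbb C}$. Hence $z\mapsto\partial_{D(v_j,\bar v_k)z}p_j(z)$ lies in $\mathcal P_{(q)}$, so $\mathsf TU\in\mathcal E_q$. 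Uniqueness of the expansion $U=\sum p_j\bar z_j$ makes $\mathsf T$ well defined and linear. Writing $U$ as the tensor $\sum_j p_j\otimes \bar v_j$, the second term is
\[
N:=\sum_{j,k}\rho(D(v_j,\bar v_k))\otimes E_{kj},
\]
where $\rho$ is the derived action on $\mathcal P_{(q)}$. This expression is independent of the orthonormal basis. Since $K$ preserves $D$ in the sense that $kD(x,\bar y)k^{-1}=D(kx,\overline{ky})$, the operator $\mathsf T$ commutes with $K$.

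\emph{Step 2: invertibility.} Since $\mathcal E_q$ is finite-dimensional, it suffices to show that $\mathsf T$ is injective. I would show that $N$ is positive semidefinite for the inner product \eqref{91a}. Because $a,b>0$, this gives
\[
\langle\mathsf TU,U\rangle_F\ \ge\ \tfrac{ab}{2}\|U\|_F^2,
\]
and injectivity follows. For the positivity, use two facts:
\begin{itemize}
\item The Fischer adjoint of $\rho(D(x,\bar y))$ on $\mathcal P_{(q)}$ is $\rho(D(y,\bar x))$. This follows from $D(x,\bar y)^*=D(y,\bar x)$ for the trace form, together with the $K$-invariance of the Fischer product.
\item Taking $U=\sum_j p_j\bar z_j$, one gets
\[
\langle NU,U\rangle_F=\sum_{j,k}\bigl\langle \rho(D(v_j,\bar v_k))p_j,p_k\bigr\rangle_F .
\]
\end{itemize}
Using $\rho(A)p(z)=\partial_{Az}p(z)$ and Fischer adjointness ($\langle z_ju,p\rangle_F=\langle u,\partial_jp\rangle_F$), I would rewrite this sum as $\sum_{i,l}\|\sum_j(\cdots)\|_F^2$. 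The natural tool is the identity $D(z,\bar v)w=\{z,\bar v,w\}$ combined with the Bergman-operator positivity $\langle D(x,\bar x)w,w\rangle\ge0$. The hoped-for form is a sum of squares of the Fischer norms of the polynomials $\sum_j \partial_{v_j}$-contractions.

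\emph{Main obstacle and fallback.} I expect Step~2 to be the main obstacle. Pointwise positivity of $D(x,\bar x)$ is not obviously enough, because what is needed is a matrix-valued ("complete") positivity of $(j,k)\mapsto\rho(D(v_j,\bar v_k))$. If the sum-of-squares rewriting fails, I would switch to a representation-theoretic computation. Since $N$ is a Casimir-type cross term, it acts on each $K$-isotypic component of $\mathcal E_q\cong\mathcal P_{(q)}\otimes\overline{\mathcal P_{(1)}}$ as a scalar. That scalar is expressible as $\tfrac12(\text{Casimir of the component}-\text{Casimir of }\mathcal P_{(q)}-\text{Casimir of }\overline{\mathcal P_{(1)}})$. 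On the piece mapping onto $\mathcal P_{(q-1)}$ (cf.\ \eqref{91}), the scalar can be computed directly by testing on $U=\ell_\xi^q\overline{\ell_v}$ and using Lemma~\ref{1000}. The remaining components would be treated in the same way, and I would then check that no eigenvalue equals $-\tfrac{ab}{2}/(b+\tfrac{ar}{2}+q)$.
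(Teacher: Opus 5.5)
Your Step~1 is exactly the paper's argument, and the setup of Step~2 is correct. For the inner product \eqref{91a} one does have $\langle NU,U\rangle_F=\sum_{j,k}\langle\rho(D(v_j,\bar v_k))p_j,p_k\rangle_F$. The adjoint relation you quote also holds: for any linear $A$, the map $p\mapsto\partial_{Az}p$ has Fischer adjoint $p\mapsto\partial_{A^*z}p$.

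The genuine gap is that the whole content of the lemma is the positivity of this sum, and the proposal does not prove it. As you note yourself, $\langle D(x,\bar x)w,w\rangle\ge0$ only handles tuples of the form $p_j=c_jp$, i.e.\ one operator $\rho(D(x,\bar x))$ at a time. It says nothing about the block matrix $\bigl(\rho(D(v_j,\bar v_k))\bigr)_{j,k}$. The ``hoped-for'' sum of squares is never written down. The fallback is only a programme: it would need the full $K$-decomposition of $\mathcal P_{(q)}\otimes\overline{\mathcal P_{(1)}}$ on a non-tube domain, and the correctly normalized Casimir eigenvalue (including the centre of $\mathfrak k$) on every component. None of this is done, so invertibility is not established.

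The missing idea is to factor $D(x,\bar y)$ through $\mathfrak k_{\mathbb C}$ using a Hermitian orthonormal basis.
\begin{itemize}
\item Let $\mathcal B$ be the positive multiple of the Killing form with $\mathcal B(u,\bar v)=\langle u,v\rangle$ on $V$.
\item Let $(Z_\gamma)$ be a $\mathcal B$-orthonormal basis of $i\mathfrak k$, where $\mathcal B$ is positive definite. Each $Z_\gamma$ is Hermitian on $V$, and $(Z_\gamma)$ is also a $\mathcal B$-orthonormal complex basis of $\mathfrak k_{\mathbb C}$.
\item From $[x,\bar y]=D(x,\bar y)$ and invariance, $\mathcal B(D(x,\bar y),Z_\gamma)=\mathcal B([Z_\gamma,x],\bar y)=\langle Z_\gamma x,y\rangle$. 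Hence $D(x,\bar y)=\sum_\gamma\langle Z_\gamma x,y\rangle Z_\gamma$.
\item By outer symmetry, as in your Step~1, $D(v_j,\bar v_k)z=D(z,\bar v_k)v_j=\sum_\gamma\ell_{Z_\gamma v_k}(z)\,Z_\gamma v_j$. Hence $\rho(D(v_j,\bar v_k))p_j=\sum_\gamma\ell_{Z_\gamma v_k}\,\partial_{Z_\gamma v_j}p_j$.
\end{itemize}
Multiplication by $\ell_\xi$ has Fischer adjoint $\partial_\xi$, so
\[
\sum_{j,k}\bigl\langle\rho(D(v_j,\bar v_k))p_j,p_k\bigr\rangle_F
=\sum_\gamma\Bigl\|\sum_j\partial_{Z_\gamma v_j}p_j\Bigr\|_F^2 .
\]
This is exactly the paper's identity \eqref{108}, with $\mathsf C_\gamma U=\sum_j\partial_{Z_\gamma v_j}p_j\in\mathcal P_{(q-1)}$. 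The paper phrases it through $D(z,\bar z)=\sum_\gamma\langle Z_\gamma z,z\rangle Z_\gamma$ and the pairing \eqref{107b}. With it, $\langle\mathsf TU,U\rangle_F\ge\frac{ab}{2}\|U\|_F^2$ follows at once, and no representation-theoretic eigenvalue computation is needed.
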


\begin{proof}
To see that $\mathsf T$ maps $\mathcal E_q$ into itself, let \[U(z)=\sum_{j=1}^np_j(z)\bar z_j, \quad p_j\in\mathcal P_{(q)}.\]
Since
$
\bar z=\sum_{l=1}^n\bar z_l\bar v_l,
$
the outer symmetry gives
\[
D(z,\bar z)v_j
=
\sum_{l=1}^n
\bar z_lD(z,\bar v_l)v_j
=
\sum_{l=1}^n
\bar z_lD(v_j,\bar v_l)z.
\]
Hence
\[
\sum_{j=1}^n
\partial_{D(z,\bar z)v_j}p_j(z)
=
\sum_{l=1}^n
\left(
\sum_{j=1}^n
\partial_{D(v_j,\bar v_l)z}p_j(z)
\right)\bar z_l.
\]
The proof of Lemma~\ref{101} shows that \(z\longmapsto \partial_{D(v_j,\bar v_l)z}p_j(z)\)
belongs to $\mathcal P_{(q)}$. Therefore the coefficient of each
$\bar z_l$ in the preceding expression belongs to
$\mathcal P_{(q)}$, and hence \[\sum_{j=1}^n \partial_{D(z,\bar z)v_j}p_j(z) \in\mathcal E_q.\]
Thus $\mathsf T\mathcal E_q\subset\mathcal E_q$.

It remains to prove invertibility.  Since $K$ acts unitarily on $V$,
$\mathfrak k$ consists of skew-Hermitian endomorphisms.  Moreover,
$D(z,\bar z)\in\mathfrak k_{\mathbb C}$ and
$D(z,\bar z)^*=D(z,\bar z)$.  Writing
$\mathfrak k_{\mathbb C}=\mathfrak k\oplus i\mathfrak k$ as real vector
spaces, the Hermitian condition therefore gives \(D(z,\bar z)\in i\mathfrak k\).

Let $\mathcal B$ be the positive multiple of the complexified Killing form
for which \[\mathcal B(u,\bar v)=\langle u,v\rangle, \quad u,v\in V.\]
Its restriction to the real space $i\mathfrak k$ is positive definite.
Choose a $\mathcal B$-orthonormal basis $(Z_\gamma)$ of $i\mathfrak k$.
Then every $Z_\gamma$ acts Hermitianly on $V$.  Using \([z,\bar w]=D(z,\bar w)\)
and the invariance of $\mathcal B$, we obtain
\[
\begin{aligned}
\mathcal B(D(z,\bar z),Z_\gamma)
&=
\mathcal B([z,\bar z],Z_\gamma)
=
\mathcal B([Z_\gamma,z],\bar z)
=
\langle Z_\gamma z,z\rangle.
\end{aligned}
\]
Therefore
\begin{equation}\label{107a}
D(z,\bar z)
=
\sum_\gamma
\langle Z_\gamma z,z\rangle Z_\gamma.
\end{equation}

For fixed $\gamma$, define
\[
\mathsf C_\gamma U
=
\sum_{j=1}^n
\partial_{Z_\gamma v_j}p_j,
\qquad
U=\sum_{j=1}^np_j\bar z_j.
\]
By Lemma~\ref{16}, \(\mathsf C_\gamma U\in\mathcal P_{(q-1)}\).
If \(Z_\gamma v_j = \sum_l c^\gamma_{lj}v_l\),
then, because $Z_\gamma$ is Hermitian, \(c^\gamma_{jl}=\overline{c^\gamma_{lj}}\).
For $h\in\mathcal P_{(q-1)}$, Fischer adjointness in the two variables gives
\begin{equation}\label{107b}
\left\langle
\langle Z_\gamma z,z\rangle h,
U
\right\rangle_F
=
\langle h,\mathsf C_\gamma U\rangle_F.
\end{equation}
Indeed, \(\langle Z_\gamma z,z\rangle = \sum_{l,j}c^\gamma_{lj}z_j\bar z_l\),
so the left-hand side of \eqref{107b} is \(\sum_{l,j}c^\gamma_{lj} \langle h,\partial_jp_l\rangle_F\).
On the other hand, after interchanging $j$ and $l$,
\[
\mathsf C_\gamma U
=
\sum_{l,j}c^\gamma_{jl}\partial_jp_l
=
\sum_{l,j}\overline{c^\gamma_{lj}}\partial_jp_l.
\]
Since the Fischer inner product is conjugate-linear in its second variable,
this gives exactly \eqref{107b}.
Notice that we only use the pairing identity \eqref{107b}; no assertion is
needed that multiplication by $\langle Z_\gamma z,z\rangle$ preserves
$\mathcal E_q$.

By \eqref{107a},
\[
\sum_{j=1}^n
\partial_{D(z,\bar z)v_j}p_j
=
\sum_\gamma
\langle Z_\gamma z,z\rangle
\mathsf C_\gamma U.
\]
Taking the Fischer inner product with $U$ and applying \eqref{107b} with
$h=\mathsf C_\gamma U$ gives
\[
\left\langle
\sum_{j=1}^n
\partial_{D(z,\bar z)v_j}p_j,
U
\right\rangle_F
=
\sum_\gamma
\|\mathsf C_\gamma U\|_F^2.
\]
Consequently
\begin{equation}\label{108}
\langle\mathsf TU,U\rangle_F
=
\frac{ab}{2}\|U\|_F^2
+
\left(b+\frac{ar}{2}+q\right)
\sum_\gamma
\|\mathsf C_\gamma U\|_F^2.
\end{equation}
Since $a>0$ and $b>0$, the right-hand side is positive whenever
$U\ne0$.  Thus $\mathsf T$ is injective.  Since $\mathcal E_q$ is
finite-dimensional, $\mathsf T$ is invertible.
\end{proof}

\begin{lemma}\label{109}
For $\mathcal A_q$ defined in \eqref{95}, \(\operatorname{Ran}\mathcal A_q = \mathcal M_q\).
\end{lemma}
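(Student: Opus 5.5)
The plan is to compute $\mathcal A_q$ explicitly on the spanning elements $\ell_\xi^q\overline{\ell_v}$ of $\mathcal E_q$, using Lemma~\ref{97} together with the averaging formulas of Lemma~\ref{101}. This will show that $\mathcal A_q U$ agrees with a nonzero multiple of $(\mathsf TU)|_S$ modulo $\mathcal P_{(q-1)}|_S$. Separately, a Hardy-space orthogonality argument based on Lemma~\ref{93} will show that $\mathcal A_qU\perp\mathcal P_{(q-1)}|_S$. The invertibility of $\mathsf T$ (Lemma~\ref{107}) and the decomposition \eqref{91} then give the result.

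\emph{Step 1: the explicit formula.} Take $U=\ell_\xi^q\overline{\ell_v}$ with $\operatorname{rank}\xi=1$, and put $p=\ell_\xi^q$. Insert \eqref{99} into \eqref{95}. Integrate the first term with \eqref{103} at $m=q$, and the second with \eqref{104} at $m=q$ and $x=\operatorname{pr}_1(s)v\in V_1(s)$. Writing $\kappa=(a/2)_q/\bigl((b+q)(ar/2)_{q+1}\bigr)$, I expect
\[
\mathcal A_qU(s)=\kappa\Bigl[\tfrac{ab}{2}U(s)+b\,\partial_{\operatorname{pr}_2(s)v}p(s)-\bigl(\tfrac{ar}{2}+q\bigr)\partial_{\operatorname{pr}_1(s)v}p(s)\Bigr].
\]
For maximal $s$ with $V_0(s)=0$ we have $D(s,\bar s)v=2\operatorname{pr}_2(s)v+\operatorname{pr}_1(s)v$. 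Substituting $\operatorname{pr}_2 v=v-\operatorname{pr}_1v$ and $\operatorname{pr}_1v=2v-D(s,\bar s)v$ should then give
\[
\mathcal A_qU=\kappa\Bigl[(\mathsf TU)|_S-(b+ar+2q)\,\partial_vp|_S\Bigr].
\]
Here $\mathsf TU$ is evaluated at $z=s$, and for $U=p\overline{\ell_v}$ its definition reads $\tfrac{ab}2U+(b+\tfrac{ar}2+q)\partial_{D(z,\bar z)v}p$. By linearity the same identity holds for all $U\in\mathcal E_q$, with $\partial_vp$ replaced by $\sum_j\partial_{v_j}p_j\in\mathcal P_{(q-1)}$. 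Thus $\mathcal A_qU\equiv\kappa(\mathsf TU)|_S$ modulo $\mathcal P_{(q-1)}|_S$.

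\emph{Step 2: $\mathcal A_qU\perp\mathcal P_{(q-1)}|_S$.} Fix $p'\in\mathcal P_{(q-1)}$ and apply Lemma~\ref{93} to
\[
F(s,e)=R_{s-e}(U|_S)(e)\,\overline{p'(s)} .
\]
Continuity of $F$ on $X$ follows from \eqref{98}, which is polynomial in the data. Lemma~\ref{93} turns $\langle\mathcal A_qU,p'\rangle_S$ into
\[
\int_K\int_{\partial\Omega_0(c_0)}R_{kc_0}(U|_S)(k\omega)\,\overline{p'(k(c_0+\omega))}\,d\omega\,dk .
\]
Because $K$ preserves the Jordan structure and the harmonic projections, we have $R_{kc_0}(U|_S)(k\omega)=R_{c_0}((k^{-1}\cdot U)|_S)(\omega)$. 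For fixed $k$ the inner integral therefore pairs an element of $H^{q,1}(V_0(c_0))$ with the restriction of the holomorphic polynomial $w\mapsto p'(k(c_0+w))$. That restriction lies in $\bigoplus_{j\le q-1}H^{j,0}$, so the inner integral vanishes. Hence $\operatorname{Ran}\mathcal A_q\subset\mathcal E_q|_S\ominus\mathcal P_{(q-1)}|_S=\mathcal M_q$ by \eqref{91}.

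\emph{Step 3: surjectivity.} Let $\Pi_{\mathcal M}=I-\Pi_H$ denote the orthogonal projection of $\mathcal E_q|_S$ onto $\mathcal M_q$. By Step 2, $\mathcal A_qU=\Pi_{\mathcal M}\mathcal A_qU$, and by Step 1 this equals $\kappa\,\Pi_{\mathcal M}(\mathsf TU)|_S$. By Lemma~\ref{107}, $\mathsf T$ is onto $\mathcal E_q$, so $(\mathsf TU)|_S$ runs over all of $\mathcal E_q|_S$, and $\Pi_{\mathcal M}$ maps this onto $\mathcal M_q$. Since $\kappa\ne0$, we get $\operatorname{Ran}\mathcal A_q=\mathcal M_q$.

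The main obstacle is Step 1. One must track the bookkeeping between $\operatorname{pr}_W v$, $\operatorname{pr}_1(s)v$ and $D(e,\bar s)$ correctly, and check that the constants from \eqref{103} and \eqref{104} combine into exactly the coefficients $\tfrac{ab}2$ and $b+\tfrac{ar}2+q$ of $\mathsf T$, up to a $\mathcal P_{(q-1)}$ remainder. I would first verify this on the single generator above and then extend by linearity. In Step 2 the delicate point is the equivariance $R_{kc}(h)(k\omega)=R_c(h\circ k)(\omega)$, which I would check directly from \eqref{98}.
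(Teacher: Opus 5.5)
Your proposal is correct and follows the paper's proof essentially step for step. It uses the same evaluation of $\mathcal A_q$ on $\ell_\xi^q\overline{\ell_v}$ via \eqref{99}, \eqref{103}, \eqref{104}, leading to \eqref{110}--\eqref{111}, the same orthogonality to $\mathcal P_{(q-1)}|_S$ via Lemma~\ref{93}, and the same conclusion from the invertibility of $\mathsf T$. The only difference is that the paper first proves the general pairing identity \eqref{112} for arbitrary $g\in C(S)$, which it reuses to finish Proposition~\ref{84}, and then specializes to $g\in\mathcal P_{(q-1)}$, whereas you verify only that special case directly.
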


\begin{proof}
We first consider \(U = \ell_\xi^q\overline{\ell_v}\), where
\(\xi,v\in V\) and \(\operatorname{rank}\xi=1\).
By \eqref{99}, \eqref{103}, and \eqref{104}, with \(x=\operatorname{pr}_1(s)v\),
we have
\[
\begin{aligned}
(\mathcal A_qU)(s)
=
\frac{(a/2)_q}{(b+q)(ar/2)_{q+1}}
\Bigg\{
b\left[
\frac a2\ell_\xi(s)^q\overline{\ell_v(s)}
+
\partial_{\operatorname{pr}_2(s)v}\ell_\xi^q(s)
\right]
-
\left(\frac{ar}{2}+q\right)
\partial_{\operatorname{pr}_1(s)v}\ell_\xi^q(s)
\Bigg\}.
\end{aligned}
\]
Since $s$ is maximal, \[V = V_2(s)\oplus V_1(s),\ 
\mbox{and}\ D(s,\bar s) = 2\operatorname{pr}_2(s) + \operatorname{pr}_1(s).\]
Therefore
\[
\operatorname{pr}_2(s)
=
D(s,\bar s)-I,
\qquad
\operatorname{pr}_1(s)
=
2I-D(s,\bar s),
\]
and hence
\begin{align}
(\mathcal A_qU)(s)
={}&
\frac{(a/2)_q}{(b+q)(ar/2)_{q+1}}
\Bigg[
\frac{ab}{2}
\ell_\xi(s)^q\overline{\ell_v(s)}
+
\left(b+\frac{ar}{2}+q\right)
\partial_{D(s,\bar s)v}\ell_\xi^q(s)
\notag\\
&\hspace{40mm}
-
(b+ar+2q)\partial_v\ell_\xi^q(s)
\Bigg].
\label{110}
\end{align}

Write \(v=\sum_{j=1}^n a_jv_j, \quad a_j=\langle v,v_j\rangle\).
Then \(\overline{\ell_v(z)} = \sum_{j=1}^n a_j\bar z_j\).
By the definition of $\mathsf T$, the first two terms in the brackets
in \eqref{110} are $(\mathsf TU)(s)$. Also, \[\partial_v\ell_\xi^q = q\langle v,\xi\rangle\ell_\xi^{q-1} \in \mathcal P_{(q-1)}.\]
Thus
\[
\mathcal A_qU
-
\frac{(a/2)_q}{(b+q)(ar/2)_{q+1}}
(\mathsf TU)|_S
\in
\mathcal P_{(q-1)}|_S.
\]

By Lemma~\ref{16},
$\mathcal P_{(q)}$ is spanned by the polynomials
$\ell_\xi^q$ with $\operatorname{rank}\xi=1$. Hence
\[
\mathcal E_q
=
\operatorname{span}
\left\{
\ell_\xi^q\overline{\ell_v}:
\xi,v\in V,\ \operatorname{rank}\xi=1
\right\}.
\]
It follows by linearity that
\begin{equation}\label{111}
\mathcal A_qU
-
\frac{(a/2)_q}{(b+q)(ar/2)_{q+1}}
(\mathsf TU)|_S
\in
\mathcal P_{(q-1)}|_S,
\qquad
U\in\mathcal E_q.
\end{equation}

We next record a formula for the pairing with $\mathcal A_qU$.
For $g\in C(S)$ and $U\in\mathcal E_q$,
\begin{equation}\label{112}
\langle g,\mathcal A_qU\rangle_S
=
\int_K
\left\langle
R_{kc_0}g,
R_{kc_0}(U|_S)
\right\rangle
\,dk.
\end{equation}
To prove this, first suppose that \(U=\ell_\xi^q\overline{\ell_v}\) with\( \operatorname{rank}\xi=1\).
By \eqref{99} and \(\operatorname{pr}_1(s)=2I-D(s,\bar s)\),
the function \((s,e) \longmapsto (R_{s-e}(U|_S))(e)\)
is continuous on $X$. The same is therefore true for every
$U\in\mathcal E_q$ by linearity. Hence \eqref{95} and \eqref{94} give
\[
\begin{aligned}
\langle g,\mathcal A_qU\rangle_S
=
\int_K\int_{\partial\Omega_0(c_0)}
&g\bigl(k(c_0+\omega)\bigr)
\overline{(R_{kc_0}(U|_S))(k\omega)}
\,d\omega\,dk.
\end{aligned}
\]
For fixed $k\in K$, the map \(\omega\longmapsto k\omega\)
carries the normalized sphere measure on
$\partial\Omega_0(c_0)$ onto that on
$\partial\Omega_0(kc_0)$. Thus the inner integral is
\[
\int_{\partial\Omega_0(kc_0)}
g(kc_0+e)
\overline{(R_{kc_0}(U|_S))(e)}
\,de.
\]
Since $R_{kc_0}$ is the orthogonal projection onto
$H^{q,1}(V_0(kc_0))$, this is \(\left\langle R_{kc_0}g, R_{kc_0}(U|_S) \right\rangle\).
This proves \eqref{112}.

We now show that
\begin{equation}\label{113}
\mathcal A_qU
\perp
\mathcal P_{(q-1)}|_S,
\qquad
U\in\mathcal E_q.
\end{equation}
Let $h\in\mathcal P_{(q-1)}$. For every tripotent $c$ of rank $r-1$,
the function \(e\longmapsto h(c+e)\)
is a holomorphic polynomial of degree at most $q-1$ on $V_0(c)$.
Its homogeneous components belong to \(H^{m,0}(V_0(c)), \quad 0\le m\le q-1\),
and are orthogonal on the sphere to
$H^{q,1}(V_0(c))$. Hence \(R_ch=0\).
Applying \eqref{112} with $g=h$ gives \(\langle h,\mathcal A_qU\rangle_S=0\),
and \eqref{113} follows.

Since $\mathsf TU\in\mathcal E_q$, \eqref{111} and \eqref{91} imply \(\mathcal A_qU\in\mathcal E_q|_S\).
Now \eqref{91} and \eqref{113} give \(\mathcal A_qU\in\mathcal M_q\).

By Lemma~\ref{21}, the restriction of $\Pi_H$ to
$\mathcal E_q|_S$ is the orthogonal projection onto
$\mathcal P_{(q-1)}|_S$. Taking the $\mathcal M_q$ component in
\eqref{111}, we obtain
\begin{equation}\label{114}
\mathcal A_qU
=
\frac{(a/2)_q}{(b+q)(ar/2)_{q+1}}
(I-\Pi_H)\bigl((\mathsf TU)|_S\bigr).
\end{equation}
The scalar in \eqref{114} is nonzero. Since $\mathsf T$ is invertible, \(\mathsf T(\mathcal E_q) = \mathcal E_q\).
Therefore \eqref{91} and \eqref{114} give \(\operatorname{Ran}\mathcal A_q = (I-\Pi_H)(\mathcal E_q|_S) = \mathcal M_q\).
\end{proof}

\begin{proof}[Completion of the proof of Proposition~\ref{84}]
We are in the case $r>1$. The hypothesis says that $R_cf=0$
for every tripotent $c$ of rank $r-1$. In particular,
$R_{kc_0}f=0$ for every $k\in K$. Hence \eqref{112} gives
\(\langle f,\mathcal A_qU\rangle_S=0\) for every
$U\in\mathcal E_q$. Thus
\(f\perp\operatorname{Ran}\mathcal A_q=\mathcal M_q\),
where the equality is Lemma~\ref{109}.
\end{proof}


\begin{thebibliography}{99}                                                                                               %
\raggedright

\bibitem {ABC}P.~Ahern, J.~Bruna, and M.~C.~Cascante, \emph{$H^{p}$-theory for
generalized $M$-harmonic functions in the unit ball}, Indiana Univ. Math. J.
\textbf{45} (1996), no.~1, 103--135.

\bibitem {BBDHJ}A.~Bonami, D.~Buraczewski, E.~Damek, A.~Hulanicki, and
P.~Jaming, \emph{Maximum boundary regularity of bounded Hua-harmonic functions
on tube domains}, J. Geom. Anal. \textbf{14} (2004), no.~3, 457--486.

\bibitem {BerlineVergne}N.~Berline and M.~Vergne,
\emph{\'Equations de Hua et noyau de Poisson}, in \emph{Non-commutative
harmonic analysis and Lie groups}, Lecture Notes in Math., vol.~880,
Springer, Berlin, 1981, 1--51.

\bibitem {Boussejra03}A.~Boussejra,
\emph{$L^2$-Poisson integral representations of solutions of the Hua system
on the bounded symmetric domain $SU(n,n)/S(U(n)\times U(n))$},
J. Funct. Anal. \textbf{202} (2003), no.~1, 25--43.

\bibitem {Boussejra06}A.~Boussejra,
\emph{$L^p$-Poisson integral representations of solutions of the Hua system
on Hermitian symmetric spaces of tube type},
J. Funct. Anal. \textbf{235} (2006), no.~2, 413--429.

\bibitem {BK07}A.~Boussejra and K.~Koufany,
\emph{Characterization of the Poisson integrals for the non-tube bounded
symmetric domains}, J. Math. Pures Appl. (9) \textbf{87} (2007), no.~4,
438--451.

\bibitem {Boussejra11}A.~Boussejra,
\emph{Boundary behavior of Poisson integrals on boundaries of symmetric
spaces}, J. Lie Theory \textbf{21} (2011), no.~2, 243--261.

\bibitem {BO}A.~Boussejra and N.~Ourchane, \emph{$L^{p}$-Poisson integral
representations for solutions of the generalized Hua system on line bundles
over $SU(n,n)/S(U(n)\times U(n))$}, J. Funct. Anal. \textbf{280} (2021),
no.~7, 108824.

\bibitem {ChenLi}R.-Y.~Chen and S.-Y.~Li, \emph{Graham type theorem on
classical bounded symmetric domains}, Calc. Var. PDE \textbf{58} (2019),
article 81.

\bibitem {Clerc}J.-L.~Clerc, \emph{Geometry of the Shilov boundary of a
bounded symmetric domain}, J. Geom. Symmetry Phys. \textbf{13} (2009), 25--74.

\bibitem {BSW}H.~De Bie, F.~Sommen, and M.~Wutzig,
\emph{Plane wave formulas for spherical, complex and symplectic harmonics},
J. Approx. Theory \textbf{222} (2017), 110--131.

\bibitem {EWO}F.~El Wassouli and D.~Oukacha, \emph{Hua operators on
homogeneous line bundles over non-tube type bounded symmetric domains},
Differential Geom. Appl. \textbf{96} (2024), 102168.

\bibitem {EYZ}M.~Engli\v{s}, E.-H.~Youssfi, and G.~Zhang, \emph{Weighted
Bergman kernels for nearly holomorphic functions on bounded symmetric
domains}, J. Funct. Anal. \textbf{286} (2024), no.~2, 110213.
Formula and page references follow
\href{https://arxiv.org/abs/2303.02256v1}{arXiv:2303.02256v1}.

\bibitem {FK}J.~Faraut and A.~Kor\'anyi, \emph{Function spaces and reproducing
kernels on bounded symmetric domains}, J. Funct. Anal. \textbf{88} (1990),
64--89.

\bibitem {FultonHarris}W.~Fulton and J.~Harris,
\emph{Representation Theory: A First Course},
Graduate Texts in Mathematics, vol.~129, Springer-Verlag,
New York, 1991.

\bibitem {Graham}C.~R.~Graham, \emph{The Dirichlet problem for the Bergman
Laplacian. I}, Comm. Partial Differential Equations \textbf{8} (1983), no.~5,
433--476.
\emph{II}, ibid. \textbf{8} (1983), no.~6, 563--641.

\bibitem {GrahamLee}C.~R.~Graham and J.~M.~Lee,
\emph{Smooth solutions of degenerate Laplacians on strictly pseudoconvex
domains}, Duke Math. J. \textbf{57} (1988), no.~3, 697--720.

\bibitem {Helgason}S.~Helgason, \emph{Differential Geometry, Lie Groups,
and Symmetric Spaces}, Pure and Applied Mathematics, vol.~80,
Academic Press, New York, 1978.

\bibitem {HuaBook}L.-K.~Hua, \emph{Harmonic analysis of functions of
several complex variables in the classical domains}, Translations of
Mathematical Monographs, vol.~6, American Mathematical Society, Providence,
RI, 1963.

\bibitem {HuaLook}L.-K.~Hua and K.~H.~Look, \emph{Theory of harmonic functions
in classical domains}, Scientia Sinica \textbf{8} (1959), no.~10, 1031--1094.

\bibitem {Inoue}T.~Inoue, \emph{Integral formulas for harmonic functions
associated with boundaries of a bounded symmetric domain}, Osaka J. Math.
\textbf{26} (1989), no.~3, 527--534.

\bibitem {JohnsonKoranyi}K.~D.~Johnson and A.~Kor\'anyi,
\emph{The Hua operators on bounded symmetric domains of tube type},
Ann. of Math. (2) \textbf{111} (1980), no.~3, 589--608.

\bibitem {Kaup02}W.~Kaup, \emph{Bounded symmetric domains and derived
geometric structures}, Rend. Mat. Acc. Lincei (9) \textbf{13} (2002),
no.~3--4, 243--257.
\url{https://www.math.uni-tuebingen.de/user/kaup/rom.pdf}.

\bibitem {Koranyi65}A.~Kor\'anyi, \emph{The Poisson integral for generalized
half-planes and bounded symmetric domains}, Ann. of Math. (2) \textbf{82}
(1965), 332--350.

\bibitem {KoranyiMalliavin}A.~Kor\'anyi and P.~Malliavin,
\emph{Poisson formula and compound diffusion associated to an overdetermined
elliptic system on the Siegel halfplane of rank two}, Acta Math. \textbf{134}
(1975), 185--209.

\bibitem {Koranyi}A.~Kor\'anyi, \emph{Poisson transforms for line bundles from
the Shilov boundary to bounded symmetric domains}, in \emph{Lie Groups:
Structure, Actions, and Representations}, Progr. Math., vol.~306,
Birkh\"auser/Springer, New York, 2013, 141--162.

\bibitem {KZ06}K.~Koufany and G.~Zhang, \emph{Hua operators and Poisson
transform for bounded symmetric domains}, J. Funct. Anal. \textbf{236} (2006),
no.~2, 546--580.

\bibitem {KZ}K.~Koufany and G.~Zhang, \emph{Hua operators, Poisson transform
and relative discrete series on line bundles over bounded symmetric domains},
J. Funct. Anal. \textbf{262} (2012), no.~9, 4140--4159.

Internal references follow the preprint
\href{https://arxiv.org/abs/1105.3806v3}{arXiv:1105.3806v3}.

\bibitem {Lassalle}M.~Lassalle,
\emph{Les \'equations de Hua d'un domaine born\'e sym\'etrique du type tube},
Invent. Math. \textbf{77} (1984), no.~1, 129--161.

\bibitem {LiNi}S.-Y.~Li and L.~Ni,
\emph{On the holomorphicity of proper harmonic maps between unit balls with
the Bergman metrics}, Math. Ann. \textbf{316} (2000), no.~2, 333--354.

\bibitem {LiSimon}S.-Y.~Li and E.~Simon,
\emph{Boundary behavior of harmonic functions in metrics of Bergman type on
the polydisc}, Amer. J. Math. \textbf{124} (2002), no.~5, 1045--1057.

\bibitem {LiWei}S.-Y.~Li and D.~Wei,
\emph{On the rigidity theorem for harmonic functions in K\"ahler metric of
Bergman type}, Sci. China Math. \textbf{53} (2010), no.~3, 779--790.

\bibitem {MisraUpmeierBoundary}G.~Misra and H.~Upmeier,
\emph{Toeplitz $C^*$-algebras on boundary orbits of symmetric domains},
in \emph{Operator Algebras, Toeplitz Operators and Related Topics},
Oper. Theory Adv. Appl. \textbf{279}, Springer, 2020, 307--341.

\bibitem {DLMF}NIST Digital Library of Mathematical Functions, Chapter 15,
especially 15.8 and 15.4. \url{https://dlmf.nist.gov/15}.

\bibitem {Shimeno96}N.~Shimeno,
\emph{Boundary value problems for the Shilov boundary of a bounded symmetric
domain of tube type}, J. Funct. Anal. \textbf{140} (1996), no.~1, 124--141.

\bibitem {Shimeno}N.~Shimeno,
\emph{Boundary value problems for various boundaries of Hermitian symmetric
spaces}, J. Funct. Anal. \textbf{170} (2000), no.~2, 265--285.

\bibitem {UpmeierKinv}H.~Upmeier, \emph{$K$-invariant Hilbert modules and
singular vector bundles on bounded symmetric domains}, J. Reine Angew. Math.
\textbf{799} (2023), 155--187.

\bibitem {UpmeierToeplitz21}H.~Upmeier,
\emph{Eigenvalues of $K$-invariant Toeplitz operators on bounded symmetric
domains}, Integral Equations Operator Theory \textbf{93} (2021), article~27.

\bibitem {WYZR}
A.~Wang, W.~Yin, L.~Zhang, and G.~Roos,
\emph{The K\"ahler--Einstein metric for some Hartogs domains over symmetric domains},
Sci. China Ser. A \textbf{49} (2006), no.~9, 1175--1210.

\bibitem {Xiao}M.~Xiao,
\emph{Bergman-harmonic functions on classical domains},
Int. Math. Res. Not. IMRN \textbf{2021} (2021), no.~22, 17220--17255.

\bibitem {ZhangBerezin00}G.~Zhang,
\emph{Berezin transform on line bundles over bounded symmetric domains},
J. Lie Theory \textbf{10} (2000), no.~1, 111--126.

\bibitem {ZhangRec95}G.~Zhang, \emph{Some recurrence formulas for spherical
polynomials on tube domains}, Trans. Amer. Math. Soc. \textbf{347} (1995),
no.~5, 1725--1734.

\bibitem {QZhang17}Q.~Zhang, \emph{On the rigidity and boundary regularity for
Bakry--Emery--Kohn harmonic functions in Bergman metric on the unit ball in
$\mathbb{C}^{n}$}, Proc. Amer. Math. Soc. \textbf{145} (2017), no.~7,
2971--2979.
\end{thebibliography}
\end{document}